\newtheorem{thm}{Theorem}[section]
\newtheorem{lem}[thm]{Lemma}
\newtheorem{sublem}[thm]{Sublemma}
\newtheorem{prop}[thm]{Proposition}
\newtheorem{corollary}[thm]{Corollary}
\newtheorem{question}[thm]{Question}
\newtheorem{defn}[thm]{Definition}
\theoremstyle{remark}
\newtheorem{remark}[thm]{Remark}
\newtheorem{rmks}[thm]{Remarks}
\newcommand{\A}{\mathcal{A}}
\newcommand{\B}{\mathcal{B}}
\newcommand{\C}{\mathcal{C}}
\newcommand{\D}{\mathcal{D}}
\newcommand{\E}{\mathcal{E}}
\newcommand{\N}{\mathcal{N}}
\renewcommand{\S}{\mathcal{S}}
\newcommand{\U}{\mathcal{U}}
\newcommand{\V}{\mathcal{V}}
\newcommand{\X}{\mathcal{X}}
\newcommand{\MLR}{\mathrm{MLR}}
\newcommand{\COMP}{\mathrm{COMP}}
\newcommand{\IM}{\mathrm{IM}}
\newcommand{\BI}{\mathrm{BI}}
\newcommand{\HI}{\mathrm{HI}}
\newcommand{\BHI}{\mathrm{BHI}}
\newcommand{\SR}{\mathrm{SR}}
\newcommand{\KR}{\mathrm{KR}}
\newcommand{\dom}{\mathrm{dom}}
\newcommand{\LV}{\mathrm{LV}}
\newcommand{\cs}{2^\omega}
\newcommand{\uh}{{\upharpoonright}}
\renewcommand{\phi}{\varphi}
\newcommand{\str}{2^{<\omega}}
\newcommand{\binrat}{\mathbb{Q}_2}
\newcommand{\halts}{{\downarrow}}
\newcommand{\task}{\mathfrak{t}} 
\newcommand{\ct}{\mathfrak{c}}
\newcommand{\llb}{\llbracket}
\newcommand{\rrb}{\rrbracket}
\newcommand{\lla}{\langle}
\newcommand{\rra}{\rangle}
\def\supp#1{\mathrm{Supp}(#1)}
\newcommand{\dspec}{\mathrm{Spec}}
\def\T{\mathrm{T}}
\newcommand{\emptystr}{\varepsilon}
\def\fan{\mathrm{Fan}}
\def\qt#1{``#1''}
\title{Degrees of Randomized Computability}
\author{Rupert H\"olzl}
\address{Institut~1,
	Fakultät für Informatik,
	Universität der Bundeswehr München,
	Werner-Heisenberg-Weg~39,
	85579~Neubiberg,
	Germany}
\email{r@hoelzl.fr}
\urladdr{http://hoelzl.fr}
\author{Christopher P. Porter}
\address{Department of Mathematics and Computer Science,
	Drake University,
	Des Moines, IA 50311, 
	USA}
\email{cp@cpporter.com}
\urladdr{http://cpporter.com}
\begin{document}

\maketitle

\begin{abstract}
In this survey we discuss work of Levin and V'yugin on collections of sequences that are non-negligible in the sense that they can be computed by a probabilistic algorithm with positive probability. More precisely, Levin and V'yugin introduced an ordering on collections of sequences that are closed under Turing equivalence.  Roughly speaking, given two such collections $\mathcal{A}$ and $\mathcal{B}$, $\mathcal{A}$~is below $\mathcal{B}$ in this ordering if $\mathcal{A}\setminus\mathcal{B}$ is negligible. The degree structure associated with this ordering, the \textit{Levin-V'yugin degrees} (or \textit{$\LV$-degrees}), can be shown to be a Boolean algebra, and in fact a measure algebra.
	
We demonstrate the interactions of this work with recent results in computability theory and algorithmic randomness: First, we recall the definition of the Levin-V'yugin algebra and identify connections between its properties and classical properties from computability theory.  In particular,  we apply results on the interactions between notions of randomness and Turing reducibility to establish new facts about specific LV-degrees, such as the LV-degree of the collection of 1-generic sequences, that of the collection of sequences of hyperimmune degree, and those collections corresponding to  various notions of effective randomness. Next, we provide a detailed explanation of a complex technique developed by V'yugin that allows the construction of semi-measures into which computability-theoretic properties can be encoded. We provide two examples of the use of this technique by explicating a result of V'yugin's about the LV-degree of the collection of Martin-L\"of random sequences and extending the result to the LV-degree of the collection of sequences of DNC~degree.
\end{abstract}

\tableofcontents

\section{Introduction}

The tools of algorithmic randomness have been particularly useful in studying the power of random oracles in the context of Turing reducibility.  It is well-known that access to a random oracle does not aid in the computation of any \emph{individual} sequence, as Sacks~\cite{Sac63} proved that any sequence that is computable from positive measure many oracles must be computable. However, if instead we attempt to compute some element of a collection of sequences by means of a random oracle, the situation is quite different.

For instance, in unpublished work, Martin proved that the collection of sequences of hyperimmune degree has Lebesgue measure $1$ (see Downey and Hirschfeldt~\cite[Theorem~8.21.1]{DowHir10}). A~careful examination of this proof yields, for any $\delta\in(0,1)$, an algorithm which with probability at least $1-\delta$ computes from a random oracle a function not dominated by any computable function (as~noted by G\'acs  and reported by Rumyantsev and Shen~\cite{RumShe13}).  Other types of sequences known to be computable from positive measure many sequences are the 1-generic sequences (as~shown by Kurtz~\cite{Kur81} and  Kautz~\cite{Kau91}), the sequences of DNC degree (first established by Ku\v cera~\cite{Kuc85}), and sequences satisfying certain algebraic properties in the upper semi-lattice of the Turing degrees under Turing reducibility (studied by Barmpalias, Day, and Lewis-Pye \cite{BarDayLew14}).

Collections of sequences $\C\subseteq\cs$ with the property that only  measure $0$ many sequences compute an element of $\C$ have been referred to as \emph{negligible} (for instance, in V'yugin \cite{Vyu82} and Levin \cite{Lev84}), and thus those collections $\C$ with the property that positive measure many sequences compute an element of $\C$ are called \emph{non-negligible}.  
The focus of our study here is a Boolean algebra of non-negligible subsets of $\cs$ that are closed under Turing equivalence and where two such subsets are identified with each other if they differ only by a negligible set. 
This Boolean algebra, first introduced by Levin and V'yugin \cite{LevVyu77} and systematically studied by V'yugin~\cite{Vyu82}, will be referred to as the \emph{Levin-V'yugin algebra}; its elements will be referred to as the \emph{Levin-V'yugin degrees}, or $\LV$\emph{-degrees} for short.

A significant portion of this article is a survey of previously established results about the Levin-V'yugin algebra, but we also establish 
new facts about it as well.  Much of our focus will furthermore be on explicating a technique developed by V'yugin~\cite{Vyu82} for building left-c.e.\ semi-measures, 
which has applications outside of the study of the algebra, such as in the study of probabilistic computation. We first provide a general schematic account of this technique and then use it to establish the following result.
\begin{thm}[V'yugin  \cite{Vyu12}]\label{thm-prob-alg}
For any $\delta\in(0,1)$, there is a probabilistic algorithm that produces with probability at least $1-\delta$ a non-computable sequence that does not compute any Martin-L\"of random sequence.
\end{thm}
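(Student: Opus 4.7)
The strategy is to build a left-c.e.\ semi-measure $\nu$ on $\cs$ with $\nu(\cs)\geq 1-\delta$ concentrated on sequences that are non-computable and compute no Martin-L\"of random sequence. By the standard correspondence between left-c.e.\ semi-measures and probabilistic Turing machines---with divergence of the machine accounting for missing mass of the semi-measure---such a $\nu$ immediately yields the algorithm required by the theorem.

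Fix an effective enumeration $(\Psi_e)_{e\in\omega}$ of all Turing functionals and a universal Martin-L\"of test $(U_n)_{n\in\omega}$ with $U_{n+1}\subseteq U_n$. Distribute the budget by setting $\delta_{e,n}=\delta\cdot 2^{-(e+n+3)}$, so that $\sum_{e,n}\delta_{e,n}\leq\delta/2$, and impose the requirements
\[
R_{e,n}:\;\nu\bigl(\bigl\{X : \Psi_e(X)\in\cs\setminus U_n\bigr\}\bigr)\leq\delta_{e,n},
\]
where $\Psi_e(X)\in\cs$ is implicit shorthand for totality of $\Psi_e$ on $X$. If every $R_{e,n}$ is met, then for each fixed $e$ the series $\sum_n\nu(\{X:\Psi_e(X)\in\cs\setminus U_n\})$ converges, so by Borel-Cantelli, $\nu$-almost every $X$ has $\Psi_e(X)$ either partial or lying in $U_n$ for cofinitely many $n$; by the nesting of the $U_n$ the latter forces $\Psi_e(X)\in\bigcap_n U_n$, i.e., non-random. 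Hence for $\nu$-almost every $X$ no functional $\Psi_e$ produces a Martin-L\"of random sequence from $X$. Non-computability of $\nu$-almost every $X$ comes for free, since the construction starts from Lebesgue measure and only removes or redistributes mass in an absolutely continuous way; $\nu$ is therefore atomless and the countable set of computable sequences has $\nu$-measure zero.

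To enforce $R_{e,n}$ I would apply V'yugin's redirection technique, whose general schematic is developed by the authors in the body of the paper. At each stage $s$, approximations $\Psi_{e,s}$ and $U_{n,s}$ identify a finite collection of ``dangerous'' cylinders $[\sigma]$: oracles $X\in[\sigma]$ for which the current segment of $\Psi_e(X)$ has extended past any prefix so far captured by $U_n$. When the $\nu_s$-mass of the dangerous set threatens to exceed $\delta_{e,n}$, the excess is parked---either the enumeration declines to push further $\nu$-mass into those cylinders, or it diverts that mass into cylinders already safe for $R_{e,n}$. Each requirement maintains a ledger bounded by its budget $\delta_{e,n}$, and the cumulative operation is compatible with the left-c.e.\ enumeration of $\nu$.

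The principal obstacle is coordinating infinitely many requirements on one semi-measure: a redirection by $R_{e,n}$ may deposit mass into cylinders that are themselves dangerous for some $R_{e',n'}$, potentially cascading. V'yugin resolves this with a priority-style bookkeeping in which each requirement acts only on mass not yet protected by higher-priority requirements and refuses to redirect into protected regions; the finite total $\sum_{e,n}\delta_{e,n}\leq\delta/2$ bounds the total mass relocated and precludes unbounded cascades, so $\nu(\cs)\geq 1-\delta/2\geq 1-\delta$. The remaining verification---that the construction yields a genuine left-c.e.\ semi-measure and that the ledger inequalities together with Borel-Cantelli deliver the required almost-sure properties---then follows routinely.
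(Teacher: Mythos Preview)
Your high-level plan---build an atomless left-c.e.\ semi-measure $\nu$ with $\overline\nu(\cs)\geq 1-\delta$ supported on sequences that compute no Martin-L\"of random, then invoke the semi-measure/functional correspondence---matches the paper's. The gap is in the mechanism you propose for enforcing the requirements.

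You fix a universal test $(U_n)_n$ in advance and aim to keep $\nu$-mass out of $\{X:\Psi_e(X)\in\cs\setminus U_n\}$ by diverting it to ``safe'' cylinders. Take $\Psi_e$ to be the identity functional: then a cylinder is safe for $R_{e,n}$ only if it lies inside $U_n$, and your requirements taken over all $n$ force $\overline\nu$ to concentrate on $\bigcap_n U_n$, a Lebesgue-null set. Your claim that the construction ``only removes or redistributes mass in an absolutely continuous way'' is therefore incompatible with your own requirements, and the atomlessness argument via absolute continuity collapses. More fundamentally, at any finite stage you cannot locate cylinders that are simultaneously and permanently safe for all the $R_{e,n}$; you are trying to steer mass toward a $\Pi^0_2$ null set whose structure is not effectively available, and there is no evident destination for the redirection you describe. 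The priority sketch (``refuse to redirect into protected regions'') does not resolve this: for a lower-priority $R_{e,n}$ every cylinder may be either protected or dangerous, leaving nowhere to go but to discard mass outright.

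V'yugin's technique, as the paper actually deploys it, runs in the \emph{opposite} direction: it does not avoid the complement of a fixed test. For each pair $(\Phi_j,s)$ the construction waits along each path until $\Phi_j$ has produced output longer than a threshold depending on the current node and on $s$ (this is the predicate $B$); it then attaches an extra edge concentrating flow onto that very path and simultaneously enumerates the output string into a set $\U_s$. The length threshold guarantees $\lambda(\U_s)\leq 2^{-s}$, so $(\U_s)_s$ is a Martin-L\"of test built \emph{adaptively from the outputs themselves}, and the edge bookkeeping ensures that every $X$ surviving in the support has $\Phi_j(X)\in\U_s$ for every $s$. This adaptive-test idea is what your proposal is missing. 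Atomlessness then comes not from absolute continuity but from the combinatorics of the network: at each level $w(i,q)$ no extra edge crosses, so the semi-measure genuinely halves along every path there (the Continuity Lemma).
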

We will then apply V'yugin's technique to prove the following generalization of Theorem~\ref{thm-prob-alg}.

\begin{thm}\label{thm-prob-alg2}
For any $\delta\in(0,1)$, there is a probabilistic algorithm  that produces  with probability at least~$1-\delta$  a non-computable sequence that is not of DNC degree.
\end{thm}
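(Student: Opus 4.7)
The plan is to follow the structure of V'yugin's construction for Theorem~\ref{thm-prob-alg}, replacing its Martin-L\"of randomness-killing module with a module that kills DNC-ness. The end product will be a left-c.e.\ semi-measure $\nu$ on $\cs$ with $\nu(\cs) \geq 1-\delta$ whose support consists of non-computable sequences that are not of DNC degree. By the standard correspondence between left-c.e.\ semi-measures and probabilistic algorithms, this $\nu$ yields the desired algorithm. Note that since every Martin-L\"of random sequence has DNC degree, any sequence not of DNC degree computes no Martin-L\"of random sequence, so Theorem~\ref{thm-prob-alg2} genuinely strengthens Theorem~\ref{thm-prob-alg}.

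The priority architecture, the mass bookkeeping, and the subroutine enforcing non-computability in the support all transfer unchanged from V'yugin's original construction; only the diagonalization subroutine must be replaced. Assigning each Turing functional index $e$ a mass budget $\delta_e = \delta/2^{e+2}$ with $\sum_e \delta_e < \delta/2$, I impose the requirement
\[
R_e : \; \nu\{X : \Phi_e^X \text{ is DNC}\} \leq \delta_e.
\]
To meet $R_e$, I use the recursion theorem to reserve a computable list of partial-computable-function indices $n_{e,0}, n_{e,1}, \ldots$ whose diagonal values $\phi_{n_{e,k}}(n_{e,k})$ I retain the authority to define during the construction. For each $k$, I monitor the $\nu$-mass of the classes $\{X : \Phi_e^X(n_{e,k}) = v\}$, $v \in \omega$; whenever such a class accumulates mass exceeding a prescribed threshold $\tau_{e,k}$, I commit $\phi_{n_{e,k}}(n_{e,k}) = v$, which forever prevents $\Phi_e^X$ from being DNC at input $n_{e,k}$ for every $X$ in that class. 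Iterating over sufficiently many indices $n_{e,k}$ with a geometric schedule for $\tau_{e,k}$ reduces the residual DNC-mass below $\delta_e$.

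The main obstacle, as in V'yugin's original proof, is a scheduling one: verifying that the new diagonalization module fits cleanly into the priority and mass-budget apparatus so that all requirements are eventually satisfied while the combined mass loss stays below $\delta$. A technical complication specific to the DNC setting, not present in the randomness case, is that the values $\Phi_e^X(n_{e,k})$ may \emph{a priori} distribute over infinitely many natural numbers, so no single commitment need kill a large fraction of the mass at one stroke. The remedy exploits the recursion theorem's unlimited supply of reserved indices combined with a cylinder-refinement argument: by refining cylinders in the current support of $\nu$ until the use of $\Phi_e^X(n_{e,k})$ stabilizes, each refined cylinder yields a single value for $\Phi_e^X(n_{e,k})$, enabling a clean commitment that eliminates that cylinder from the DNC-bad set. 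Assembling these modules within V'yugin's framework and invoking his analysis of the total mass loss completes the construction and hence the proof.
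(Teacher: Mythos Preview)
Your approach---direct diagonalization against DNC via the recursion theorem---is a genuinely different route from the paper's. The paper instead passes through the characterization of DNC degree as \emph{autocomplexity}, equivalently as \emph{$f$-randomness} for some computable $f$ unbounded along the sequence (Kjos-Hanssen--Merkle--Stephan; Higuchi--Hudelson--Simpson--Yokoyama). Since $f$-randomness is defined by tests of the same shape as Martin-L\"of tests, V'yugin's MLR construction transfers with a one-clause change to the predicate~$B$: replace condition~(c), ``$|\Phi_j^\tau|$ is large,'' by ``$\phi_e(\rho)$ is large for some $\rho\preceq\Phi_j^\tau$,'' with tasks now indexed by triples $\lla j,s,e\rra$. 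Every other ingredient---the Stability, Edge-Existence, Continuity, and Measure Lemmas, and the mass bookkeeping via the countdown function~$\ct$---is literally unchanged. That is what the paper's route buys: the DNC argument becomes the MLR argument plus one line and an appeal to known equivalences.

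Your recursion-theorem idea can also be made to fit V'yugin's template, but not quite as you describe it. There is no separate ``non-computability subroutine'' nor per-functional mass budget~$\delta_e$ in V'yugin's construction; non-computability is automatic from the Continuity Lemma (no atoms), and mass loss is controlled globally by~$\ct(n)$, not per requirement. More importantly, the ``monitor $\nu$-mass and commit when a value crosses threshold~$\tau_{e,k}$'' mechanism is foreign to the edge-attachment template and, as you yourself note, runs straight into the spreading-over-infinitely-many-values problem: a single index $n_{e,k}$ can take only one diagonal value, so no geometric schedule of thresholds helps unless commitments are tied to the edges themselves. The clean integration is rather: reserve via the recursion theorem one index $n_\sigma$ for each string~$\sigma$; for task~$j$ let $B(q',\sigma,\tau)$ additionally require $\Phi_j^\tau(n_\sigma)\halts$; and when the edge $(\sigma,\tau)$ is attached, commit $\phi_{n_\sigma}(n_\sigma)=\Phi_j^\tau(n_\sigma)$. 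Then every $X\in\supp{P}$ with $\Phi_j^X$ total contains, by the Edge Existence Lemma, a $j$-edge $(\sigma,\tau)$ with $\sigma\prec\tau\prec X$, whence $\Phi_j^X(n_\sigma)=\phi_{n_\sigma}(n_\sigma)$ and $\Phi_j^X$ is not DNC. Your cylinder-refinement remark gestures toward this, but the threshold formulation obscures it.
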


Theorems \ref{thm-prob-alg} and \ref{thm-prob-alg2} both follow from a result due to Kurtz \cite{Kur81}, namely that for every~$\delta\in(0,1)$, there is a probabilistic algorithm that produces a $1$-generic sequence with probability $1-\delta$.  Since a $1$-generic sequence can compute neither a Martin-L\"of random sequence nor  a sequence of DNC degree, the results follow. However, V'yugin's technique also has implications for the study of $\Pi^0_1$~classes, that is, effectively closed subsets of $\cs$: the probabilistic algorithms whose existence can be shown using V'yugin's technique are in fact Turing functionals on~$\cs$ with a closed range; and since such a functional is effective, its range is even $\Pi^0_1$. Thus, V'yugin's proof of Theorem \ref{thm-prob-alg} establishes the following stronger result.

\begin{corollary}\label{cor1}{\ }
For every $\delta\in(0,1)$, there is a Turing functional $\Phi$ such that 
\begin{itemize}
\item[(i)] $\Phi$ maps no set of positive measure to any single sequence,
\item [(ii)] the domain of $\Phi$ has Lebesgue measure at least $1-\delta$,
\item [(iii)] the range of $\Phi$ is a $\Pi^0_1$ class, and
\item [(iv)] no sequence in the range of $\Phi$ computes a Martin-L\"of random sequence.
\end{itemize}

\end{corollary}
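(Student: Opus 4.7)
The plan is to observe that V'yugin's proof of Theorem~\ref{thm-prob-alg}, once unpacked, already delivers precisely an object of the kind required by the corollary; so the strategy is to extract the Turing functional implicit in that proof and then verify properties (i)--(iv) one by one against it.

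First I would invoke the standard correspondence between left-c.e.\ semi-measures and Turing functionals on~$\cs$: the left-c.e.\ semi-measure~$M$ that V'yugin builds is the output distribution of some Turing functional~$\Phi$, i.e., $\mu(\Phi^{-1}[\sigma]) = M(\sigma)$ for every finite binary string~$\sigma$. With this~$\Phi$ as the candidate, property~(ii) is immediate since the total mass $M(\cs)\ge 1-\delta$ given by Theorem~\ref{thm-prob-alg} equals $\mu(\dom(\Phi))$. Property~(iv) then comes from the fact that V'yugin's construction actually rules out \emph{any} output computing a Martin-L\"of random sequence, not merely a $(1-\delta)$-fraction of outputs, so the conclusion of Theorem~\ref{thm-prob-alg} extends uniformly to the full range.

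For property~(i), I would split on whether the target sequence~$X$ is computable. If $X$ is non-computable, then $\mu(\Phi^{-1}\{X\})>0$ would, by Sacks's theorem, force $X$ to be computable, a contradiction. The remaining case---that no computable~$X$ attracts positive measure---is the atomlessness of~$M$, which I expect to be immediate from the specifics of V'yugin's construction, in which positive-measure slices of the oracle space are repeatedly subdivided so that no single infinite output can absorb positive mass on its own.

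Property~(iii) is the most delicate step and where I expect the main obstacle. A generic Turing functional need not have a $\PiOne$ range. The plan is to trace through V'yugin's construction and verify that the set of finite binary strings not extended by any output of~$\Phi$ can be enumerated effectively; this enumeration exhibits the complement of $\ran(\Phi)$ as a c.e.\ open set, which is exactly what is needed for $\ran(\Phi)$ to be a $\PiOne$ class. The essential ingredient, as flagged in the remark preceding the corollary, is that V'yugin's construction yields an effectively continuous functional with closed range; confirming this feature in detail is the real work behind~(iii), while the rest of the corollary is then a direct reading of Theorem~\ref{thm-prob-alg}.
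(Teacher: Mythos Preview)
Your approach is correct and matches the paper's. Two small sharpenings are worth noting. For~(i), the Sacks detour is unnecessary: atomlessness of the semi-measure (the Continuity Lemma) already gives $\overline{P}(\{X\})=0$ for \emph{every} $X$, computable or not, so there is no case split to make. For~(iii), the paper's concrete observation is that in V'yugin's construction the condition $P(\sigma)=0$ is not merely semi-decidable but outright \emph{decidable}---by the Cut-off Lemma it holds exactly when some normal edge along $\sigma$ has been assigned $q$-value~$0$ by stage~$|\sigma|$---so $\supp{P}=\{X:\forall n\; P(X\uh n)>0\}$ is visibly $\Pi^0_1$, and under the standard Levin--Zvonkin realization of $P$ as $\lambda_\Phi$ this support coincides with the range of~$\Phi$.
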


Similarly, the proof of Theorem \ref{thm-prob-alg2} that we provide here establishes the following result.

\begin{corollary}\label{cor2}{\ }
For every $\delta\in(0,1)$, there is a Turing functional $\Phi$ such that 
\begin{itemize}
\item[(i)] $\Phi$ maps no set of positive measure to any single sequence,
\item [(ii)] the domain of $\Phi$ has Lebesgue measure at least $1-\delta$,
\item [(iii)] the range of $\Phi$ is a $\Pi^0_1$ class, and
\item [(iv)] no sequence in the range of $\Phi$ is of DNC degree.
\end{itemize}
\end{corollary}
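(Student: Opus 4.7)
The plan is to read Corollary~\ref{cor2} as a structural refinement of Theorem~\ref{thm-prob-alg2}: any probabilistic algorithm is, up to formalisation, a partial Turing functional $\Phi$ on $\cs$, so what must be shown is that the particular $\Phi$ built by V'yugin's technique in the proof of Theorem~\ref{thm-prob-alg2} already satisfies all four items (i)--(iv). I would therefore take the functional $\Phi$ produced in that proof as given and verify each property in turn.

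Items (ii) and (iv) can be read off directly from Theorem~\ref{thm-prob-alg2}: the clause ``with probability at least $1-\delta$'' is precisely the statement that $\dom(\Phi)$ has Lebesgue measure at least $1-\delta$, and (iv) is exactly the conclusion that no output of $\Phi$ is of DNC degree. Item (i) is a generic consequence of the theorem together with Sacks' theorem cited in the introduction: were some $Y \in \cs$ to satisfy $\mu(\Phi^{-1}(\{Y\})) > 0$, then $Y$ would be Turing-computable from a positive-measure set of oracles, and hence by Sacks' theorem $Y$ itself would be computable, contradicting the non-computability of the outputs guaranteed by Theorem~\ref{thm-prob-alg2}.

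The main obstacle is (iii), which is not visible in the purely probabilistic formulation. To obtain it, I would open up V'yugin's construction and exhibit a computable pruned tree $T \subseteq \str$ such that $\ran(\Phi) = [T]$. V'yugin's $\Phi$ is presented by a c.e.\ family of axioms $(\sigma,\tau) \in \str \times \str$ committing $\Phi$ to extending its current output by $\tau$ on any oracle extending $\sigma$, and on each $X \in \dom(\Phi)$ the value $\Phi(X)$ is the limit of a single compatible chain of such $\tau$'s. From this monotone, effective description one naturally proposes $T$ to be the set of prefixes of the $\tau$'s appearing in the fired axioms; this gives the easy inclusion $\ran(\Phi) \subseteq [T]$ for free. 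The delicate point that I expect to carry most of the work is the converse inclusion: every infinite branch of $T$ must actually be attained as $\Phi(X)$ for some $X \in \dom(\Phi)$, which should follow from the book-keeping built into V'yugin's construction, namely that no branch is ever pruned without reserving sufficient probability mass for it to be realised by an oracle in the domain.
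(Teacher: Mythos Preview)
Your handling of (ii) and (iv) is fine, but there are problems with (i) and (iii).

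For (i), the Sacks argument does not close. Theorem~\ref{thm-prob-alg2} only promises that the output is non-computable \emph{with probability at least $1-\delta$}; it does not exclude a computable atom of mass $\leq\delta$, so deriving a contradiction from ``$Y$ is computable'' is illegitimate. The paper obtains (i) directly from the Continuity Lemma (Lemma~\ref{lemma_contin1}), which shows that the constructed semi-measure $P$ has no atoms; since $\lambda(\Phi^{-1}(\{Y\}))=\overline P(\{Y\})$ by Proposition~\ref{prop-barchar}, this gives (i) without any appeal to Sacks.

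For (iii), your approach and the paper's diverge more substantially. The tree you propose---prefixes of the $\tau$'s appearing in the enumerated axioms of $\Phi$---is only c.e.\ a priori, and the paths through a merely c.e.\ tree form a $\Pi^0_2$ class, not a $\Pi^0_1$ class. The paper instead exploits a feature of V'yugin's construction that you do not invoke: by the Cut-off Lemma (Lemma~\ref{lem:blocked-flow}), $P(\tau)=0$ holds exactly when some normal-edge value $q(\tau{\uh} i,\tau{\uh}(i{+}1))$ vanishes along the path to $\tau$, and this is determined already at stage $|\tau|$ of the construction. Hence $\{\sigma:P(\sigma)>0\}$ is a \emph{computable} tree, and $\supp P$---its set of infinite paths---is $\Pi^0_1$. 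This decidability of the support tree is the content your axiom-tracing argument is missing. You are right that identifying $\ran\Phi$ with $\supp P$ (rather than merely $\ran\Phi\subseteq\supp P$) deserves a word; the paper is admittedly terse here, but once the tree is computable this is a routine matter of choosing the functional realising $P$ appropriately, not the ``most of the work'' you anticipate.
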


The remainder of this article is structured as follows. In Section \ref{sec-background}, we review the necessary background.   Section \ref{sec-neg-nonneg} introduces the notions of negligibility and non-negligibility and provides a number of examples from classical computability theory and algorithmic randomness.  The Levin-V'yugin degrees, defined in terms of negligibility, are introduced in Section \ref{sec-lv-degrees}.  
The general features of V'yugin's technique for constructing semi-measures are initially laid out in Section \ref{sec-building}, while specific examples of the technique are provided in Section \ref{sec-implementing}. Lastly, in Section~\ref{sec-conclusion} we conclude with a final observation about the connection between V'yugin's technique and $\Pi^0_1$ classes.

\section{Background}\label{sec-background}

\subsection{Some notation}

 We fix the following notation and terminology. We denote  the natural numbers by~$\omega$, and the set of infinite binary sequences, also known as \emph{Cantor space}, by $\cs$. We denote the set of finite binary strings by~$\str$ and the empty string by~$\emptystr$. Let $\binrat$ be the set of non-negative dyadic rationals, that is, rationals of the form $m/2^n$ for $m,n\in\omega$.  
 
 Given $X\in \cs$ and an integer~$n$, $X \uh n$ is the string that consists of the first~$n$ bits of~$X$, and $X(n)$~is the $(n+1)^{\mathrm{st}}$ bit of $X$ (so that $X(0)$ is the first bit of $X$).  If $\sigma$ and $\tau$ are strings, then $\sigma\preceq\tau$ means that $\sigma$ is an initial segment of $\tau$.  Similarly, for $X\in\cs$, $\sigma\prec X$ means that $\sigma$ is an initial segment of~$X$.  
 
 Given a string~$\sigma$, the \emph{cylinder} $\llb\sigma\rrb$ is the collection of elements of $\cs$ having~$\sigma$ as an initial segment.  Similarly, given $S\subseteq\str$, $\llb S\rrb$ is defined to be the collection $\bigcup_{\sigma\in S}\llb\sigma\rrb$. The cylinders form a basis for the usual product topology on Cantor space, and thus the open sets for this topology are those of the form $\llb S \rrb$ for some~$S$. An open set $\mathcal{U}$ is said to be \emph{effectively open}~(or~$\Sigma^0_1$) if $\mathcal{U}=\llb S \rrb$ for some computably enumerable (hereafter, c.e.) set~$S \subseteq \str$. An \emph{effectively closed} (or~$\Pi^0_1$) set is the complement of an effectively open set. A sequence of open sets $(\mathcal{U}_n)_{n \in \omega}$ is said to be \emph{uniformly effectively  open} if there exists a sequence $(S_n)_{n \in \omega}$ of uniformly c.e.\ sets of strings such that $\mathcal{U}_n=\llb S_n \rrb$ for all $n\in\omega$. 

For $\A\subseteq\cs$, we write $(\A)^{\equiv_\T }$ for the closure of $\A$ under Turing equivalence; that is, we let
\[(\A)^{\equiv_\T }:=\{X\in\cs\colon (\exists Y\in\A)\;X\equiv_\T  Y\}.\]

\subsection{Turing functionals and computable measures}  We assume that the reader is familiar with the basics of computability theory (for instance, the material covered in Soare~\cite[Chapters~I\nobreakdash-IV]{Soa16}, Nies~\cite[Chapter~1]{Nie09}, or Downey and Hirschfeldt~\cite[Chapter~2]{DowHir10}).

\begin{defn}\
\begin{itemize}
\item[(i)] A \emph{Turing functional} $\Phi\colon\subseteq\cs\rightarrow\cs$ is represented by a c.e.\ set $S_\Phi$ of pairs of strings $(\sigma,\tau)$ such that 
if $(\sigma,\tau),(\sigma',\tau')\in S_\Phi$ and $\sigma\preceq\sigma'$, then $\tau\preceq\tau'$ or $\tau'\preceq\tau$.  
\item[(ii)] For each $\sigma\in\str$, we define $\Phi^\sigma$ to be the maximal string (in the order given by~$\preceq$) in the set~$\{\tau\colon (\exists \sigma'\preceq\sigma)((\sigma',\tau)\in S_\Phi)\} \cup \{\varepsilon\}$.  
Similarly, for each $s\in\omega$, $\Phi^\sigma_s$ is the maximal string in the set
${\{\tau\colon (\exists \sigma'\preceq\sigma)((\sigma',\tau)\in S_\Phi[s])\} \cup \{\varepsilon\}}$, where $S_\Phi[s]$ is the  approximation of the c.e.~set~$S_\Phi$ at stage~$s$.
\item[(iii)] Let $\Phi^X$ be the minimal (in the order given by $\preceq$) $z\in\str\cup\cs$ such that $\Phi^{X \uh n} \preceq z$ for all~$n$.  
\item[(iv)]  We set $\dom(\Phi)=\{X\in\cs\colon\Phi^X\in\cs\}$.  
\item[(v)] For $\tau \in \str$, let $\Phi^{-1}(\tau)$ be 
$\{ \sigma\in \str \colon \exists \tau' \succeq \tau \colon (\sigma,\tau')\in S_\Phi\}$. 
\item[(vi)] Lastly, for $\A \subseteq \cs$, let $\Phi^{-1}(\A)$ be 
$\{
X \in \cs\colon \Phi^X \in \A
\}$.
\end{itemize}
\end{defn}

When $\Phi^X\in\cs$, we will often write $\Phi^X$ as $\Phi(X)$ to emphasize that we view the functional~$\Phi$ as a (partial) map from~$\cs$ to~$\cs$. 

A measure $\mu$ on $\cs$ is \emph{computable} if $\sigma \mapsto \mu(\llb\sigma\rrb)$ is computable as a real-valued function, that~is, if there is a computable function $\widetilde \mu\colon\str\times\omega\rightarrow\binrat$ such that
\[|\mu(\llb\sigma\rrb)-\widetilde \mu(\sigma,i)|\leq 2^{-i}\]
for every $\sigma\in\str$ and $i\in\omega$. For all measures appearing in this article we assume that $\mu(\cs)\leq 1$ without explicit mention.
From now on, we will write $\mu(\llb\sigma\rrb)$ as $\mu(\sigma)$.  By Carathéodory's Theorem, if the values $\mu(\sigma)$, for $\sigma\in\str$, of a measure $\mu$ on $\cs$ are fixed, then there is a unique extension of~$\mu$ to the Borel $\sigma$-algebra generated by the sets $\llb \sigma \rrb$, for $\sigma\in\str$. In this article, all measures will be defined in this way, which implies in particular that the same sets are measurable for each of these measures.

The \emph{uniform (or Lebesgue) measure}~$\lambda$ is the probability measure for which each bit of the sequence has value~$0$ with probability $1/2$, independently of the values of the other bits. It can be defined as the unique Borel measure such that $\lambda(\sigma)=2^{-|\sigma|}$ for all strings~$\sigma$. Clearly, $\lambda$~is a computable measure.

\subsection{Notions of algorithmic randomness}\label{subsec-notions}

The primary notion of algorithmic randomness that we will consider in this study is Martin-L\"of randomness.
\begin{defn}{\ }
\begin{itemize}
\item[(i)] A \emph{Martin-L\"of test} is a sequence $(\mathcal{U}_i)_{i\in\omega}$ of uniformly effectively open subsets of $\cs$ such that for each $i$,
$
\lambda(\mathcal{U}_i)\leq 2^{-i}
$.

\item[(ii)] $X\in\cs$ \emph{passes} the Martin-L\"of test $(\mathcal{U}_i)_{i\in\omega}$ if $X\notin\bigcap_{i \in \omega}\mathcal{U}_i$.

\item[(iii)] $X\in\cs$ is \emph{Martin-L\"of random}, denoted $X\in\MLR$, if $X$ passes every Martin-L\"of test. 
\end{itemize}
\end{defn}

We will also consider relative versions of Martin-L\"of randomness, obtained by relativizing the above notion of a Martin-L\"of test to some oracle $A\in\cs$; such a class will be written as $\MLR^A$.  For $A=\emptyset^{(n)}$, the resulting notion of randomness is known as $(n+1)$-randomness.  Other randomness notions can be obtained as follows.

\begin{defn} Let $X\in\cs$.
\begin{itemize}
\item[(i)] $X$ is \emph{Schnorr random} if and only if $X$ passes every Martin-L\"of test $(\mathcal{U}_i)_{i\in\omega}$ such that $\lambda(\mathcal{U}_i)$ is computable uniformly in $i\in\omega$.
\item[(i)] $X$ is \emph{Kurtz random} (or \emph{weakly 1-random}) if and only if $X$ is not contained in any $\Pi^0_1$ class of Lebesgue measure~$0$.
\item[(ii)] $X$ is \emph{weakly 2-random} if and only if $X$ is not contained in any $\Pi^0_2$ class of Lebesgue measure~$0$.
\item[(iii)] $X$ is \emph{difference random} if and only if it is Martin-L\"of random and not Turing complete.
\end{itemize}
\end{defn}
Let $\SR$ and $\KR$ denote the collections of Schnorr random and Kurtz random sequences, respectively.

Each of the above notions of tests and randomness can also be formulated for arbitrary computable measures $\mu$ on $\cs$ simply by replacing the Lebesgue measure $\lambda$ in the respective definitions by~$\mu$.  Thus, for instance, for a fixed computable measure $\mu$, a sequence $X$ is $\mu$\nobreakdash-Martin-L\"of random, denoted $X\in\MLR_\mu$, if and only if $X$ is not contained in any $\mu$\nobreakdash-Martin-L\"of test.  Significantly, Martin-L\"of randomness with respect to some computable measure is Turing invariant in the following sense.

\begin{thm}[Levin, Zvonkin~\cite{LevZvo70}; Kautz~\cite{Kau91}]\label{thm-levin-kautz}
For every computable measure $\mu$ and for every non-computable $X\in\MLR_\mu$, there is some $Y\in\MLR$ such that $X\equiv_\T Y$.
\end{thm}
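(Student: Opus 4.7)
My plan is to construct a single Turing functional $\Phi$, depending only on $\mu$, such that $\Phi$ maps every non-computable $X\in\MLR_\mu$ to a Martin-L\"of random sequence Turing equivalent to $X$. The key device is the cumulative distribution function of $\mu$: set
\[
F(X)\;:=\;\mu\bigl(\{Y\in\cs : Y<_{\mathrm{lex}}X\}\bigr),
\]
and let $\Phi(X)$ be the binary expansion of $F(X)$. Intuitively, $F$ sends $\mu$ to Lebesgue measure on $[0,1]$, so the induced map $\Phi$ should send $\mu$ to $\lambda$ on Cantor space; hence a standard randomness-preservation argument for measure-preserving Turing functionals will yield $\Phi(X)\in\MLR$ whenever $X\in\MLR_\mu$ and $\Phi(X)$ is defined. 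The burden of the proof is therefore to make $\Phi$ into an actual Turing functional on every non-computable $X\in\MLR_\mu$ and to show that $X$ can be recovered computably from $\Phi(X)$.

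For well-definedness, note that since $\mu$ is computable, the partial sums $F_\sigma := \sum_{|\tau|=|\sigma|,\,\tau<_{\mathrm{lex}}\sigma}\mu(\tau)$ are uniformly computable, and $F_\sigma \leq F(X) \leq F_\sigma+\mu(\sigma)$ for every $\sigma\prec X$. Since atoms of a computable measure are themselves computable, the assumption that $X$ is non-computable forces $\mu(X\uh n)\to 0$, so $F(X)$ is computable from $X$ as a real. To produce a unique binary expansion, $F(X)$ must avoid dyadic rationals, which I would establish as follows: for any $r\in[0,1]$, the preimage $F^{-1}(r)$ is a closed lexicographic interval $[L_r,R_r]$ in $\cs$. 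If this interval is a singleton, then $X=L_r=R_r$ is computable from $r$ and $\mu$ by bisection, which forces $X$ to be computable whenever $r$ is dyadic. If instead the interval has non-empty interior, any $X$ in that interior is contained in a cylinder on which $F$ is constant and hence $\mu$ vanishes, contradicting $\mu$-randomness. Thus $F(X)$ is non-dyadic, and $\Phi(X)$ is implemented as a Turing functional by enumerating a pair $(\sigma,\tau)$ into $S_\Phi$ as soon as the computation based on $\sigma\prec X$ and approximations to $\mu$ witnesses that $F(X)$ lies strictly inside the dyadic interval coded by $\tau$.

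For the reverse inequality $X\leq_\T\Phi(X)$, I would invert $F$ bit by bit: having determined $X\uh n=\sigma$, decide $X(n)$ by comparing the approximation of $F(X)$ recovered from $\Phi(X)$ to the computable threshold $F_\sigma+\mu(\sigma 0)$. The same boundary analysis shows $F(X)\neq F_\sigma+\mu(\sigma 0)$ for every $\sigma\prec X$: equality would place $X$ on the boundary between two cylinders in a way that forces either computability of $X$ or membership of $X$ in a $\mu$-null cylinder. Hence the comparison terminates in finite time at every stage, and $X$ is computed uniformly from $\Phi(X)$. The main obstacle is precisely this boundary analysis, which must rule out the countable family of \emph{critical} reals -- dyadic rationals together with the thresholds of the form $F_\sigma+\mu(\sigma 0)$ -- at which either the binary expansion of $F(X)$ becomes ambiguous or the inversion procedure stalls. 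Every such critical value has a level set under $F$ that is either a single computable point or a lex interval whose interior consists of cylinders of $\mu$-measure zero, and both possibilities are excluded by the hypothesis that $X$ is a non-computable element of $\MLR_\mu$.
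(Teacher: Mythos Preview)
The paper does not prove this theorem; it is stated with attribution to Levin--Zvonkin and Kautz and then used as a black box. Your approach via the cumulative distribution function $F(X)=\mu(\{Y:Y<_{\mathrm{lex}}X\})$ is exactly the classical argument those references give, and your sketch is essentially correct.

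A few remarks on the details. For the randomness-preservation step you invoke as ``standard'', what is actually needed is not that $\Phi$ pushes $\mu$ forward to $\lambda$ exactly (which can fail if $\mu$ has atoms, since $F_*\mu$ then has atoms too), but only that $\mu(\llb\Phi^{-1}(\tau)\rrb)\le 2^{-|\tau|}$ uniformly; this holds because $\llb\Phi^{-1}(\tau)\rrb$ is contained in a lexicographic interval whose $\mu$-measure is at most the length $2^{-|\tau|}$ of the corresponding dyadic interval. Pulling back a $\lambda$-test along $\Phi$ then yields a $\mu$-test, which is the argument you want. Your boundary analysis correctly identifies the two obstructions (computable level sets versus $\mu$-null interiors) and disposes of both using the hypotheses; this is the heart of Kautz's treatment.
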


The requirement that $X$ be non-computable is necessary since every computable sequence~$X$ is random with respect to some computable measures on $\cs$, for example the measure $\delta_X$ defined for $\mathcal{A} \subseteq 2^\omega$ via 
\[{\delta_X(\mathcal{A})=\begin{cases}1 & \text{if }X \in \mathcal{A}, \\ 0 & \text{else.} \end{cases}}\]

\section{Negligibility and Non-negligibility}\label{sec-neg-nonneg}

To define the notions of negligibility and non-negligibility, we need to review the definition of left-c.e.\ semi-measures, which were initially introduced by Solomonoff~\cite{Sol64a,Sol64b} and first systematically studied by Levin and Zvonkin \cite{LevZvo70}.

\subsection{Left-c.e.\ semi-measures}
\begin{defn}
A \emph{semi-measure} is a function $P\colon\str\rightarrow[0,1]$ that satisfies
\begin{itemize}
\item[(i)] $P(\varepsilon)\leq 1$,
\item[(ii)] $P(\sigma)\geq P(\sigma0)+P(\sigma1)$ for every $\sigma\in\str$.
\end{itemize}
In addition, $P$ is left-c.e.\ if $P(\sigma)$ is the limit of a computable, non-decreasing sequence of rationals, uniformly in $\sigma\in\str$.
\end{defn}

Functions satisfying conditions (i) and (ii) above are sometimes referred to in the algorithmic randomness literature as \emph{continuous semi-measures} to distinguish them from discrete semi-measures.  As we do
not consider discrete semi-measures in this study, we will not make this distinction below.  

In Section \ref{sec-implementing}, the support of a semi-measure will play an important role.

\begin{defn}
The \emph{support} of a semi-measure $P$, denoted $\supp{P}$ is the collection of sequences 
\[
\{X\in\cs\colon\forall n\; P(X\uh n)>0\}.
\]
\end{defn}

It is not immediately clear how to extend semi-measures to Borel subsets of $\cs$.   Levin and V'yugin~\cite{LevVyu77} proposed the following way of deriving  measures from left-c.e.\ semi-measures.

\begin{defn}
Given a left-c.e.\ semi-measure $P$ and $\sigma\in\str$ we define
\[
\overline{P}(\sigma)=\inf_n\sum_{\sigma\preceq\tau \;\wedge\; |\tau|=n}P(\tau).
\]
\end{defn}
$\overline{P}$ can be extended to a measure  on $2^\omega$, which we will also write as $\overline P$, by letting $\overline{P}(\llb \sigma \rrb) = \overline{P}(\sigma)$ and then applying  Carathéodory's theorem. 
One can show inductively that $\overline P$~is the maximal measure such that $\overline P(\sigma)\leq P(\sigma)$ for every $\sigma\in\str$ (see, for instance, Bienvenu~et~al.~\cite[Proposition 6.5]{BieHolPor14}). As a consequence, $\overline{P}$~is typically not a probability measure.

Inversely, given any computable measure~$\mu$ defined on $\cs$, we can identify it with the left-c.e.\ semi-measure~${\sigma \mapsto \mu(\llbracket \sigma \rrbracket)}$ defined on $\str$; then we have~$\overline\mu=\mu$.

\bigskip

An important property of left-c.e.\ continuous semi-measures is the following.

\begin{thm}[Levin, Zvonkin \cite{LevZvo70}]\label{thm-InduceSemiMeasures}{\ }
\begin{itemize}
\item[(i)] For every Turing functional $\Phi$, the function $\lambda_\Phi$ defined for every $\sigma \in \str$ via
\[
\lambda_\Phi(\sigma)=\lambda(\llb\Phi^{-1}(\sigma)\rrb)=\lambda(\{X\in\cs\colon\Phi^X\succeq\sigma\}),
\]
where $\Phi^X\in\cs\cup\str$, is a left-c.e.\ semi-measure.
\item[(ii)] For every left-c.e.\ semi-measure $P$, there is a Turing functional $\Phi$ such that $P=\lambda_\Phi$. 
\end{itemize}
\end{thm}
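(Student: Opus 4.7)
\bigskip

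\noindent\textbf{Proof proposal.}

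For part (i), both properties are essentially unpacking of definitions. The function~$\lambda_\Phi$ satisfies $\lambda_\Phi(\varepsilon)=\lambda(\cs)\leq 1$. For the branching inequality, I observe that a single element of~$\cs\cup\str$ has at most one of $\sigma 0$ and $\sigma 1$ as an initial segment, so the sets $\{X\colon \Phi^X\succeq \sigma 0\}$ and $\{X\colon \Phi^X\succeq \sigma 1\}$ are disjoint subsets of $\{X\colon \Phi^X\succeq \sigma\}$, and $\sigma$-additivity of~$\lambda$ gives $\lambda_\Phi(\sigma 0)+\lambda_\Phi(\sigma 1)\leq\lambda_\Phi(\sigma)$. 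For left-c.e.-ness, the definition of a Turing functional tells us that $\Phi^{-1}(\sigma)$ is c.e.\ uniformly in~$\sigma$, so $\{X\colon\Phi^X\succeq\sigma\}=\llb\Phi^{-1}(\sigma)\rrb$ is uniformly effectively open; the Lebesgue measure of a uniformly effectively open set is left-c.e.\ uniformly, via inclusion-exclusion over a prefix-free enumeration of the underlying c.e.\ set of strings.

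For part (ii), I would give an explicit stage-wise construction of~$S_\Phi$. Fix a computable non-decreasing rational approximation $P_s(\sigma)\nearrow P(\sigma)$ and, by a standard bookkeeping step, replace it with an approximation $\widetilde P_s$ taking values in~$\binrat$ that itself satisfies the semi-measure inequalities $\widetilde P_s(\sigma 0)+\widetilde P_s(\sigma 1)\leq\widetilde P_s(\sigma)$ at every stage and still converges to~$P(\sigma)$; this can be arranged by processing increments from the root outward and slicing each increment into small dyadic pieces.

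I then build $S_\Phi$ while maintaining, at every stage~$s$, the invariant that $\{X\colon \Phi[s]^X\succeq\sigma\}$ is a clopen set of measure exactly~$\widetilde P_s(\sigma)$ for every~$\sigma$, and that these sets are nested consistently with the prefix order and disjoint across incomparable strings. When $\widetilde P_{s+1}(\sigma)>\widetilde P_s(\sigma)$, I need to route an additional clopen set of measure $\widetilde P_{s+1}(\sigma)-\widetilde P_s(\sigma)$ into $\sigma$; the oracles available for this purpose are exactly those in
\[
\{X\colon \Phi[s]^X\succeq\sigma^-\}\setminus\bigl(\{X\colon\Phi[s]^X\succeq\sigma\}\cup\{X\colon\Phi[s]^X\succeq\sigma^{\sim}\}\bigr),
\]
where $\sigma^-$ is the parent and $\sigma^\sim$ the sibling of~$\sigma$. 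The semi-measure inequality for $\widetilde P_s$, combined with the fact that ancestors are updated first, guarantees this clopen set has Lebesgue measure at least the required increment. I then pick a clopen subset of that measure and enumerate the corresponding pairs $(\rho,\sigma)$ into~$S_\Phi$, thereby extending~$\Phi$ consistently.

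The main obstacle is precisely this bookkeeping: the raw left-c.e.\ approximation need not satisfy the semi-measure inequality stage by stage, so naively one might try to increase $\widetilde P_s(\sigma)$ past what the current ancestor value can accommodate. This is where ordering the increments from the root downward, together with subdividing increments into dyadic pieces, becomes essential. Once the construction is seen to preserve the invariant, verifying $\lambda_\Phi=P$ in the limit is immediate: for each~$\sigma$, $\lambda_\Phi(\sigma)=\lim_s\widetilde P_s(\sigma)=P(\sigma)$.
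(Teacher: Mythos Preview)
The paper does not actually prove this theorem; it is stated as a classical result of Levin and Zvonkin with a citation to \cite{LevZvo70} and is used throughout without proof. So there is no in-paper argument to compare against.

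Your proposal is the standard proof and is essentially correct. Part~(i) is routine and your reasoning is fine. For part~(ii), the construction you sketch is the usual one; the point you flag as the ``main obstacle'' (that the raw left-c.e.\ approximation need not satisfy the semi-measure inequality at each stage, so one must process increments from the root outward and quantize to dyadic values) is exactly the crux, and your handling of it is sound. One small remark: rather than first producing a cleaned-up approximation $\widetilde P_s$ and then building $\Phi$ to match it, many textbook presentations merge these two steps and allocate clopen sets to strings directly as mass becomes available at the parent; this avoids having to separately verify that a stage-by-stage semi-measure approximation exists. Either route works.
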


Using Theorem \ref{thm-InduceSemiMeasures} one can derive an alternative characterization of $\overline{P}$ for any left-c.e.\ semi-measure $P$.

\begin{prop}\label{prop-barchar}
Let $P$ be a left-c.e.\ semi-measure.  Then
\[
\overline P(\sigma)=\lambda(\{X\in\cs\colon\Phi^X\in\cs\;\wedge\; \Phi^X\succeq\sigma\}),
\]
where $\Phi$ is as in Theorem~\ref{thm-InduceSemiMeasures} (ii).
Moreover, for measurable~$\A \subseteq \cs$, Carathéodory's theorem implies that \[\overline P(\A)=\lambda(\Phi^{-1}(\A)).\]
\end{prop}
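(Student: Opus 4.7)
The plan is to unfold the definition of $\overline P(\sigma)$ as an infimum over levels $n$, substitute the identity $P(\tau)=\lambda(\{X : \Phi^X \succeq \tau\})$ supplied by Theorem~\ref{thm-InduceSemiMeasures}(ii), rewrite each level as the measure of a single set using pairwise disjointness across $\tau$ of the same length, and finally pass to the limit in $n$ using continuity of $\lambda$ from above.

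\medskip

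\noindent\textbf{Step 1: Translate each level into a measure.} Fix $\sigma\in\str$. For any $n\geq |\sigma|$, consider the sum
\[
\sum_{\sigma\preceq\tau,\;|\tau|=n} P(\tau)\;=\;\sum_{\sigma\preceq\tau,\;|\tau|=n}\lambda\bigl(\{X\in\cs\colon \Phi^X\succeq\tau\}\bigr).
\]
Because any two distinct strings $\tau_1,\tau_2$ of length $n$ are incomparable under $\preceq$, the sets $\{X : \Phi^X\succeq\tau_i\}$ are disjoint (any common $X$ would force $\tau_1$ and $\tau_2$ to be prefixes of the same $\Phi^X$ of equal length). Hence the sum equals the Lebesgue measure of the union, which is exactly
\[
A_n \;:=\; \{X\in\cs\colon \Phi^X\succeq\sigma\text{ and } |\Phi^X|\geq n\}.
\]

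\medskip

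\noindent\textbf{Step 2: Pass to the limit.} The sets $A_n$ form a descending chain in $n$, and each has measure at most $1$, so continuity of $\lambda$ from above applies. Their intersection is precisely the set of $X$ such that $\Phi^X\succeq\sigma$ with $\Phi^X$ of infinite length, i.e.\ $\{X : \Phi^X\in\cs \wedge \Phi^X\succeq\sigma\}$. Therefore
\[
\overline P(\sigma) \;=\; \inf_n \lambda(A_n) \;=\; \lambda\bigl(\{X\in\cs\colon \Phi^X\in\cs\;\wedge\; \Phi^X\succeq\sigma\}\bigr),
\]
which is the first identity claimed. Note that the right-hand side is, by the definition of $\Phi^{-1}$ on subsets of $\cs$, exactly $\lambda(\Phi^{-1}(\llb\sigma\rrb))$.

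\medskip

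\noindent\textbf{Step 3: Carathéodory extension.} Define $\nu(\A) := \lambda(\Phi^{-1}(\A))$ for Borel $\A\subseteq\cs$. Since $\Phi$, viewed as a partial map $\cs\to\cs$, is Borel-measurable on its $G_\delta$ domain, $\nu$ is a well-defined Borel measure (it is the pushforward of $\lambda\!\uh\!\dom(\Phi)$ under $\Phi$; countable additivity follows from countable additivity of $\lambda$ together with the fact that $\Phi^{-1}$ commutes with disjoint unions). By Step~2, $\nu$ and $\overline P$ agree on every cylinder $\llb\sigma\rrb$, so by the uniqueness clause of Carathéodory's theorem, they agree on the full Borel $\sigma$-algebra, yielding the ``moreover'' statement $\overline P(\A)=\lambda(\Phi^{-1}(\A))$.

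\medskip

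There is no substantial obstacle: the proof is essentially a bookkeeping exercise that connects the combinatorial definition of $\overline P$ with the probabilistic interpretation of $\Phi$. The only point requiring some care is the edge case in Step~1, namely that finite outputs $\Phi^X$ of length $\geq n$ are legitimately counted in each $A_n$ but vanish in the intersection as $n\to\infty$; this is exactly what produces the restriction $\Phi^X\in\cs$ in the final expression.
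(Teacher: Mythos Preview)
Your proof is correct and is precisely the standard argument one would expect here. The paper itself does not give a proof of this proposition but instead refers the reader to Bienvenu et al.~\cite[Proposition 6.5]{BieHolPor14} for the first identity and invokes Carath\'eodory's theorem for the second, so your write-up supplies exactly the details the paper omits.
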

\noindent For a proof of the first part of the proposition, see Bienvenu et al.~\cite[Proposition 6.5]{BieHolPor14}.

\begin{thm}[Levin, Zvonkin \cite{LevZvo70}]\label{univ_semi_measure_sdfsdfer}
There is a universal left-c.e.\ semi-measure, that is, a left-c.e.\ semi-measure $M$ such that for every left-c.e.\ semi-measure $P$, there is some constant~$c$ such that
\[
P(\sigma)\leq c\cdot M(\sigma)
\]
for every $\sigma\in\str$.  
\end{thm}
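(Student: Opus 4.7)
The plan is to construct $M$ in the form $\lambda_\Phi$ for a suitably chosen Turing functional $\Phi$, leveraging Theorem~\ref{thm-InduceSemiMeasures}(ii), which asserts that every left-c.e.\ semi-measure is induced by some Turing functional. Thus it suffices to build a single $\Phi$ that subsumes, up to a multiplicative factor, every other Turing functional, and then to take $M := \lambda_\Phi$; the resulting $M$ will automatically be a left-c.e.\ semi-measure by Theorem~\ref{thm-InduceSemiMeasures}(i).

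The first step is to fix an effective enumeration $(\Phi_e)_{e \in \omega}$ of all Turing functionals. Enumerating all c.e.\ sets of pairs in $\str \times \str$ is immediate, but the consistency condition in the definition of $S_\Phi$ may fail on an arbitrary c.e.\ set. One repairs each enumerated set on the fly by discarding any newly listed pair $(\sigma, \tau)$ whose addition would yield two pairs $(\sigma', \tau'), (\sigma'', \tau'') \in S_\Phi$ with $\sigma' \preceq \sigma''$ but $\tau', \tau''$ incomparable under $\preceq$. This produces a uniformly c.e.\ sequence of valid Turing functionals, and since any c.e.\ set already satisfying the consistency condition loses nothing during the repair, every Turing functional does appear in the list as some $\Phi_e$. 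I expect this repair step to be the one place requiring genuine care.

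Next, define $\Phi$ by
\[
S_\Phi = \{(0^e 1 \sigma,\, \tau) : e \in \omega,\ (\sigma, \tau) \in S_{\Phi_e}\},
\]
which is c.e. Consistency of $S_\Phi$ follows from the observation that the prefixes $0^e 1$ for distinct $e$ are pairwise $\preceq$-incomparable, so any two pairs in $S_\Phi$ with comparable first coordinates necessarily originate from the same $S_{\Phi_e}$, where consistency already holds. Set $M := \lambda_\Phi$. For universality, given an arbitrary left-c.e.\ semi-measure $P$, use Theorem~\ref{thm-InduceSemiMeasures}(ii) to choose $e$ with $P = \lambda_{\Phi_e}$; since $\Phi(0^e 1 Y) = \Phi_e(Y)$ for all $Y \in \cs$, we obtain
\[
M(\sigma) \geq \lambda(\{0^e 1 Y : Y \in \cs,\ \Phi_e^Y \succeq \sigma\}) = 2^{-(e+1)} \lambda_{\Phi_e}(\sigma) = 2^{-(e+1)} P(\sigma),
\]
so $c := 2^{e+1}$ is the constant required by the theorem.
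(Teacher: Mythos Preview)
Your proof is correct and follows essentially the same approach as the paper, which does not give a formal proof of this theorem but sketches exactly this construction in Remark~\ref{rmk1}(i): define a universal functional via $\Phi(1^e0X)=\Phi_e(X)$ and set $M=\lambda_\Phi$. Your use of the prefix $0^e1$ in place of $1^e0$ is an immaterial difference, and you have supplied the details (consistency of $S_\Phi$, the measure computation yielding $c=2^{e+1}$) that the paper leaves implicit.
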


\begin{remark}\label{rmk1}{\ } 
\begin{itemize}
\item[(i)] One way to define a universal semi-measure is via a universal functional.  For instance, for an effective enumeration $(\Phi_e)_{e\in\omega}$ of all Turing functionals, we can define $\Phi\colon\cs\rightarrow\cs$ via $\Phi(1^e0X)=\Phi_e(X)$ for each $e\in\omega$ and $X\in\cs$.  It is not hard to verify that $\lambda_\Phi$ is universal.

\item[(ii)] For every left-c.e.\ semi-measure $P$, there is some $c$ such that
\[
\overline{P}(\sigma)\leq c\cdot \overline{M}(\sigma).
\]
To see this, observe that for the $c$ appearing in Theorem~\ref{univ_semi_measure_sdfsdfer} we have
\[\overline{P}(\sigma) = \inf_n\sum_{\sigma\preceq\tau \;\wedge\; |\tau|=n} P(\tau) \leq \inf_n\sum_{\sigma\preceq\tau \;\wedge\; |\tau|=n} c \cdot M(\tau) = 
c \cdot \overline{M}(\sigma).\]

\item[(iii)] From (ii) and a straightforward argument using open covers of null sets, we can derive the conclusion that for every left-c.e.\ semi-measure $P$, $\overline P$ is absolutely continuous with respect to $\overline M$; that is, if $\overline M(\B)=0$ then $\overline P(\B)=0$ for every measurable set~$\B$.
\end{itemize}
\end{remark}

Using a universal semi-measure we can provide an alternative characterization of $\mu$-Martin-L\"of randomness for each computable measure $\mu$.

\begin{thm}[Levin~\cite{Lev74}; Schnorr, see Chaitin~\cite{Cha75}]\label{thm-levin-schnorr}
Let $\mu$ be a computable measure.  Then $X\in\MLR_\mu$ if and only if there is some $c$ such that $\mu(X\uh n)\geq c\cdot M(X\uh n)$ for every $n$.
\end{thm}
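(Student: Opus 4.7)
The plan is to prove both directions by contraposition, translating between $\mu$-Martin-L\"of tests and left-c.e.\ semi-measures via the universality of $M$ (Theorem~\ref{univ_semi_measure_sdfsdfer}). The underlying picture is that a sequence whose initial segments have $M$ much larger than $\mu$ is precisely a sequence that some left-c.e.\ semi-measure can ``detect'' in the sense of a Martin-L\"of test, and vice versa.

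For the implication from the inequality to $X\in\MLR_\mu$, I proceed by contrapositive. Suppose $X$ fails some $\mu$-Martin-L\"of test $(\V_k)_{k\in\omega}$ and write each $\V_k=\llb T_k\rrb$ for a prefix-free c.e.\ set $T_k$. The function
\[
P_k(\sigma)=\mu(\llb\sigma\rrb\cap\V_k)
\]
is a left-c.e.\ semi-measure (in fact a measure concentrated on $\V_k$), with left-c.e.-ness coming from enumerating $T_k$ from below, and it satisfies $P_k(\varepsilon)\leq 2^{-k}$. After multiplying by a small positive computable constant $\alpha$, the function $P=\alpha\sum_{k\in\omega}2^{k/2}P_k$ is a left-c.e.\ semi-measure, since $\sum_k 2^{k/2}\cdot 2^{-k}$ is a convergent geometric series. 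For each $k$, $X\in\V_k$ gives some $n_k$ with $X\uh n_k\in T_k$, whence $P_k(X\uh n_k)=\mu(X\uh n_k)$ and hence $P(X\uh n_k)\geq \alpha\,2^{k/2}\,\mu(X\uh n_k)$. Universality of $M$ now yields $c'>0$ with $M\geq c'P$ pointwise, so $\mu(X\uh n_k)/M(X\uh n_k)\to 0$, contradicting the assumed uniform lower bound.

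For the converse, assume no such $c$ exists, so that for every $k$ there is some $n$ with $M(X\uh n)>2^k\mu(X\uh n)$. Since $M$ is left-c.e.\ and $\mu$ is computable, the sets
\[
S_k=\{\sigma\in\str\colon M(\sigma)>2^k\mu(\sigma)\}
\]
are uniformly c.e., so the $\U_k:=\llb S_k\rrb$ are uniformly effectively open. Letting $S_k'\subseteq S_k$ be the prefix-free collection of $\preceq$-minimal elements, an easy induction from the semi-measure inequality yields $\sum_{\sigma\in S_k'}M(\sigma)\leq M(\varepsilon)\leq 1$, and hence
\[
\mu(\U_k)=\sum_{\sigma\in S_k'}\mu(\sigma)<2^{-k}\sum_{\sigma\in S_k'}M(\sigma)\leq 2^{-k}.
\]
Thus $(\U_k)_{k\in\omega}$ is a $\mu$-Martin-L\"of test; by hypothesis $X$ lies in every $\U_k$, so $X\notin\MLR_\mu$.

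The step that most deserves care is verifying that the combined $P$ is a genuine left-c.e.\ semi-measure: the semi-measure inequality follows from additivity of $\mu$, left-c.e.-ness from the c.e.\ approximation of $T_k$, and one need only rescale by a small computable constant to ensure $P(\varepsilon)\leq 1$. The rest is routine bookkeeping; in particular, $S_k'$ itself need not be c.e., as it enters only through the measure bound on $\mu(\U_k)$, while the uniform effective-openness of $\U_k$ is witnessed by the c.e.\ set $S_k$ itself.
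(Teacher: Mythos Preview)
The paper does not supply a proof of this theorem; it is quoted as a classical result of Levin and Schnorr with citations only. Your argument is correct and follows the standard textbook proof (as in Downey--Hirschfeldt or Nies): one direction packages a failed $\mu$-Martin-L\"of test into a single left-c.e.\ semi-measure that dominates $\mu$ along $X$ and invokes universality of $M$; the other direction uses the c.e.\ sets $\{\sigma\colon M(\sigma)>2^k\mu(\sigma)\}$ together with the Kraft-type bound $\sum_{\sigma\in S_k'}M(\sigma)\leq M(\varepsilon)\leq 1$ for prefix-free $S_k'$ to build a $\mu$-test. The minor points you flag (rescaling $P$ to have total mass at most $1$, and that $S_k'$ need not be c.e.\ since only $S_k$ is used to witness effective openness) are handled correctly.
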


We can now define the notion of negligibility.

\begin{defn}
We say that $\B\subseteq\cs$ is \emph{negligible} if $\overline M(\B)=0$.
\end{defn}

As a consequence of Remark \ref{rmk1}~(iii) we obtain the following corollary.

\begin{corollary}\label{dfgsdafkdfjhsdsdfg}
Let $P$ be a left-c.e.\ semi-measure and $\B\subseteq\cs$ a negligible collection of sequences.  Then $\overline P(\B)=0$.   In particular, $\mu(\B)=0$ for every computable measure $\mu$.
\end{corollary}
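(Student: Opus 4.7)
The plan is to read the corollary as an immediate consolidation of two facts already laid out in the excerpt, so the whole proof reduces to quoting them in sequence; I do not anticipate a substantive obstacle. The first statement is simply Remark~\ref{rmk1}(iii) applied to a negligible set, and the ``in particular'' clause follows by identifying any computable measure with a left-c.e.\ semi-measure.

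For the first statement, I would start with an arbitrary left-c.e.\ semi-measure $P$. By Remark~\ref{rmk1}(iii), $\overline{P}$ is absolutely continuous with respect to $\overline{M}$: any measurable set $\mathcal{C}$ with $\overline{M}(\mathcal{C})=0$ also has $\overline{P}(\mathcal{C})=0$. Since $\mathcal{B}$ is negligible, the definition gives $\overline{M}(\mathcal{B})=0$; thus $\overline{P}(\mathcal{B})=0$. (If one wished to avoid invoking the remark as a black box, one could re-run its one-line argument: from $\overline{P}\le c\cdot\overline{M}$ on cylinders one builds, for any $\varepsilon>0$, an open cover of $\mathcal{B}$ of $\overline{M}$-mass at most $\varepsilon/c$, which is then an open cover of $\overline{P}$-mass at most $\varepsilon$.)

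For the ``in particular'' clause, let $\mu$ be a computable measure on $\cs$. As noted in the excerpt just after the definition of $\overline{P}$, the map $\sigma\mapsto\mu(\llb\sigma\rrb)$ is itself a left-c.e.\ semi-measure (it is in fact a measure, and computability of $\mu$ gives the required left-c.e.\ approximation), and the associated measure $\overline{\mu}$ coincides with $\mu$. Applying the first part of the corollary to this semi-measure yields $\mu(\mathcal{B})=\overline{\mu}(\mathcal{B})=0$, completing the proof.
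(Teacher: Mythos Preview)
Your proof is correct and matches the paper's approach exactly: the paper states the corollary as an immediate consequence of Remark~\ref{rmk1}(iii), and the ``in particular'' clause follows from the identification $\overline{\mu}=\mu$ for computable measures noted just before that remark.
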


Negligibility of a collection can alternatively be characterized by stipulating that {\em no} Turing functional produce an element of that collection with positive probability, as the following proposition shows.

\begin{prop}\label{prop-neg}
Let $(\Phi_i)_{i\in\omega}$ be an effective enumeration of all Turing functionals.  Then a measurable $\B\subseteq\cs$ is negligible if and only if
\[
\lambda\Biggl(\bigcup_{i\in\omega}\Phi_i^{-1}(\B)\Biggr)=0.
\]
\end{prop}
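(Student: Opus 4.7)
The plan is to prove both directions by invoking the characterization $\overline{P}(\A) = \lambda(\Phi^{-1}(\A))$ from Proposition~\ref{prop-barchar}, applied to the semi-measures $\lambda_{\Phi_i}$ and to the universal semi-measure $M$. No heavy machinery beyond what has already been established should be needed.

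For the forward direction, suppose $\B$ is negligible, i.e.\ $\overline{M}(\B)=0$. For each $i$, the function $\lambda_{\Phi_i}$ is a left-c.e.\ semi-measure by Theorem~\ref{thm-InduceSemiMeasures}(i), and by Remark~\ref{rmk1}(iii), $\overline{\lambda_{\Phi_i}}$ is absolutely continuous with respect to $\overline{M}$; hence $\overline{\lambda_{\Phi_i}}(\B)=0$. Proposition~\ref{prop-barchar} then yields $\lambda(\Phi_i^{-1}(\B)) = \overline{\lambda_{\Phi_i}}(\B) = 0$. Countable subadditivity of $\lambda$ gives $\lambda\bigl(\bigcup_{i\in\omega}\Phi_i^{-1}(\B)\bigr)=0$.

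For the backward direction, suppose $\lambda\bigl(\bigcup_{i\in\omega}\Phi_i^{-1}(\B)\bigr)=0$. By Theorem~\ref{thm-InduceSemiMeasures}(ii), the universal semi-measure $M$ is induced by some Turing functional $\Psi$, so that $M = \lambda_\Psi$. Since $(\Phi_i)_{i \in \omega}$ enumerates all Turing functionals, there exists $j$ with $\Phi_j = \Psi$. Applying Proposition~\ref{prop-barchar} to $\Psi$, we get $\overline{M}(\B) = \lambda(\Psi^{-1}(\B)) = \lambda(\Phi_j^{-1}(\B)) \leq \lambda\bigl(\bigcup_{i \in \omega}\Phi_i^{-1}(\B)\bigr) = 0$, so $\B$ is negligible.

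There is no real obstacle in this argument; the only subtlety is that Proposition~\ref{prop-barchar} as stated provides the identity $\overline{P}(\A)=\lambda(\Phi^{-1}(\A))$ for measurable $\A$, which is exactly what bridges the definition of negligibility (phrased on the semi-measure side) with the probabilistic-computation formulation (phrased on the preimage side). The backward direction depends crucially on the fact that $M$~itself is induced by a Turing functional appearing in the enumeration, while the forward direction is essentially a repackaging of the absolute-continuity observation in Remark~\ref{rmk1}(iii).
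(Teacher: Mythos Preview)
Your proof is correct and uses essentially the same ingredients as the paper's proof: Theorem~\ref{thm-InduceSemiMeasures}, Proposition~\ref{prop-barchar}, and Remark~\ref{rmk1}(iii). The only difference is cosmetic: the paper argues both directions by contrapositive, whereas you argue them directly.
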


\begin{proof}
($\Rightarrow$:) Suppose that $\lambda\Bigl(\bigcup_{i\in\omega}\Phi_i^{-1}(\B)\Bigr)>0$.  Then there is some $i$ such that $\lambda(\Phi_i^{-1}(\B))>0$.  Setting $P(\sigma)=\lambda(\llb \Phi_i^{-1}(\sigma) \rrb)$ for $\sigma\in\str$,
it follows from Theorem \ref{thm-InduceSemiMeasures}~(i) that $P$ is a left-c.e.\ semi-measure.  
Moreover, we have $\overline P(\B)=\lambda(\Phi_i^{-1}(\B))$
by Proposition~\ref{prop-barchar} and thus $\overline P(\B)>0$.
By Remark~\ref{rmk1}~(iii), $\overline M(\B)>0$, so $\B$ is not negligible.\\
($\Leftarrow$:)  Let $\Phi$ be a Turing functional such that $M=\lambda_\Phi$, which exists by Theorem \ref{thm-InduceSemiMeasures}~(ii).  If $\B$~is not negligible, then we have $0<\overline M(\B)=\lambda(\Phi^{-1}(\B))$ by Proposition~\ref{prop-barchar}, and hence \[\lambda\Bigl(\bigcup_{i\in\omega}\Phi_i^{-1}(\B)\Bigr)>0.\qedhere \]
\end{proof}

Intuitively, a  collection of sequences is negligible if none of its elements can be obtained with positive probability by any probabilistic algorithm. Indeed, we can see a probabilistic algorithm as consisting of two steps: First we generate infinitely many random bits, then we feed them to some Turing functional to produce the desired output.   
More formally, we can think of a probabilistic algorithm as given by applying a Turing functional $\Phi$ to some random sequence.  In this case, we can probabilistically compute an element of some fixed collection $\B$ with positive probability if there are positive measure many sequences $X$ such that $\Phi(X)\in\B$.  Proposition \ref{prop-neg} tells us that the existence of such a probabilistic algorithm to compute elements of $\B$ with positive probability is equivalent to the non-negligibility of $\B$.

We conclude this subsection with a brief discussion of the atoms of a semi-measure.  

\begin{defn}
Let $P$ be a semi-measure.  $X\in\cs$ is an \emph{atom} of $P$ if there is some $\delta>0$ such that $P(X\uh n)>\delta$ for all $n$.
\end{defn}

\begin{lem}
Let $P$ be a semi-measure. $X\in\cs$ is an atom of $P$ if and only if $\overline{P}(\{X\})>0$.
\end{lem}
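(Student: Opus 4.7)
The plan is to prove the two directions separately, both by unpacking the definition of $\overline{P}$ and using the semi-measure inequality together with continuity of the extended measure.

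For the easier direction $(\Leftarrow)$, I would assume $\overline{P}(\{X\}) > 0$ and set $\delta = \overline{P}(\{X\})/2$. Since $\{X\} \subseteq \llb X \uh n \rrb$, monotonicity of the measure $\overline{P}$ gives $\overline{P}(X \uh n) \geq \overline{P}(\{X\}) > \delta$ for every $n$. Combined with the fact (noted after the definition of $\overline{P}$ in the paper) that $\overline{P}(\sigma) \leq P(\sigma)$ for every $\sigma \in \str$, this yields $P(X \uh n) \geq \overline{P}(X \uh n) > \delta$, which exhibits $X$ as an atom.

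For the direction $(\Rightarrow)$, I would assume there exists $\delta > 0$ with $P(X \uh n) > \delta$ for all $n$, and aim to show $\overline{P}(X \uh n) \geq \delta$ for every $n$. The key observation is that for any $m \geq n$, the string $X \uh m$ appears as one of the extensions summed in the definition of $\overline{P}(X \uh n)$; since every term in that sum is nonnegative, we get
\[
\sum_{X \uh n \preceq \tau \,\wedge\, |\tau| = m} P(\tau) \;\geq\; P(X \uh m) \;>\; \delta.
\]
Taking the infimum over $m$ (the infimum is really over $m \geq |X \uh n|$, since for smaller $m$ the indexing set is empty) gives $\overline{P}(X \uh n) \geq \delta$. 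Now I invoke the fact that $\overline{P}$ extends to a (finite) Borel measure on $\cs$, so continuity from above applied to the nested sequence $\llb X \uh 0 \rrb \supseteq \llb X \uh 1 \rrb \supseteq \cdots$ with intersection $\{X\}$ yields
\[
\overline{P}(\{X\}) \;=\; \lim_{n \to \infty} \overline{P}(\llb X \uh n \rrb) \;\geq\; \delta \;>\; 0.
\]

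There is no serious obstacle here; the entire argument is a direct bookkeeping exercise once one sees that $P(X \uh m)$ itself sits inside the defining sum for $\overline{P}(X \uh n)$ whenever $m \geq n$. The only pitfall to avoid is interpreting the infimum in the definition of $\overline{P}(\sigma)$ over all $n$ including $n < |\sigma|$, where the sum would be vacuously zero; implicitly the infimum is over $n \geq |\sigma|$, and under the semi-measure inequality the sums are non-increasing in $n$, so the infimum coincides with the limit.
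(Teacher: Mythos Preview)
Your proof is correct and follows essentially the same approach as the paper: both directions hinge on the observation that $P(X\uh m)$ is one term in the sum defining $\overline{P}(X\uh n)$ for $m\geq n$, together with the inequality $\overline{P}\leq P$. You supply slightly more detail than the paper does---explicitly invoking monotonicity of the measure for $(\Leftarrow)$ and continuity from above to pass from $\overline{P}(X\uh n)\geq\delta$ to $\overline{P}(\{X\})>0$ in $(\Rightarrow)$---but the argument is the same.
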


\begin{proof}
($\Rightarrow$:)  If there is some $\delta>0$ such that $P(X\uh n)>\delta$ for all $n$, then for each $n$ and each~${m\geq n}$,
\[
\sum_{X\uh n\preceq\tau \;\wedge\; |\tau|=m}P(\tau)\geq P(X\uh m)>\delta.
\]
It follows from the definition of $\overline{P}$ that $\overline{P}(X\uh n)>\delta$ for all $n$.

\noindent ($\Leftarrow$:) ${\overline{P}(\{X\})>0}$ implies that there is an $\delta>0$ such that  $\overline{P}(X\uh n)>\delta$ for all $n$. Then, for all~$n$,
\[P(X\uh n)\geq \overline{P}(X\uh n) >    \delta.\qedhere\]
\end{proof}

\begin{prop}[Bienvenu et al.~\cite{BieHolPor14}]\label{prop_atoms_are_comp} 
Let $P$ be a left-c.e.\ semi-measure. If $X$ is an atom of~$P$, then $X$ is computable.
\end{prop}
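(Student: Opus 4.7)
The plan is to combine the preceding lemma with the representation of left-c.e.\ semi-measures by Turing functionals, and then invoke Sacks' theorem on positive-measure computation that was already cited in the introduction of the paper.

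First, the preceding lemma applied to the atom $X$ of $P$ yields $\overline{P}(\{X\}) > 0$. Next, by Theorem~\ref{thm-InduceSemiMeasures}~(ii) there is a Turing functional $\Phi$ with $P = \lambda_\Phi$, and Proposition~\ref{prop-barchar}, applied to the measurable set $\{X\}$, gives
\[
\overline{P}(\{X\}) \;=\; \lambda(\Phi^{-1}(\{X\})) \;=\; \lambda(\{Y \in \cs : \Phi^Y = X\}).
\]
Thus the set of oracles that compute $X$ via the single functional $\Phi$ has positive Lebesgue measure, and \emph{a fortiori} $\lambda(\{Y \in \cs : X \leq_\T Y\}) > 0$. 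Sacks' theorem then forces $X$ to be computable.

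The main conceptual step is recognizing that the point mass of $X$ under $\overline{P}$ is not an artifact of the semi-measure formalism but literally corresponds to positive-measure many oracles that Turing-reduce to $X$ via a \emph{single} functional; this translation is exactly the content of Proposition~\ref{prop-barchar}, so once that proposition is in hand the argument reduces to a one-line application of Sacks' theorem. A more elementary but less clean alternative would avoid Sacks by fixing a rational $\delta' < \delta$, where $\delta$ witnesses atomhood of $X$, and working with the c.e.\ tree $T_{\delta'} = \{\sigma \in \str : P(\sigma) > \delta'\}$: since $P$-values at any fixed length sum to at most one, each level of $T_{\delta'}$ has at most $\lfloor 1/\delta' \rfloor$ nodes, so $T_{\delta'}$ has only finitely many infinite paths, one of which is $X$, and one could then isolate $X$ with additional bookkeeping using further threshold refinements. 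The Sacks-based route is preferable because it packages this bookkeeping inside an already-established theorem.
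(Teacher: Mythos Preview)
The paper does not supply its own proof of this proposition; it is stated as a citation to Bienvenu et al.\ without argument. Your Sacks-based proof is correct: the chain ``atom $\Rightarrow \overline{P}(\{X\})>0 \Rightarrow \lambda(\Phi^{-1}(\{X\}))>0 \Rightarrow X$ computable'' is valid, and each step is justified by results already recorded in the paper (the preceding lemma, Theorem~\ref{thm-InduceSemiMeasures}(ii), Proposition~\ref{prop-barchar}, and Sacks' theorem).

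Since there is no in-paper proof to compare against, let me just comment on your two routes. The Sacks route is clean and fully rigorous as written. The elementary alternative you sketch is in fact closer to how this result is usually proved directly (and is essentially the argument in the cited source): the tree $T_{\delta'}=\{\sigma: P(\sigma)>\delta'\}$ is c.e.\ with width bounded by $\lfloor 1/\delta'\rfloor$, and one recovers $X$ computably by noting that for any two distinct paths $Y,Z$ through $T_{\delta'}$ one has $\overline{P}(\{Y\})+\overline{P}(\{Z\})\leq 1$, so choosing $\delta'$ with $\overline{P}(\{X\})>\delta'>1-\overline{P}(\{X\})$ (possible once $\overline{P}(\{X\})>1/2$, which one can arrange by rescaling or by taking $\delta'$ close enough to $\delta$ and arguing level-by-level) isolates $X$ as the unique path along which $P$ stays above a suitable threshold. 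Your assessment that this requires more bookkeeping than the Sacks route is accurate; the trade-off is that the elementary argument avoids the dependence on Proposition~\ref{prop-barchar}, whose proof is itself only referenced rather than given in the paper.
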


\subsection{Examples of negligible and non-negligible collections}

We now provide a number of examples of negligible and non-negligible collections of sequences, where the first set of examples is given by a classical theorem of Sacks.

\begin{thm}[Sacks~\cite{Sac63}]
For $X\in\cs$, $\lambda(\{Y\in\cs\colon Y\geq_\T X\})>0$ if and only if $X$ is computable.  That is, $\{X\}$~is non-negligible if and only if $X$ is computable.
\end{thm}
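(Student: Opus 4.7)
The forward direction is immediate: if $X$ is computable, then the constant Turing functional sending every oracle to $X$ witnesses that every $Y\in\cs$ computes $X$, so $\lambda(\{Y\colon Y\geq_\T X\})=1>0$.

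For the converse, assume $A:=\{Y\colon Y\geq_\T X\}$ has positive measure. Since $A=\bigcup_{e\in\omega}A_e$, where $A_e:=\{Y\colon \Phi_e^Y=X\}$ for an effective enumeration $(\Phi_e)_{e\in\omega}$ of Turing functionals (and each $A_e$ is measurable, being $\Pi^0_2$ relative to $X$), countable subadditivity gives an index~$e$ with $\lambda(A_e)>0$. I~would fix such an~$e$ for the rest of the argument.

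The next step is a density step: find a cylinder $\llb\sigma\rrb$ such that $\lambda(A_e\cap\llb\sigma\rrb)>\tfrac{3}{4}\,\lambda(\llb\sigma\rrb)$. In Cantor space this follows from the Lebesgue density theorem (applied to dyadic cylinders), or equivalently from the fact that the martingale $M_n(Y)=\lambda(A_e\cap\llb Y\uh n\rrb)/\lambda(\llb Y\uh n\rrb)$ converges almost surely to the indicator of $A_e$, so on a positive-measure set of $Y$'s the approximating densities tend to $1$. With $\sigma$ so chosen, the disjointness of the conditions $\Phi_e^Y(n)=0$ and $\Phi_e^Y(n)=1$ forces $\lambda(\{Y\in\llb\sigma\rrb\colon\Phi_e^Y(n)=1-X(n)\})<\tfrac{1}{4}\,\lambda(\llb\sigma\rrb)$, since the complementary event contains $A_e\cap\llb\sigma\rrb$.

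Finally I~would use~$\sigma$ as non-uniform advice to describe an algorithm for $X$. For each $n\in\omega$ and $i\in\{0,1\}$ the set $U_n^i:=\{Y\in\llb\sigma\rrb\colon\Phi_e^Y(n)\halts=i\}$ is effectively open uniformly in $(n,i)$, so $\lambda(U_n^i)$ is left-c.e.\ uniformly in $(n,i)$. To compute $X(n)$, enumerate lower approximations to $\lambda(U_n^0)$ and $\lambda(U_n^1)$ until one of them exceeds the computable threshold $\tfrac{1}{2}\cdot 2^{-|\sigma|}$; by the density bounds above this must eventually happen for $i=X(n)$ and can never happen for $i=1-X(n)$. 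Output the winning~$i$. The only genuine subtlety is the density step, which replaces a purely measure-theoretic hypothesis by a uniform density guarantee on a single cylinder; everything else is a straightforward exploitation of the semi-computability of measures of effectively open sets.
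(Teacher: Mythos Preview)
Your argument is correct; it is the standard majority-vote proof of Sacks' theorem (the idea going back to de~Leeuw, Moore, Shannon, and Shapiro). Note, however, that the paper does not actually supply a proof of this statement: it is quoted as a classical result with a citation to Sacks~\cite{Sac63}, so there is no in-paper proof to compare your approach against.
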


Arbitrary subsets of $\cs$ of positive Lebesgue measure are further trivial examples of non-negligible collections.  Thus, each of the notions of randomness defined above in Subsection~\ref{subsec-notions} forms a non-negligible collection.

We can find more interesting examples by considering naturally occurring collections of Turing degrees.  We briefly review some of these collections.  First, a sequence has \emph{PA degree} if it computes a consistent completion of Peano arithmetic.  A sequence $X\in\cs$ is \emph{high} (or has \emph{high Turing degree}) if and only if $\{X\in\cs\colon X''\geq_\T\emptyset'\}$.  A sequence $X\in\cs$ is \emph{1-generic} if for every c.e.\ $S\subseteq\str$, there is some $\sigma\prec X$ such that either $\sigma\in S$ or for all $\tau\succeq\sigma$, $\tau\notin S$.  Similarly, $X\in\cs$ is \emph{2-generic} if for every $\emptyset'$-c.e.\ $S\subseteq\str$, there is some $\sigma\prec X$ such that either $\sigma\in S$ or for all $\tau\succeq\sigma$, $\tau\notin S$.  Next, $X\in\cs$ has \emph{hyperimmune-free degree} if and only if every $X$-computable function is dominated by some computable function.  Accordingly, $X$ has \emph{hyperimmune degree} if and only if $X$ computes a function that is not dominated by any computable function.
$X\in\cs$ is of \emph{DNC degree} if and only if there is some $f\leq_\T X$ such that $f(e)\neq \phi_e(e)$ for all $e\in\omega$.  Lastly, $X$ is \emph{generalized low} (or is in GL$_1$) if and only if $X'\equiv_\T X\oplus \emptyset'$.

To establish the negligibility or non-negligibility of the various collections given above, we will use the following heuristic principles, which are justified by Proposition \ref{prop-neg}.  
\begin{itemize}
\item[($P_1$)] \emph{If every sufficiently random sequence computes an element of some measurable $\B\subseteq\cs$, then $\B$ is non-negligible.}
\item[($P_2$)] \emph{If no sufficiently random sequence computes an element of some measurable $\B\subseteq\cs$, then $\B$ is negligible.}

\end{itemize}

\begin{prop} \label{prop-non-negligible}
The following collections are non-negligible:
\begin{enumerate}
\item[(i)] the collection of sequences of DNC degree, 
\item[(ii)] the collection of 1-generic sequences, 
\item[(iii)] the collection of sequences of hyperimmune degree, and
\item[(iv)] the collection of generalized low sequences.
\end{enumerate}
\end{prop}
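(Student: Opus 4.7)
The plan is to apply heuristic principle $(P_1)$ to each of the four items, reducing each to the question of whether every sufficiently random sequence computes a member of the collection in question. In three of the four cases, the collection itself already has full Lebesgue measure, so $(P_1)$ applies via the identity functional and Proposition~\ref{prop-neg}; only item~(ii) requires a separate argument.

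For item~(i), I would invoke the classical fact that every Martin-L\"of random sequence has DNC degree (a direct coding of initial segments of $X \in \MLR$ gives a DNC function, using that $X$ is incompressible and that any fixed-point function $e \mapsto \varphi_e(e)$ would admit short descriptions); combined with $\lambda(\MLR) = 1$, this shows the collection of DNC-degree sequences has full Lebesgue measure. For item~(iii), Martin's theorem (cited in the introduction) states directly that the collection of sequences of hyperimmune degree has Lebesgue measure~$1$. For item~(iv), I would use the standard fact that every $2$-random sequence is generalized low: if $X \in \MLR^{\emptyset'}$, a van Lambalgen-style argument yields $X' \leq_\T X \oplus \emptyset'$, so $X \in \mathrm{GL}_1$; since the $2$-randoms have full Lebesgue measure, so does $\mathrm{GL}_1$.

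The remaining item~(ii) is the genuinely nontrivial case, since the collection of $1$-generic sequences itself has Lebesgue measure zero (no $1$-generic is Kurtz random, let alone Martin-L\"of random). For this I would invoke Kurtz's theorem mentioned in the introduction: for every $\delta \in (0,1)$ there is a Turing functional $\Phi$ such that $\Phi(X)$ is $1$-generic for a set of $X$ of Lebesgue measure at least $1-\delta$. Applying Proposition~\ref{prop-neg} to this $\Phi$ then gives the non-negligibility of the collection of $1$-generic sequences.

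The main obstacle is therefore item~(ii), which cannot be handled by simply observing that the class has positive Lebesgue measure; it genuinely requires the probabilistic construction from Kurtz's forcing-style argument. All other items reduce to measure-$1$ facts that are either quoted in the introduction or are standard in the literature.
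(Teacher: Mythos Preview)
Your proposal is correct and follows the same overall strategy as the paper---applying $(P_1)$ by exhibiting, for each collection, a class of sufficiently random sequences each of which computes a member. The only differences are in which results you cite for items~(ii) and~(iii). For~(ii), the paper invokes Kautz's result that every $2$-random sequence computes a $1$-generic, whereas you invoke Kurtz's probabilistic-algorithm result (both are mentioned in the introduction and both work). For~(iii), the paper derives it from~(ii) via the fact that every $1$-generic has hyperimmune degree, whereas you cite Martin's measure-one theorem directly; again both routes are valid. Your framing of~(i),~(iii),~(iv) as ``the class already has full Lebesgue measure, so apply Proposition~\ref{prop-neg} with the identity functional'' is a perfectly good shortcut and arguably cleaner than phrasing everything through $(P_1)$.
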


\begin{proof}
To show that each of the above collections is non-negligible, we apply ($P_1$) by identifying a notion of randomness such that every sequence that is random in the respective sense computes an element of the given collection.  
For~(i), Ku\v cera \cite{Kuc85} proved that every Martin-L\"of random sequence is of DNC degree.  
For~(ii), Kautz~\cite{Kau91} established that every 2-random sequence computes a 1-generic.  Since every 1-generic sequence has hyperimmune degree, it further follows that every 2-random sequence computes a sequence of hyperimmune degree, yielding (iii).  Lastly, for~(iv), Kautz \cite{Kau91} also proved that every 2-random sequence is generalized low.
\end{proof}

\begin{prop} \label{prop-negligible}
The following collections are negligible:
\begin{enumerate}
\item[(i)] the collection of sequences of PA degree,
\item[(ii)] the collection of sequences of high degree, 
\item[(iii)] the collection of 2-generic sequences, and
\item[(iv)] the collection of non-computable sequences of hyperimmune-free degree.
\end{enumerate}
\end{prop}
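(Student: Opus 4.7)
The plan is to apply heuristic $(P_2)$ together with Proposition~\ref{prop-neg}: for each of the four collections $\B$, it suffices to identify a measure-$1$ class $\mathcal{R} \subseteq \cs$ such that no $X \in \mathcal{R}$ computes any element of $\B$, since then $\lambda\bigl(\bigcup_i \Phi_i^{-1}(\B)\bigr) \leq \lambda(\cs \setminus \mathcal{R}) = 0$. In each case the task reduces to pairing a natural measure-$1$ class of random sequences with an existing theorem that rules out the computation of a $\B$-element from any member of that class.

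Cases (i) and (ii) use upward closure of the target property. The PA degrees are closed upward under $\leq_\T$, so $X$ computes a PA-degree sequence if and only if $X$ is itself of PA degree; for $\mathcal{R}$ I take the difference randoms, which form a measure-$1$ class because $\MLR$ has measure $1$ and the Turing-complete degree is null by Sacks. Then Stephan's theorem (every Martin-L\"of random of PA degree is Turing complete) ensures no difference random is of PA degree. Similarly, the high degrees are upward closed; I take $\mathcal{R}$ to be the class of sequences in GL$_1$, which has measure $1$ by Proposition~\ref{prop-non-negligible}(iv), and note that for any $X$ in this class the relation $X'\equiv_\T X\oplus\emptyset'$ makes the highness condition $X'\geq_\T\emptyset''$ equivalent to $X\oplus\emptyset'\geq_\T\emptyset''$, which for measure-$1$-many $X$ is ruled out by the relativization of Sacks' theorem to $\emptyset'$.

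Case (iv) uses downward closure of hyperimmune-freeness: if $A \leq_\T B$ and $B$ is hyperimmune-free, then every $A$-computable function is $B$-computable and hence dominated by a computable one, so $A$ is hyperimmune-free. Consequently $X$ computes a non-computable hyperimmune-free sequence iff $X$ has a non-computable hyperimmune-free predecessor. For $\mathcal{R}$ I take the weakly 2-randoms, a measure-$1$ class, and invoke the classical theorem that every non-computable sequence Turing-reducible to a weakly 2-random is of hyperimmune degree.

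Case (iii) is the main obstacle, because the upward/downward-closure shortcut is unavailable: the set $\{X : X \text{ computes a 2-generic}\}$ is upward-closed in $X$, but 2-genericity itself is not preserved under Turing reducibility in either direction, so I cannot reduce to a property of $X$ alone. The plan is to take $\mathcal{R}$ to be a sufficiently random measure-$1$ class (weakly 2-random, or $2$-random) and apply the classical observation that no such $X$ computes a 2-generic $Y$; this ultimately reflects the fact that a 2-generic $Y$ is $1$-generic relative to $\emptyset'$, and producing such a $Y$ from $X$ alone (without access to $\emptyset'$) can be excluded via an appropriate forcing-versus-randomness argument.
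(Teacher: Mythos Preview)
Your overall strategy matches the paper's: apply $(P_2)$ by exhibiting, for each collection, a measure-$1$ class no member of which computes an element of the collection. Parts~(i) and~(ii) are fine. For~(i) you invoke Stephan's theorem on difference randoms, which is exactly what the paper does (via the Franklin--Ng characterization). For~(ii) your route differs from the paper's: the paper simply cites Kautz's result that no $3$-random is high, whereas you combine the fact that almost every $X$ is in GL$_1$ with the relativized Sacks theorem (the set of $X$ with $X\oplus\emptyset'\geq_\T\emptyset''$ is null). Your argument is correct and pleasantly self-contained; note only that Proposition~\ref{prop-non-negligible}(iv) literally asserts non-negligibility, not measure~$1$---the measure-$1$ statement is what its \emph{proof} establishes (every $2$-random is GL$_1$).

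Part~(iii) is incomplete. You gesture at a ``forcing-versus-randomness argument'' but never actually supply one, and the sentence about $1$-genericity relative to $\emptyset'$ does not lead anywhere concrete. The paper closes this case cleanly by citing the theorem of Nies, Stephan, and Terwijn that every $2$-random forms a minimal pair with every $2$-generic; since $2$-generics are non-computable, no $2$-random can compute one. You should either cite that result or give a genuine argument.

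Part~(iv) contains an actual error: the ``classical theorem'' you invoke---that every non-computable sequence below a weakly $2$-random is of hyperimmune degree---is false. By the hyperimmune-free basis theorem there exist hyperimmune-free Martin-L\"of random sequences~$X$, and any such $X$ is weakly $2$-random (since $X$ forms a minimal pair with $\emptyset'$: any $Y\leq_\T X,\emptyset'$ is $\Delta^0_2$ and hyperimmune-free, hence computable by Miller--Martin). Then $X$ itself is a non-computable hyperimmune-free sequence below the weakly $2$-random $X$. The paper instead takes $\mathcal{R}$ to be the $2$-randoms and cites Barmpalias, Day, and Lewis-Pye: every non-computable $Y$ below a $2$-random computes a $1$-generic, hence has hyperimmune degree; so no $2$-random can compute a non-computable hyperimmune-free sequence. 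Replacing ``weakly $2$-random'' by ``$2$-random'' and citing that result fixes your argument.
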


\begin{proof} To show that each of the above collections is negligible, we apply ($P_2$) by identifying 
a notion of randomness such that no sequence that is random in the respective sense computes an element of the given collection.  
For (i), Franklin and Ng~\cite{FranklinNg} extended work of Stephan~\cite{franktechreport} to show that no difference random sequence computes a completion of PA.
For (ii), Kautz~\cite{Kau91} established that no 3-random has high degree. As the high degrees are closed upwards under Turing reducibility, this implies that no 3-random computes a sequence of high degree.  For (iii), Nies, Stephan, and Terwijn~\cite{NieSteTer05} proved that every 2-random sequence forms a minimal pair in the Turing degrees with every 2-generic, and so no 2-random computes a 2-generic.  Lastly, for (iv), Lewis, Day, and Barmpalias~\cite[Theorem 5.1]{BarDayLew14} showed that for every $2$\nobreakdash-random sequence $X$, every non-computable~$Y\leq_T X$ computes a 1-generic sequence and therefore in particular a sequence of hyperimmune degree. So if any $2$-random could compute a non-computable sequence of hyperimmune-free degree, then this sequence could in turn compute a sequence of hyperimmune degree, contradicting the fact that hyperimmune-freeness is closed downwards under Turing reducibility.
\end{proof}

\section{The Levin-V'yugin Degrees}\label{sec-lv-degrees}

Using the notion of negligibility, we can define a degree structure whose elements are given by Turing invariant subsets of~$\cs$.  Recall that $\A\subseteq\cs$ is Turing invariant if $X\in\A$ and $Y\equiv_\T  X$ imply~$Y\in \A$.  Let $\mathcal{I}$ denote the set of measurable Turing invariant subsets of $\cs$.  In what follows, all Turing invariant collections of sets that we consider are Borel and thus measurable.   One can routinely verify that $(\mathcal{I}, \cap,\cup,^c)$ is a Boolean algebra.  

We now define a reducibility~$\leq_\LV$ on~$\mathcal{I}$.

\begin{samepage}\begin{defn}
	Let $\A,\B\in \mathcal{I}$.
	\begin{itemize}
		\item[(i)] $\A\leq_\LV \B$ if and only if $\A\setminus\B$ is negligible. 
		\item[(ii)] $\A\equiv_\LV \B$ if and only if $\A\leq_\LV \B$ and $\B\leq_\LV \A$.
	\end{itemize}
\end{defn}\end{samepage}

Given $\A,\B\in \mathcal{I}$,  $\A\leq_\LV \B$ says that, for any probabilistic algorithm, the probability that it produces an element of $\A$ that is not in $\B$ is~$0$. The stronger statement $\A<_\LV \B$ says in~addition that there is some probabilistic algorithm such that the probability that it produces an element of~$\B$ that is not in~$\A$ is strictly positive. In this sense, the larger a collection of sets is with regards to the given order, the easier it is to probabilistically produce an element of it.

It is well-known that a Boolean algebra modulo an equivalence relation is still a Boolean algebra.  Thus, $\mathcal{D}_\LV=\mathcal{I}/{\equiv_\LV}$ is a Boolean algebra, which we refer to as the \emph{Levin-V'yugin algebra}.  In fact, $\mathcal{D}_\LV$ is a measure algebra, since it is a Boolean algebra of measurable sets modulo $\overline M$-null sets.  Individual elements of $\D_\LV$ will be referred to as $\LV$\emph{-degrees}.  We will write $\LV$-degrees as $\mathbf{a},\mathbf{b},\dotsc$ and so on.  For $\mathcal{A}\in\mathcal{I}$, $\mathbf{deg_\mathbf{LV}(\mathcal{A})}$ denotes the $\LV$-degree of $\mathcal{A}$.  Given $\LV$-degrees $\mathbf{a}$ and $\mathbf{b}$ and any $\mathcal{A}\in\mathbf{a}$ and $\mathcal{B}\in\mathbf{b}$, we define
\begin{itemize}
\item[] $\mathbf{a}\wedge \mathbf{b}:=\mathbf{deg_\mathbf{LV}(\mathcal{A}\cap\mathcal{B})}$, 
\item[] $\mathbf{a}\vee \mathbf{b}:=\mathbf{deg_\mathbf{LV}(\mathcal{A}\cup\mathcal{B})}$, and
\item[] $\mathbf{a}^c:=\mathbf{deg_\mathbf{LV}}(\cs\setminus\mathcal{A})$.
\end{itemize}
It is straightforward to verify that these are well-defined. 
With slight abuse of notation,  we let~$\leq_\LV$ denote the order on $\D_\LV$ that is induced by the order $\leq_\LV$ on $\mathcal{I}$ modulo the equivalence relation~$\equiv_\LV$; that is, we write 
write $\mathbf{a}\leq_\LV\mathbf{b}$, for two $\LV$-degrees $\mathbf{a}$ and $\mathbf{b}$, if there exist~$\mathcal{A}\in\mathbf{a}$ and~$\mathcal{B}\in\mathbf{b}$ such that~$\mathcal{A}\leq_\LV\mathcal{B}$.
Then the following is immediate.
\begin{prop}\
\begin{itemize}
\item[(i)] The bottom element $\mathbf{0}$ of $\mathcal{D}_\LV$ consists of the Turing invariant negligible subsets of $\cs$.
\item[(ii)] The top element $\mathbf{1}$ of $\D_\LV$ consists of all Turing invariant $\A\subseteq\cs$ such that $\cs\setminus\A$ is negligible.
\end{itemize}
\end{prop}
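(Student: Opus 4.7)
The plan is to unfold the definitions directly; the proposition is essentially immediate because the order $\leq_\LV$ is defined via set-theoretic difference modulo negligibility, and negligibility is monotone under inclusion for measurable sets. Concretely, I will first observe that if $\A \subseteq \A'$ are both measurable and $\A'$ is negligible, then $\A$ is negligible too, since $\overline{M}(\A) \leq \overline{M}(\A') = 0$. I will also note that the trivial Turing invariant sets $\emptyset$ and $\cs$ both lie in $\mathcal{I}$.

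For part~(i), I would argue in two directions. If $\A \in \mathcal{I}$ is negligible, then for any $\B \in \mathcal{I}$ we have $\A \setminus \B \subseteq \A$, hence $\A \setminus \B$ is negligible, so $\A \leq_\LV \B$; thus $\deg_\LV(\A) = \mathbf{0}$. Conversely, if $\deg_\LV(\A) = \mathbf{0}$, then in particular $\A \leq_\LV \emptyset$, i.e.\ $\A = \A \setminus \emptyset$ is negligible. To justify that $\mathbf{0}$ is a single $\LV$-degree made up of exactly these sets, note that any two Turing invariant negligible sets $\A, \B$ satisfy $\A \setminus \B \subseteq \A$ and $\B \setminus \A \subseteq \B$ negligible, so $\A \equiv_\LV \B$.

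For part~(ii), the argument is completely dual. If $\A \in \mathcal{I}$ and $\cs \setminus \A$ is negligible, then for any $\B \in \mathcal{I}$ we have $\B \setminus \A \subseteq \cs \setminus \A$, so $\B \setminus \A$ is negligible and $\B \leq_\LV \A$, giving $\deg_\LV(\A) = \mathbf{1}$. Conversely, $\deg_\LV(\A) = \mathbf{1}$ forces $\cs \leq_\LV \A$, i.e.\ $\cs \setminus \A$ is negligible. As before, any two such sets are $\equiv_\LV$-equivalent since their symmetric difference is contained in the union of their (negligible) complements.

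There is no real obstacle here; the only thing one must be a bit careful about is to keep track of measurability when invoking monotonicity of $\overline{M}$, but this is immediate because $\mathcal{I}$ is a Boolean algebra and hence closed under set difference, so $\A \setminus \B$ is automatically measurable whenever $\A, \B \in \mathcal{I}$.
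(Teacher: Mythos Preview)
Your proof is correct and follows exactly the direct unfolding of definitions that the paper has in mind; indeed, the paper simply declares the proposition ``immediate'' without writing out any argument. Your careful verification of both directions and the remark on measurability are entirely in line with this.
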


\subsection{Elementary properties of the $\LV$-degrees}

Recall that $A$ is an atom 
 of a Boolean algebra~$\B$ if there are no elements $A_0$, $A_1\in\B\setminus\{0\}$ such that $A=A_0\vee A_1$ and $A_0\wedge A_1=0$.  To avoid confusion with the atoms of a semi-measure, we will hereafter refer to atoms of $\D_\LV$ as \emph{$\D_\LV$-atoms}.
As reported by V'yugin~\cite{Vyu82}  in results attributed to Levin, two $\D_\LV$-atoms are readily identifiable:  the $\LV$-degree of the computable sequences, denoted~$\mathbf{c}$, and the $\LV$-degree of the Martin-L\"of random sequences, denoted $\mathbf{r}$.  We provide the proofs of these results here.

For $\mathcal{A}\subseteq\cs$, let $\dspec_\T (\mathcal{A})=\{\deg_\T (X)\colon X\in\mathcal{A}\}$ be the \emph{Turing degree spectrum of $\mathcal{A}$}. The following basic fact will be useful.

\begin{samepage}
\begin{lem}\label{fact1}
Given $\mathbf{a_0},\mathbf{a_1}\in\mathcal{D}_\LV$ such that $\mathbf{a_0}\wedge\mathbf{a_1}=\mathbf{0}$, there are $\A_0,\A_1\in\mathcal{I}$ such that 
\begin{itemize}
\item[(i)] $\dspec_\T (\A_0)\cap\dspec_\T (\A_1)=\emptyset$ and
\item[(ii)] $\mathbf{deg_\mathbf{LV}(}\A_0\mathbf{)}=\mathbf{a_0}$ and $\mathbf{deg_\mathbf{LV}(}\A_1\mathbf{)}=\mathbf{a_1}$.
\end{itemize}
Furthermore, for any given $\A\in\mathcal{I}$ satisfying $\mathbf{deg_\mathbf{LV}(}\A\mathbf{)}=\mathbf{a_0\vee a_1}$, we can w.l.o.g.\ assume that
\begin{itemize}
\item[(iii)] $\A_i\subseteq \A$ for $i=0,1$.
\end{itemize}
\end{lem}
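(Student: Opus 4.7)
The plan is to start with arbitrary representatives $\B_0 \in \mathbf{a_0}$ and $\B_1 \in \mathbf{a_1}$ in~$\mathcal{I}$ and then cut them down by removing their overlap, which by hypothesis is negligible. The key observation that will deliver (i) is that any two \emph{disjoint} Turing invariant sets automatically have disjoint Turing degree spectra: if $X \in \A_0$ and $Y \equiv_\T X$, then Turing invariance of $\A_0$ forces $Y \in \A_0$, so $Y \notin \A_1$. So it suffices to arrange that $\A_0$ and $\A_1$ are disjoint while staying in the desired $\LV$-degrees.

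Concretely, I would define
\[
\A_0' := \B_0 \setminus \B_1 \qquad \text{and} \qquad \A_1' := \B_1 \setminus \B_0.
\]
Both sets lie in $\mathcal{I}$, since $\B_0$ and $\B_1$ are measurable and Turing invariant (intersections and complements of such sets are again such), and they are disjoint by construction. Since $\mathbf{a_0} \wedge \mathbf{a_1} = \mathbf{0}$, the set $\B_0 \cap \B_1$ is negligible; as $\A_i' \triangle \B_i \subseteq \B_0 \cap \B_1$ for $i = 0,1$, we conclude $\A_i' \equiv_\LV \B_i$, giving~(ii) for $\A_i'$. Combined with the observation of the previous paragraph, this already yields (i) and (ii) without the constraint $\A_i \subseteq \A$.

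For (iii), given any $\A \in \mathcal{I}$ with $\mathbf{deg_\mathbf{LV}(}\A\mathbf{)} = \mathbf{a_0} \vee \mathbf{a_1} = \mathbf{deg_\mathbf{LV}(}\B_0 \cup \B_1\mathbf{)}$, the symmetric difference $\A \triangle (\B_0 \cup \B_1)$ is negligible, so in particular $(\B_0 \cup \B_1) \setminus \A$ is negligible. I would then set
\[
\A_0 := \A_0' \cap \A \qquad \text{and} \qquad \A_1 := \A_1' \cap \A.
\]
These are still Turing invariant (intersection of Turing invariant sets), disjoint, and contained in $\A$. Moreover, $\A_i \triangle \A_i' = \A_i' \setminus \A \subseteq (\B_0 \cup \B_1) \setminus \A$ is negligible, so $\A_i \equiv_\LV \A_i' \equiv_\LV \B_i$, and the disjoint spectra property passes from $(\A_0', \A_1')$ down to $(\A_0, \A_1)$ since $\A_i \subseteq \A_i'$.

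There is no real obstacle here beyond keeping the bookkeeping straight: the proof rests on the single structural fact that disjointness plus Turing invariance forces disjoint degree spectra, together with the routine observation that modifying a set by a negligible piece preserves its $\LV$-degree.
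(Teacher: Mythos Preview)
Your proposal is correct and follows essentially the same approach as the paper: both start from arbitrary representatives $\B_0,\B_1$, form the disjoint pair $\B_0\setminus\B_1$ and $\B_1\setminus\B_0$, and then intersect with $\A$ for part~(iii). Your explicit remark that disjoint Turing invariant sets automatically have disjoint degree spectra is exactly the observation the paper uses implicitly.
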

\end{samepage}

\begin{proof}
The statement $\mathbf{a_0}\wedge\mathbf{a_1}=\mathbf{0}$ says that if we pick \textit{any} element $\B_0 \in \mathcal{I}$ of the equivalence class $\mathbf{a_0}$ and \textit{any} element $\B_1 \in \mathcal{I}$ of the equivalence class $\mathbf{a_1}$, then $\B_0 \cap \B_1$ is negligible. Then  $\A_0 := \B_0 \setminus \B_1 \equiv_\LV \B_0$ is in the equivalence class $\mathbf{a_0}$, $\A_1 := \B_1 \setminus \B_0 \equiv_\LV \B_1$ is in $\mathbf{a_1}$, and since $\B_0$ and $\B_1$ are closed under Turing equivalence we also have ${\dspec_\T (\A_0)\cap\dspec_\T (\A_1)=\emptyset}$.

To verify (iii), suppose that  $\mathbf{deg_\mathbf{LV}(}\A\mathbf{)}=\mathbf{a_0\vee a_1}$ for some $\A\in\mathcal{I}$ and
let $\A_0^\prime$ and $\A_1^\prime$ satisfy conditions (i) and (ii) above.  Then $\mathbf{deg_\mathbf{LV}(}\A\mathbf{)}=\mathbf{deg_\mathbf{LV}(}\A_0^\prime\cup\A_1^\prime\mathbf{)}$, which implies that $\A\Delta(\A_0^\prime\cup\A_1^\prime)$ is negligible. As $\A_0^\prime$ and $\A_1^\prime$ are disjoint, this implies that  $\A_i^\prime\setminus\A$ is negligible for $i=0,1$. For~${i=0,1}$, setting $\A_i= \A_i^\prime\cap\A$, we have
\[
\A_i^\prime= (\A_i^\prime \cap \A) \cup (\A_i^\prime\setminus \A) = \A_i \cup (\A_i^\prime\setminus \A).
\]
Thus, $\A_i^\prime$ and $\A_i$ differ only by a negligible set for $i=0,1$, and thus $\A_0$ and $\A_1$ satisfy~(ii).  Moreover, since $\A_i\subseteq\A_i^\prime$ for $i=0,1$, $\A_0$ and $\A_1$ also satisfy~(i). Thus, (iii)~holds.
\end{proof}

\begin{prop}\label{prop-comp-atom}
$\mathbf{c}$ is a $\D_\LV$-atom.
\end{prop}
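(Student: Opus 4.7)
The plan is to verify the two requirements for $\mathbf{c}$ to be a $\D_\LV$-atom: first, that $\mathbf{c} \neq \mathbf{0}$, and second, that $\mathbf{c}$ admits no nontrivial splitting $\mathbf{c} = \mathbf{a_0} \vee \mathbf{a_1}$ with $\mathbf{a_0}, \mathbf{a_1} \neq \mathbf{0}$ and $\mathbf{a_0} \wedge \mathbf{a_1} = \mathbf{0}$. The first requirement is immediate from Sacks' theorem recalled in Section~\ref{sec-neg-nonneg}: for any computable $X$ the singleton $\{X\}$ is non-negligible, so $\COMP \supseteq \{X\}$ is non-negligible as well, giving $\mathbf{c} \neq \mathbf{0}$.

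For the second requirement I would assume, for a contradiction, that such a splitting exists and invoke Lemma~\ref{fact1} with the choice $\A := \COMP$. This is legitimate because by definition $\mathbf{deg_\mathbf{LV}(}\COMP\mathbf{)} = \mathbf{c} = \mathbf{a_0} \vee \mathbf{a_1}$, which is exactly the hypothesis of clause~(iii) of that lemma. The lemma then supplies representatives $\A_0 \in \mathbf{a_0}$ and $\A_1 \in \mathbf{a_1}$ satisfying $\A_0, \A_1 \subseteq \COMP$ and $\dspec_\T(\A_0) \cap \dspec_\T(\A_1) = \emptyset$.

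The key observation is now that every element of $\COMP$ has the same Turing degree, namely $\mathbf{0}_\T$, so $\dspec_\T(\A_i) \subseteq \{\mathbf{0}_\T\}$ for $i=0,1$. Two disjoint subsets of a singleton cannot both be nonempty, so without loss of generality $\dspec_\T(\A_0) = \emptyset$, whence $\A_0 = \emptyset$ and therefore $\mathbf{a_0} = \mathbf{0}$, contradicting our assumption.

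There is essentially no obstacle to this proof; atomhood of $\mathbf{c}$ is a corollary of Lemma~\ref{fact1} once one notices that being contained in $\COMP$ collapses the Turing spectrum to at most one degree. The only point requiring a moment of care is confirming that clause~(iii) of Lemma~\ref{fact1} can be applied with $\A = \COMP$, but this is immediate from the assumed splitting of $\mathbf{c}$.
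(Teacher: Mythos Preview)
Your proof is correct and follows essentially the same route as the paper's: both assume a nontrivial splitting $\mathbf{c}=\mathbf{a_0}\vee\mathbf{a_1}$, invoke Lemma~\ref{fact1} with $\A=\COMP$, and observe that conditions~(i) and~(iii) of that lemma are jointly incompatible because all computable sequences share a single Turing degree. You spell out the spectrum argument and add the check that $\mathbf{c}\neq\mathbf{0}$, which the paper omits, but the core argument is identical.
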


\begin{proof}
Suppose that $\mathbf{c}$ is not a $\D_\LV$-atom.  Then there are $\LV$-degrees $\mathbf{a_0},\mathbf{a_1}>\mathbf{0}$ such that $\mathbf{a_0} \wedge\mathbf{a_1}=\mathbf{0}$ and $\mathbf{a_0}\vee\mathbf{a_1}=\mathbf{c}$.  Then,
if we choose $\A$ in condition~(iii) of Lemma~\ref{fact1} as the collection of all computable sequences, there are $\A_0,\A_1\in\mathcal{I}$ satisfying all three conditions of that lemma. But clearly, conditions~(i) and~(iii) are in contradiction with each other in this case.
\end{proof}

\begin{thm}\label{thm:rand-atom}
$\mathbf{r}$ is a $\D_\LV$-atom.
\end{thm}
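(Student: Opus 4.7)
My plan is to proceed by contradiction, following the scheme of Proposition \ref{prop-comp-atom}. Suppose $\mathbf{r}$ is not a $\D_\LV$-atom; then applying Lemma \ref{fact1} with $\A := (\MLR)^{\equiv_\T}$ yields Turing-invariant, disjoint, non-negligible $\A_0, \A_1 \subseteq (\MLR)^{\equiv_\T}$ with $\dspec_\T(\A_0) \cap \dspec_\T(\A_1) = \emptyset$ and union equal to $(\MLR)^{\equiv_\T}$ modulo a negligible set.

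First, I would invoke Kolmogorov's zero-one law: since Turing equivalence is invariant under modification of finitely many bits, every Turing-invariant Borel subset of $\cs$ is a tail event and so has Lebesgue measure in $\{0, 1\}$. By Corollary \ref{dfgsdafkdfjhsdsdfg}, negligible sets are Lebesgue-null, so $\lambda(\A_0 \cup \A_1) = \lambda((\MLR)^{\equiv_\T}) \geq \lambda(\MLR) = 1$, and by disjointness exactly one of $\A_0, \A_1$ has full Lebesgue measure. Without loss of generality, $\lambda(\A_0) = 1$ and $\lambda(\A_1) = 0$.

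The contradiction will come from $\A_1$ being simultaneously non-negligible and Lebesgue-null inside $(\MLR)^{\equiv_\T}$. The easy half: by Theorem \ref{thm-levin-schnorr}, $\MLR$ is the countable union over $c \in \omega$ of the sets $\MLR_c := \{X : M(X\uh n) \leq c\cdot\lambda(X\uh n) \text{ for all } n\}$, on each of which $\overline M \leq c\cdot\lambda$; consequently $\overline M$ restricted to $\MLR$ is absolutely continuous with respect to $\lambda$, and so $\lambda(\A_1 \cap \MLR) = 0$ forces $\overline M(\A_1 \cap \MLR) = 0$. The hard half is to show $\overline M(\A_1 \setminus \MLR) = 0$ as well. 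For this I would use that the Turing-invariance of $\A_1$ inside $(\MLR)^{\equiv_\T}$ makes $\A_1$ the Turing-closure of $\A_1 \cap \MLR$, and then combine Theorem \ref{thm-levin-kautz} with Proposition \ref{prop_atoms_are_comp} --- atoms of any left-c.e.\ semi-measure are computable and hence absent from $(\MLR)^{\equiv_\T}$ --- to transfer nullity of $\overline M$ on $\A_1 \cap \MLR$ to the Turing-closure, obtaining $\overline M(\A_1) = 0$ and the desired contradiction with non-negligibility.

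The main obstacle will be this transfer step, because $\overline M$ is not in general absolutely continuous with respect to $\lambda$ outside of $\MLR$. To overcome it one needs a randomness-preservation argument for Turing functionals together with the Levin-Kautz correspondence between randomness for computable measures and Turing equivalence classes of Lebesgue-MLR sequences, effectively showing that all of $\overline M$'s ``non-Lebesgue'' mass on $(\MLR)^{\equiv_\T}$ is accounted for by its mass on $\MLR$ at the same Turing degrees; this is the conceptual core of why $\mathbf r$ turns out to be a $\D_\LV$-atom.
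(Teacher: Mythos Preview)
Your outline is correct and uses the same three ingredients as the paper --- Kolmogorov's 0--1 law, the fact that a $\lambda$-null subset of $\MLR$ is negligible (the paper's Lemma~\ref{lem:rand-atom-3}), and a Turing-closure step --- but in the reverse order. The paper first passes to $\A_i^* = \A_i \cap \MLR$, shows each $\A_i^*$ is non-negligible via the Turing-closure argument, applies Lemma~\ref{lem:rand-atom-3} in contrapositive form to get $\lambda(\A_i^*) > 0$, and then invokes Kolmogorov's law to force $\lambda(\A_0^*) = \lambda(\A_1^*) = 1$, a contradiction. You apply Kolmogorov first to the Turing-invariant $\A_i$, then the analogue of Lemma~\ref{lem:rand-atom-3}, then the Turing-closure step.

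However, you have the relative difficulty inverted. What you call the ``easy half'' --- that $\lambda(\B) = 0$ for $\B \subseteq \MLR$ implies $\overline{M}(\B) = 0$ --- is precisely the content of Lemma~\ref{lem:rand-atom-3}, which the paper proves via Radon--Nikodym derivatives and martingale convergence (Lemmas~\ref{lem:rand-atom-1} and~\ref{lem:rand-atom-2}). Your stratification by the sets $\MLR_c$ does give a more elementary route: any open cover of $\B \cap \MLR_c$ can be pruned to cylinders $\llb\sigma\rrb$ that actually meet $\B$, and for such $\sigma$ one has $\overline{M}(\sigma) \leq M(\sigma) \leq c\cdot\lambda(\sigma)$, so $\overline{M}(\B \cap \MLR_c) \leq c\cdot\lambda(\B) = 0$. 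This is a genuine simplification over the paper's argument.

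Your ``hard half'', by contrast, is not an obstacle, and neither Theorem~\ref{thm-levin-kautz} nor Proposition~\ref{prop_atoms_are_comp} is relevant. Once you know $\A_1 \cap \MLR$ is negligible and $\A_1 = (\A_1 \cap \MLR)^{\equiv_\T}$, you only need that negligibility is preserved under Turing closure: if some functional $\Psi$ maps a set of positive $\lambda$-measure into $(\B)^{\equiv_\T}$, then since every element of $(\B)^{\equiv_\T}$ computes an element of $\B$ via some $\Phi_j$, some composition $\Phi_j \circ \Psi$ maps a set of positive $\lambda$-measure into $\B$. This is exactly the argument in the proof of Lemma~\ref{lem-negfacts}(i), and it is what the paper uses when it observes $\bigcup_e \Phi_e^{-1}(\A_i) = \bigcup_e \Phi_e^{-1}(\A_i^*)$. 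No randomness-preservation or Levin--Kautz machinery is needed; your proposed route there would be a detour, and as sketched it is not clear it would close.
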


To prove Theorem \ref{thm:rand-atom}, we will need to draw upon several classical results from measure theory, as well as several auxiliary lemmata.  Here we follow V'yugin's general proof strategy while filling in more details, especially in isolating and proving Lemma \ref{lem:rand-atom-1} below.

As noted in Remark \ref{rmk1}~(iii), for any left-c.e.\ semi-measure $P$, $\overline{P}$ is absolutely continuous with respect to $\overline{M}$.  It follows by the Radon-Nikodym Theorem that there is a measurable  function~$\frac{d\overline{P}}{d\overline{M}}$ such that, for all measurable $\X \subseteq 2^\omega$,
\[
\overline{P}(\X)=\int_\X{\frac{d\overline{P}}{d\overline{M}}}(X)d\overline{M}(X). 
\]
The Radon-Nikodym Theorem further guarantees that for any measurable  $f\!\colon\cs\rightarrow\mathbb{R}$  such that for all measurable $\X \subseteq 2^\omega$ the property
\[
\overline{P}(\X)=\int_\X f(X)d\overline{M}(X)
\]
holds, we have $f(X)=\dfrac{d\overline{P}}{d\overline{M}}(X)$ for $\overline{M}$-almost every $X\in\cs$.  

\begin{lem}\label{lem:rand-atom-1}
$\dfrac{d\overline{P}}{d\overline{M}}(X)=\lim_{n\rightarrow\infty}\dfrac{\overline{P}(X\uh n)}{\overline{M}(X\uh n)}$ for $\overline{M}$-almost every $X\in\cs$.
\end{lem}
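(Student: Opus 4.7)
The plan is to recognize the right-hand side as a martingale—specifically, as the conditional expectation $E_{\overline{M}}[d\overline{P}/d\overline{M} \mid \mathcal{F}_n]$, where $\mathcal{F}_n$ is the finite $\sigma$-algebra on $\cs$ generated by the level-$n$ cylinders—and then to invoke Levy's upward theorem (the closed-martingale convergence theorem) to obtain almost-sure convergence to $d\overline{P}/d\overline{M}$.

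First I would dispose of the minor technical issue that the ratio $\overline{P}(X\uh n)/\overline{M}(X\uh n)$ may be undefined. The set $N := \{X \in \cs : \overline{M}(X\uh n) = 0 \text{ for some } n\}$ is the union, over $n$ and over $\sigma$ of length $n$ with $\overline{M}(\sigma)=0$, of the cylinders $\llb\sigma\rrb$, and is therefore a countable union of $\overline{M}$-null sets. Hence $\overline{M}(N)=0$, and on $\cs \setminus N$ I may legitimately define $f_n(X) := \overline{P}(X\uh n)/\overline{M}(X\uh n)$; absolute continuity of $\overline{P}$ with respect to $\overline{M}$ (Remark~\ref{rmk1}(iii)) ensures the ratio is also finite on that set.

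Next I would verify that $(f_n)$ is a martingale under $\overline{M}$ with respect to $(\mathcal{F}_n)$, and in fact that $f_n = E_{\overline{M}}[d\overline{P}/d\overline{M} \mid \mathcal{F}_n]$ almost surely. Since $f_n$ is constant on each cylinder $\llb\sigma\rrb$ with $|\sigma|=n$, it is $\mathcal{F}_n$-measurable; and for every such $\sigma$,
\[
\int_{\llb\sigma\rrb} f_n \, d\overline{M} \;=\; \frac{\overline{P}(\sigma)}{\overline{M}(\sigma)}\cdot\overline{M}(\sigma) \;=\; \overline{P}(\sigma) \;=\; \int_{\llb\sigma\rrb} \frac{d\overline{P}}{d\overline{M}} \, d\overline{M},
\]
which simultaneously yields the martingale property and the identification with the conditional expectation. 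Cylinders with $\overline{M}(\sigma)=0$ are handled separately via absolute continuity, which forces $\overline{P}(\sigma)=0$ as well.

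Finally, since the cylinders of all levels generate the full Borel $\sigma$-algebra on $\cs$, the filtration $(\mathcal{F}_n)$ increases to the entire Borel $\sigma$-algebra. Levy's upward theorem, applied to the uniformly integrable martingale $(E_{\overline{M}}[d\overline{P}/d\overline{M} \mid \mathcal{F}_n])$, then yields $f_n \to d\overline{P}/d\overline{M}$ both $\overline{M}$-almost surely and in $L^1(\overline{M})$; the almost-sure part is exactly the claim. I do not anticipate a serious obstacle: the content reduces to the standard dyadic Lebesgue differentiation theorem reformulated in martingale language, and the only care required is to first remove the $\overline{M}$-null set on which the ratios are not defined.
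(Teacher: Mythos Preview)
Your proposal is correct and follows the same overall martingale strategy as the paper, but packages the argument a bit differently. The paper first verifies that $\sigma\mapsto\overline{P}(\sigma)/\overline{M}(\sigma)$ is an $\overline{M}$-martingale, invokes the (general) martingale convergence theorem to obtain existence of the limit $\overline{M}$-a.e., and then identifies the limit with $d\overline{P}/d\overline{M}$ by checking $(\dagger)$: using the uniform bound $\overline{P}(\sigma)\le c\cdot\overline{M}(\sigma)$ from Remark~\ref{rmk1}(ii) together with dominated convergence to swap limit and integral, and finally appealing to the uniqueness clause of Radon--Nikodym. You instead recognize $f_n$ directly as the conditional expectation $E_{\overline{M}}[d\overline{P}/d\overline{M}\mid\mathcal{F}_n]$ and apply L\'evy's upward theorem, which delivers existence and identification of the limit in one stroke. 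Your route is slightly cleaner (no separate DCT step, no explicit use of the multiplicative bound), while the paper's route keeps the argument closer to the algorithmic-randomness notion of martingale and makes the role of the bound from Remark~\ref{rmk1}(ii) explicit; substantively the two are the same proof.
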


\begin{proof}

First, recall that for a measure $\mu$ on $\cs$, a $\mu$-martingale is a function $d\colon\str\rightarrow \mathbb{R}^{\geq 0}$ such that 
\[
\mu(\sigma)d(\sigma)=\mu(\sigma0)d(\sigma0)+\mu(\sigma1)d(\sigma1)
\]
for every $\sigma\in\str$.\footnote{See, for instance, Nies~\cite[Chapter 7]{Nie09} or Downey and Hirschfeldt~\cite[Section 6.3]{DowHir10} for a discussion of the role of martingales in the theory of algorithmic randomness.}

Now, observe that $\dfrac{\overline{P}}{\,\overline{M}\,}$ is an $\overline{M}$-martingale. Indeed,  for every $\sigma\in\str$,
\[
\overline{M}(\sigma)\dfrac{\overline{P}(\sigma)}{\overline{M}(\sigma)}=\overline{P}(\sigma)=\overline{P}(\sigma0)+\overline{P}(\sigma1)=
\overline{M}(\sigma0)\dfrac{\overline{P}(\sigma0)}{\overline{M}(\sigma0)}+\overline{M}(\sigma1)\dfrac{\overline{P}(\sigma1)}{\overline{M}(\sigma1)}.
\]
   Thus $\lim_{n\rightarrow\infty}\dfrac{\overline{P}(X\uh n)}{\overline{M}(X\uh n)}$ exists for $\overline{M}$-almost every $X\in\cs$ by the martingale convergence theorem.\footnote{It is well known that every martingale in the sense of algorithmic randomness (as given above) is a martingale in the classical sense, and thus the classical martingale convergence theorem is applicable.  See Downey and Hirschfeldt~\cite[Theorem~7.1.3]{DowHir10} for a proof of an effective version of the martingale convergence theorem.}  Thus, by the Radon-Nikodym theorem, we just need to show that 
\begin{equation*}\tag{$\dagger$}
\overline{P}(\A)=\int_\A \lim_{n\rightarrow\infty}\dfrac{\overline{P}(X\uh n)}{\overline{M}(X\uh n)} d\overline{M}(X)
\end{equation*}
for every clopen $\A\subseteq\cs$ (which can then can be extended to every measurable  $\mathcal{A}\subseteq\cs$).
Since there is some $c$ such that $\overline{P}(\sigma)\leq c\cdot\overline{M}(\sigma)$ for every $\sigma\in\str$, we have for every $n$ that
\[
\dfrac{\overline{P}(X\uh n)}{\overline{M}(X\uh n)}\leq c,
\]and hence by the dominated convergence theorem,
\begin{equation*}\tag{$\ddagger$}
\lim_{n\rightarrow\infty}\int_\A \dfrac{\overline{P}(X\uh n)}{\overline{M}(X\uh n)} d\overline{M}(X)=
\int_\A \lim_{n\rightarrow\infty}\dfrac{\overline{P}(X\uh n)}{\overline{M}(X\uh n)} d\overline{M}(X).
\end{equation*}
Using ($\dagger$), it now suffices to show that $\overline P(\mathcal{A})$ is equal to the left-hand side of ($\ddagger$).  For each sufficiently large $N$, let $\A=\bigcup_{i=1}^k\llbracket\sigma_i\rrbracket$ for distinct $\sigma_1,\dotsc,\sigma_k\in2^N$.  Then

\allowdisplaybreaks

\begin{align}
\lim_{n\rightarrow\infty}\int_\A \dfrac{\overline{P}(X\uh n)}{\overline{M}(X\uh n)} d\overline{M}(X)&=\int_\A \dfrac{\overline{P}(X\uh N)}{\overline{M}(X\uh N)} d\overline{M}(X)\\
&=\sum_{i=1}^k\int_{\llbracket\sigma_i\rrbracket}\dfrac{\overline{P}(X\uh N)}{\overline{M}(X\uh N)} d\overline{M}(X)\\
&=\sum_{i=1}^k\dfrac{\overline{P}(\llbracket\sigma_i\rrbracket)}{\overline{M}(\llbracket\sigma_i\rrbracket)}\overline{M}(\llbracket\sigma_i\rrbracket)\\
&=\sum_{i=1}^k\overline{P}(\llbracket\sigma_i\rrbracket)=\overline{P}(\A).\qedhere
\end{align}
\end{proof}

\begin{lem}[V'yugin \cite{Vyu82}]\label{lem:rand-atom-2}
Let $P$ be a left-c.e.\ semi-measure and suppose that for $\B\subseteq\cs$, we have $\overline{M}(\B_0)=0$, where
\[
\B_0=\Biggl\{X\in\B\colon \frac{d\overline{P}}{d\overline{M}}(X)=0\Biggr\}.
\]
Then $\overline{P}(\B)=0$ implies that $\overline{M}(\B)=0$.
\end{lem}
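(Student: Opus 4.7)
The plan is to exploit the Radon-Nikodym identity recorded just above the lemma, namely $\overline{P}(\X) = \int_\X \frac{d\overline{P}}{d\overline{M}} \, d\overline{M}$ for every measurable $\X$, together with the hypothesis that the Radon-Nikodym derivative vanishes only on an $\overline{M}$-null subset of $\B$. Write $f := \frac{d\overline{P}}{d\overline{M}}$; this nonnegative measurable function exists by the absolute continuity recorded in Remark~\ref{rmk1}~(iii) and the Radon-Nikodym theorem.

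First, decompose $\B = \B_0 \cup (\B \setminus \B_0)$. Since $\overline{M}(\B_0) = 0$ by hypothesis, it suffices to show $\overline{M}(\B \setminus \B_0) = 0$. From the assumption $\overline{P}(\B) = 0$ and the Radon-Nikodym identity applied to $\B$,
\[
0 \,=\, \int_\B f \, d\overline{M} \,=\, \int_{\B_0} f \, d\overline{M} + \int_{\B \setminus \B_0} f \, d\overline{M} \,=\, \int_{\B \setminus \B_0} f \, d\overline{M},
\]
where the last equality uses $\overline{M}(\B_0) = 0$.

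Next, by definition of $\B_0$ we have $f(X) > 0$ for every $X \in \B \setminus \B_0$. Stratify this set by the level sets $\B_n := \{X \in \B \setminus \B_0 : f(X) > 1/n\}$ for $n \geq 1$, so that $\B \setminus \B_0 = \bigcup_{n \geq 1} \B_n$. For each $n$,
\[
0 \,=\, \int_{\B \setminus \B_0} f \, d\overline{M} \,\geq\, \int_{\B_n} f \, d\overline{M} \,\geq\, \tfrac{1}{n} \, \overline{M}(\B_n),
\]
forcing $\overline{M}(\B_n) = 0$. Countable subadditivity then yields $\overline{M}(\B \setminus \B_0) = 0$, and hence $\overline{M}(\B) = 0$.

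There is no serious obstacle here: the argument is essentially the standard measure-theoretic principle that an almost-everywhere positive function whose integral over a set vanishes forces that set to be null. The only ingredients beyond what is already established in the paper are the existence of the Radon-Nikodym derivative and the exhaustion of $\B \setminus \B_0$ by the level sets $\B_n$ needed to convert the vanishing of the integral into the vanishing of the measure.
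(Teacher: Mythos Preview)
Your proof is correct and follows essentially the same approach as the paper's: both reduce to showing $\overline{M}(\B\setminus\B_0)=0$ by observing that $\int_{\B\setminus\B_0} f\,d\overline{M}=0$ with $f>0$ on $\B\setminus\B_0$. The only difference is that you spell out the level-set argument for the standard fact that a nonnegative function with vanishing integral is zero almost everywhere, whereas the paper invokes this fact directly.
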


\begin{proof}
By the hypothesis,
\[
0=\overline P(\B\setminus\B_0)=\int_{\B\setminus\B_0}\frac{d\overline{P}}{d\overline{M}}(X)d\overline{M}(X).
\]
Since $\dfrac{d\overline{P}}{d\overline{M}}(X)\neq 0$ for every $X\in\B\setminus\B_0$, it follows that $\overline{M}(\B\setminus\B_0)=0$.  Thus, $\overline M(\B)=0$.
\end{proof}

\begin{lem}[V'yugin \cite{Vyu82}]\label{lem:rand-atom-3}
Let $\mu$ be a computable measure, and let $\B\subseteq\MLR_\mu$ be such that~$\mu(\B)=0$.  Then $\B$ is negligible.
\end{lem}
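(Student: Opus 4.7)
The plan is to apply Lemma~\ref{lem:rand-atom-2} with the computable measure $\mu$ itself, viewed as a left-c.e.\ semi-measure via the identification $\sigma\mapsto\mu(\llb\sigma\rrb)$. Since this identification gives $\overline{\mu}=\mu$, the hypothesis $\mu(\B)=0$ translates to $\overline{P}(\B)=0$ for this choice of $P$, so the lemma will yield the desired conclusion $\overline{M}(\B)=0$ as soon as I can verify its remaining hypothesis, namely that
\[
\B_0=\Bigl\{X\in\B\colon \frac{d\overline{P}}{d\overline{M}}(X)=0\Bigr\}
\]
is $\overline{M}$-null.

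To handle $\B_0$, I would invoke Lemma~\ref{lem:rand-atom-1} to represent the Radon-Nikodym derivative as the pointwise limit $\lim_{n\to\infty}\mu(X\uh n)/\overline{M}(X\uh n)$ for $\overline{M}$-almost every $X$, and then control this ratio on $\B\subseteq\MLR_\mu$ by means of the Levin-Schnorr theorem (Theorem~\ref{thm-levin-schnorr}). The latter supplies, for each $X\in\MLR_\mu$, a constant $c_X>0$ with $\mu(X\uh n)\geq c_X\cdot M(X\uh n)$ for every~$n$; combined with the immediate inequality $\overline{M}(\sigma)\leq M(\sigma)$ (since $\overline{M}$ is by construction dominated by $M$ on every string), this forces $\mu(X\uh n)/\overline{M}(X\uh n)\geq c_X$ at every level. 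Passing to the limit yields $\frac{d\overline{P}}{d\overline{M}}(X)\geq c_X>0$ for $\overline{M}$-almost every $X\in\B$, so $\B_0$ is $\overline{M}$-null, as required.

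The argument then closes by an appeal to Lemma~\ref{lem:rand-atom-2}, which gives $\overline{M}(\B)=0$ and hence the negligibility of~$\B$. I do not anticipate a real obstacle: the heavy lifting has already been carried out in Lemmata~\ref{lem:rand-atom-1} and~\ref{lem:rand-atom-2}, and the role of the hypothesis $\B\subseteq\MLR_\mu$ is precisely to guarantee that the Radon-Nikodym density of $\mu$ with respect to $\overline{M}$ is pointwise bounded away from zero on $\B$. The only point requiring a moment of care is the almost-everywhere clause in Lemma~\ref{lem:rand-atom-1}: the exceptional set there is itself $\overline{M}$-null and can be absorbed into $\B_0$ without changing its $\overline{M}$-measure.
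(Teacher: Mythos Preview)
Your proposal is correct and follows essentially the same approach as the paper's proof: both use the Levin--Schnorr theorem to bound $\mu(X\uh n)/M(X\uh n)$ away from zero on $\B$, combine this with $\overline{M}\leq M$ and Lemma~\ref{lem:rand-atom-1} to conclude that the Radon--Nikodym derivative $d\mu/d\overline{M}$ is nonzero $\overline{M}$-almost everywhere on $\B$, and then invoke Lemma~\ref{lem:rand-atom-2}. Your treatment of the almost-everywhere exceptional set is slightly more explicit than the paper's, but the argument is the same.
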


\begin{proof}
Since $\B\subseteq\MLR_\mu$, by Theorem \ref{thm-levin-schnorr}, for every $X\in \B$, there is some $c$ such that 
\[
\mu(X\uh n)\geq c\cdot M(X\uh n)
\]
for every $n$.  It follows that for all $n$,
\[
\frac{\mu(X\uh n)}{\overline{M}(X\uh n)}\geq\frac{\mu(X\uh n)}{M(X\uh n)}\geq c.
\]
By Lemma \ref{lem:rand-atom-1}, $\dfrac{d\mu}{d\overline M}(X)\neq 0$ for $\overline{M}$-almost every $X\in\B$, and so by Lemma \ref{lem:rand-atom-2} and the fact that~${\mu(\B)=0}$, it follows that $\B$ is negligible.
\end{proof}

Lastly, we need one further classical result.  Recall that $\A\subseteq\cs$ is a tailset if for all $\sigma\in\str$ and all~$Y\in\cs$ with $\sigma Y\in\A$ we also have that $\tau Y\in\A$ for every $\tau \in 2^{|\sigma|}$. That is, for a tailset~$\A$, modifying a finite initial segment of an infinite binary sequence has no bearing on whether that sequence is an element of $\A$ or not. The following result will only be used in the context of Cantor space; for a proof specific to that setting see Downey and Hirschfeldt~\cite[Theorem~1.2.4]{DowHir10}.
\begin{thm}[Kolmogorov's 0-1 Law]\label{thm:tailset}
If $\A\subseteq\cs$ is a measurable tailset, then $\lambda(\A)=0$ or~${\lambda(\A)=1}$. 
\end{thm}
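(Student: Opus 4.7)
The plan is to reduce the 0-1 dichotomy to a quadratic inequality $\lambda(\A)^2 \geq \lambda(\A)$, obtained from two ingredients: (a)~a ``proportionality'' property saying that a tailset occupies the same relative fraction of every cylinder, and (b)~the standard approximation of measurable sets by finite unions of cylinders.

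First, I would establish the key lemma that for every $\sigma \in \str$,
\[
\lambda(\A \cap \llb\sigma\rrb) = 2^{-|\sigma|}\lambda(\A) = \lambda(\llb\sigma\rrb)\cdot \lambda(\A).
\]
This follows directly from the tailset property: for any two strings $\sigma,\tau \in 2^n$, the map $\sigma Y \mapsto \tau Y$ is a measure-preserving bijection between $\llb\sigma\rrb$ and $\llb\tau\rrb$ that, by the defining property of a tailset, sends $\A \cap \llb\sigma\rrb$ onto $\A \cap \llb\tau\rrb$. Hence all $2^n$ sets $\A \cap \llb\tau\rrb$ with $\tau \in 2^n$ have the same measure, and since they partition $\A$, each has measure $2^{-n}\lambda(\A)$.

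Next, I would invoke the standard fact that the Borel $\sigma$-algebra on $\cs$ is the completion under $\lambda$ of the algebra generated by cylinders: for every measurable $\A$ and every $\varepsilon > 0$, there is a finite disjoint union of cylinders $V = \bigcup_{i \leq k} \llb\sigma_i\rrb$ with $\lambda(\A \bigtriangleup V) < \varepsilon$. Applying the proportionality lemma termwise gives
\[
\lambda(\A \cap V) \;=\; \sum_{i\leq k} \lambda(\A \cap \llb\sigma_i\rrb) \;=\; \lambda(\A)\sum_{i\leq k}\lambda(\llb\sigma_i\rrb) \;=\; \lambda(\A)\cdot\lambda(V).
\]
Combined with $\lambda(V) \leq \lambda(\A) + \varepsilon$ and $\lambda(\A \cap V) \geq \lambda(\A) - \varepsilon$, this yields $\lambda(\A)\bigl(\lambda(\A)+\varepsilon\bigr) \geq \lambda(\A) - \varepsilon$. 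Letting $\varepsilon \to 0$ gives $\lambda(\A)^2 \geq \lambda(\A)$, and since $\lambda(\A) \in [0,1]$, this forces $\lambda(\A) \in \{0,1\}$.

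The main obstacle is really just the verification of the proportionality lemma from the tailset definition given in the paper (which is phrased in terms of the syntactic operation of replacing initial segments rather than as invariance under a measure-preserving group action); once that is in hand, the approximation-and-squaring argument is routine. No appeal to martingales, Fubini, or product measure structure is needed, so the proof is self-contained within the framework already set up in the Background section.
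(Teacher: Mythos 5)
Your proof is correct. The paper itself does not supply a proof of this theorem; it defers to Downey and Hirschfeldt (Theorem~1.2.4), so there is no in-text argument to compare against. Your route is the standard elementary one for Cantor space: the ``proportionality'' identity $\lambda(\A\cap\llb\sigma\rrb)=\lambda(\llb\sigma\rrb)\lambda(\A)$ is precisely the self-independence that drives Kolmogorov's 0-1 law, established here concretely via the measure-preserving cylinder swaps $\sigma Y\mapsto\tau Y$ (which are well-defined bijections between $\A\cap\llb\sigma\rrb$ and $\A\cap\llb\tau\rrb$ by the paper's definition of tailset). The subsequent approximation by finite disjoint unions of cylinders and the squaring trick $\lambda(\A)^2\geq\lambda(\A)$ are both sound, and the whole argument is self-contained without appealing to the Lebesgue density theorem or martingale convergence, both of which are sometimes used in textbook treatments. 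The only point worth spelling out more explicitly is that a finite union of cylinders can always be rewritten as a finite \emph{disjoint} union of cylinders of uniform length, which you implicitly rely on to justify the termwise summation in $\lambda(\A\cap V)=\lambda(\A)\lambda(V)$; this is routine but deserves a sentence.
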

 We can now prove Theorem~\ref{thm:rand-atom}.
\begin{proof}[Proof of Theorem \ref{thm:rand-atom}]
Suppose that $\mathbf{r}=\mathbf{a_0}\vee\mathbf{a_1}$ and $\mathbf{a_0}\wedge\mathbf{a_1}=\mathbf{0}$ for some $\mathbf{a_0},\mathbf{a_1}>\mathbf{0}$.  Let ${\A_0,\A_1\in\mathcal{I}}$ be collections of sequences as given by Lemma~\ref{fact1} where ${\mathbf{deg_\mathbf{LV}(}\A_i\mathbf{)}=\mathbf{a}_i}$ and $\A_i\subseteq(\MLR)^{\equiv_\T }$ for~${i=0,1}$.    Note that for $i=0,1$, for each $X\in\A_i$ there is some $Y\in\MLR\cap \A_i$ such that $X\equiv_\T  Y$. Let us consider the subcollections of sequences $\A_i^*=\MLR\cap \A_i$ for $i=0,1$.  Since each~$\A_i$ is non-negligible, it follows that
\[
\lambda\Biggl(\bigcup_e\Phi^{-1}_e(\A_i)\Biggr)>0
\]
for $i=0,1$.  Since each $X\in\A_i$ is Turing equivalent to some $Y\in\A^*_i$, it follows for $i=0,1$ that 
\[
\bigcup_e\Phi^{-1}_e(\A_i)=\bigcup_e\Phi^{-1}_e(\A^*_i)
\]
and hence
\[
\lambda\Biggl(\bigcup_e\Phi^{-1}_e(\A^*_i)\Biggr)>0.
\]
 Then Proposition~\ref{prop-neg} and Lemma \ref{lem:rand-atom-3} imply that $\lambda(\A_i^*)>0$ for $i=0,1$.  But each ~$\A^*_i$~is a measurable tailset, so by Theorem \ref{thm:tailset} it follows that  
 $\lambda(\A^*_i)=1$ for $i=0,1$, which is impossible as $\A^*_0$ and $\A^*_1$ 
 are disjoint.
\end{proof}

\subsection{Additional results about the $\LV$-degrees}
It is reasonable to ask whether the degree $\mathbf{r}\vee\mathbf{c}$ is 
the top degree in $\D_\LV$.  V'yugin gave a negative answer to this question by proving that the complement of $\mathbf{r}\vee\mathbf{c}$ in $\D_\LV$ is non-negligible.
We will give the details of his proof in Section~\ref{sec-implementing}, where we will provide the first instance of the technique of building semi-measures that we mentioned in the introduction.  
However, in this subsection, we provide a simpler proof of this result, and a number of new results about $\D_\LV$.

 Given $\mathbf{a}\in\mathcal{D}_\LV$ and $\A\subseteq\cs$ such that $\mathbf{deg_\mathbf{LV}(}(\A)^{\equiv_\T }\mathbf{)}=\mathbf{a}$, we say that $\A$ \emph{generates} $\mathbf{a}$ or that $\mathbf{a}$ is the $\LV$-degree \emph{generated by} $\A$.  We will use the following lemma repeatedly.

\begin{samepage}
\begin{lem}\label{lem-negfacts} Let $\A,\B\subseteq \cs$ be measurable sets.
\begin{itemize} 
\item[(i)]  If $\A\setminus\B$ is negligible, then $(\A)^{\equiv_\T }\setminus(\B)^{\equiv_\T }$ is also negligible.  In particular, we have~${(\A)^{\equiv_\T }\leq_\LV(\B)^{\equiv_\T }}$.
\item[(ii)] If $\A\subseteq \B$, then $(\A)^{\equiv_\T }\leq_\LV(\B)^{\equiv_\T }$.
\end{itemize}
\end{lem}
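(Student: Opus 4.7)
The plan is to prove (i) first, and then derive (ii) as the special case where $\A \setminus \B = \emptyset$. The key reduction is to observe that closure under Turing equivalence preserves negligibility: if $\C \subseteq \cs$ is negligible, then $(\C)^{\equiv_\T}$ is negligible as well. Given this, (i) will follow from the containment $(\A)^{\equiv_\T} \setminus (\B)^{\equiv_\T} \subseteq (\A \setminus \B)^{\equiv_\T}$, and the ``in particular'' clause is then immediate from the definition of ${\leq_\LV}$.

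To establish that $\C$ negligible implies $(\C)^{\equiv_\T}$ negligible, I would use Proposition~\ref{prop-neg} and show the equality
\[
\bigcup_{i \in \omega} \Phi_i^{-1}(\C) = \bigcup_{i \in \omega} \Phi_i^{-1}\bigl((\C)^{\equiv_\T}\bigr),
\]
where $(\Phi_i)_{i \in \omega}$ enumerates all Turing functionals. The $\subseteq$-direction is immediate from $\C \subseteq (\C)^{\equiv_\T}$. For the reverse direction, suppose $\Phi_i(X) \in (\C)^{\equiv_\T}$; then there exists $Z \in \C$ with $Z \equiv_\T \Phi_i(X)$, and in particular $Z \leq_\T X$ by transitivity, so by composition with a functional computing $Z$ from $\Phi_i(X)$ there exists some index $j$ with $\Phi_j(X) = Z \in \C$, giving $X \in \Phi_j^{-1}(\C)$. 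Since the left-hand side has $\lambda$-measure zero by Proposition~\ref{prop-neg}, so does the right-hand side, and applying Proposition~\ref{prop-neg} again yields the negligibility of $(\C)^{\equiv_\T}$.

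For the containment $(\A)^{\equiv_\T} \setminus (\B)^{\equiv_\T} \subseteq (\A \setminus \B)^{\equiv_\T}$, suppose $X \in (\A)^{\equiv_\T} \setminus (\B)^{\equiv_\T}$. Pick $Y \in \A$ with $Y \equiv_\T X$. If $Y \in \B$, then $X \equiv_\T Y \in \B$ would place $X$ in $(\B)^{\equiv_\T}$, contradicting the choice of $X$. Hence $Y \in \A \setminus \B$, and $X \equiv_\T Y$ witnesses $X \in (\A \setminus \B)^{\equiv_\T}$. Applying the previous paragraph to $\C := \A \setminus \B$, which is negligible by hypothesis, yields that $(\A)^{\equiv_\T} \setminus (\B)^{\equiv_\T}$ is negligible, and thus $(\A)^{\equiv_\T} \leq_\LV (\B)^{\equiv_\T}$ by definition, proving (i). For (ii), if $\A \subseteq \B$ then $\A \setminus \B = \emptyset$ is vacuously negligible, so (i) applies directly.

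I do not anticipate a genuine obstacle here; the only mildly delicate point is that $(\C)^{\equiv_\T}$ need not be Borel even when $\C$ is (it is in general only analytic), but this does not affect the argument since negligibility is formulated via the measure $\overline{M}$ and Proposition~\ref{prop-neg} is stated for measurable sets, and the sets we work with are all closures under Turing equivalence of measurable sets, hence analytic and therefore universally measurable.
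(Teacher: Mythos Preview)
Your proof is correct and follows essentially the same approach as the paper: both establish the containment $(\A)^{\equiv_\T}\setminus(\B)^{\equiv_\T}\subseteq(\A\setminus\B)^{\equiv_\T}$ and then show that Turing closure preserves negligibility via Proposition~\ref{prop-neg} and composition of Turing functionals, with~(ii) following from~(i) applied to $\A\setminus\B=\emptyset$. The only cosmetic difference is that the paper argues the preservation step by contrapositive through an explicit decomposition $(\A\setminus\B)^{\equiv_\T}=\bigcup_{i,j}\S_{i,j}$, whereas you prove the equality $\bigcup_i\Phi_i^{-1}(\C)=\bigcup_i\Phi_i^{-1}((\C)^{\equiv_\T})$ directly; your packaging is arguably a bit cleaner.
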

\end{samepage}

\begin{proof}
(i) First observe that $(\A)^{\equiv_\T }\setminus(\B)^{\equiv_\T }\subseteq (\A\setminus\B)^{\equiv_\T }$.  Indeed, given $X\in(\A)^{\equiv_\T }\setminus(\B)^{\equiv_\T }$, there is some $Y\equiv_\T X$ such that $Y\in\A$ and for all $Z\in\B$, we have $Z\not\equiv_\T X$.  It follows that~${Y\notin \B}$, and hence $X\in  (\A\setminus\B)^{\equiv_\T }$.

Now suppose that $(\A)^{\equiv_\T }\setminus(\B)^{\equiv_\T }$ is non-negligible.  By the above observation, $(\A\setminus\B)^{\equiv_\T }$ is also non-negligible.  For $i,j\in\omega$ define $\S_{i,j}=\{X\in\cs\colon (\exists Y\in \A\setminus \B)\; (\Phi_i(Y)=X \;\wedge\;\Phi_j(X)=Y)\}$.  Then we have
\[
(\A\setminus\B)^{\equiv_\T }=\bigcup_{(i,j)\in\omega^2}\S_{i,j}.
\]
Since $(\A\setminus\B)^{\equiv_\T }$ is non-negligible, there is some pair $(i,j)\in\omega^2$ such that $\S_{i,j}$ is non-negligible. Then by Proposition \ref{prop-neg}, there is some Turing functional $\Psi$ such that $\lambda(\Psi^{-1}(\S_{i,j}))>0$.  By definition of $\S_{i,j}$, if $\Psi(Z)\in\S_{i,j}$, then $\Phi_j(\Psi(Z))\in\A\setminus\B$. 
Thus $\Psi^{-1}(\S_{i,j})\subseteq (\Phi_j\circ\Psi)^{-1}(\A\setminus\B)$, and so $\lambda((\Phi_j\circ\Psi)^{-1}(\A\setminus\B))>0$.  Thus by Proposition \ref{prop-neg}, $\A\setminus\B$ is not negligible.

\smallskip

\noindent (ii) If $\A\subseteq \B$, then $\A\setminus\B=\emptyset$ is trivially negligible.  Thus by~(i), $(\A)^{\equiv_\T }\leq_\LV(\B)^{\equiv_\T }$.
\end{proof}

It is natural to ask how the $\LV$-degree of the Martin-L\"of random Turing degrees compares to the $\LV$-degrees associated to other notions of algorithmic randomness.   First we show
that the $\LV$-degree of the Schnorr random Turing degrees is also $\mathbf{r}$.

\begin{thm}\label{thm-srvd}
$\mathbf{deg_\mathbf{LV}(}(\SR)^{\equiv_\T }\mathbf{)}=\mathbf{r}$.
\end{thm}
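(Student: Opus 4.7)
The plan is to establish both $\mathbf{r} \leq_\LV \mathbf{deg_\mathbf{LV}(}(\SR)^{\equiv_\T}\mathbf{)}$ and $\mathbf{deg_\mathbf{LV}(}(\SR)^{\equiv_\T}\mathbf{)} \leq_\LV \mathbf{r}$. The first direction is essentially free: since $\MLR \subseteq \SR$, we have $(\MLR)^{\equiv_\T} \subseteq (\SR)^{\equiv_\T}$, so Lemma~\ref{lem-negfacts}(ii) applies immediately.

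For the reverse direction, my plan is to prove the stronger set-theoretic inclusion $\SR \subseteq (\MLR)^{\equiv_\T}$, after which a second application of Lemma~\ref{lem-negfacts}(ii) finishes the job. Pick an arbitrary $X \in \SR$. Since every computable sequence $Y$ lies in the $\Pi^0_1$ class $\{Y\}$ of $\lambda$-measure zero and hence is not even Kurtz random, $X$ must be non-computable. I would then appeal to the (known) fact that every Schnorr random sequence with respect to $\lambda$ is Martin-L\"of random with respect to some computable probability measure $\mu$ on $\cs$. Combined with Theorem~\ref{thm-levin-kautz}, this yields some $Y \in \MLR$ with $X \equiv_\T Y$, so $X \in (\MLR)^{\equiv_\T}$, as desired.

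The main obstacle is justifying the invocation of the fact that every Schnorr random is Martin-L\"of random with respect to some computable measure. This is a non-trivial classical result whose exact statement and attribution would need careful citation from the algorithmic randomness literature. If a clean citation is not available in that form, a fallback strategy would be to show directly that $\SR \setminus (\MLR)^{\equiv_\T}$ is negligible: decompose $\SR$ as a countable union $\bigcup_e \SR \cap \MLR_{\mu_e}$ over computable measures $\mu_e$ and apply Theorem~\ref{thm-levin-kautz} piecewise, combined with the observation that no computable sequence is Schnorr random, to see that each piece is already contained in $(\MLR)^{\equiv_\T}$; Lemma~\ref{lem:rand-atom-3} can then be used to handle any residual pieces that are not ML-random for the originally chosen measure.
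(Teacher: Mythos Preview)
Your first direction matches the paper exactly. The problem is entirely in the reverse direction.

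Your main plan hinges on the assertion that every Schnorr random sequence is Martin-L\"of random with respect to some computable measure. This is not a standard result in the literature, and I do not believe you will find a clean citation for it; in fact the set-theoretic inclusion $\SR \subseteq (\MLR)^{\equiv_\T}$ you are aiming for is likely false. By Theorem~\ref{thm-levin-kautz}, it is equivalent to asserting that every Schnorr random Turing degree contains a Martin-L\"of random sequence, and the known separations between Schnorr and Martin-L\"of randomness at the level of degrees make this doubtful. Your fallback does not escape the difficulty: writing $\SR = \bigcup_e \SR \cap \MLR_{\mu_e}$ already presupposes that every Schnorr random is $\mu$-ML-random for some computable $\mu$, which is exactly the claim in question, and Lemma~\ref{lem:rand-atom-3} cannot ``handle residual pieces'' that lie outside every $\MLR_{\mu_e}$ since its hypothesis requires $\B \subseteq \MLR_\mu$.

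The paper's argument avoids this entirely by proving only the weaker statement that $\SR \setminus \MLR$ is \emph{negligible}, which is all that Lemma~\ref{lem-negfacts}(i) requires. The key input is the theorem of Nies, Stephan, and Terwijn that every sequence in $\SR \setminus \MLR$ has high Turing degree, combined with Proposition~\ref{prop-negligible}(ii), which says the collection of high sequences is negligible. This gives the result in two lines without needing any inclusion of $\SR$ into $(\MLR)^{\equiv_\T}$.
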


\begin{proof}
($\geq_\LV$:) $\MLR\subseteq\SR$, and thus 
by Lemma \ref{lem-negfacts} (ii), $(\MLR)^{\equiv_\T }\leq_\LV(\SR)^{\equiv_\T }$.  \\
($\leq_\LV$:)  We show that $\mathrm{SR}\setminus\mathrm{MLR}$ is negligible, which by  Lemma \ref{lem-negfacts} (i) implies $(\SR)^{\equiv_\T }\leq_\LV(\MLR)^{\equiv_\T }$.  
As shown by Nies, Stephan, and Terwijn \cite{NieSteTer05}, every $X\in\SR\setminus\MLR$ has high degree.  But by Proposition \ref{prop-negligible}, the collection of sequences of high degree is negligible.  
\end{proof}

\begin{corollary}\label{cor-srvd}
Let $\mathrm{R}$ be any notion of algorithmic randomness such that $\MLR\subseteq\mathrm{R}\subseteq\SR$.  Then \[\mathbf{deg_\mathbf{LV}(}(\mathrm{R})^{\equiv_\T }\mathbf{)}=\mathbf{r}.\]
\end{corollary}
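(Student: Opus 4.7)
The plan is to sandwich the $\LV$-degree of $(\mathrm{R})^{\equiv_\T}$ between two degrees that have already been identified as $\mathbf{r}$, and thereby pin it down by antisymmetry of $\leq_\LV$ on $\D_\LV$. Concretely, I would exploit the two-sided containment $\MLR \subseteq \mathrm{R} \subseteq \SR$ directly, with no additional construction required.

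First I would invoke Lemma~\ref{lem-negfacts}~(ii) twice. From $\MLR \subseteq \mathrm{R}$ we get $(\MLR)^{\equiv_\T} \leq_\LV (\mathrm{R})^{\equiv_\T}$, and from $\mathrm{R} \subseteq \SR$ we get $(\mathrm{R})^{\equiv_\T} \leq_\LV (\SR)^{\equiv_\T}$. Chaining these yields
\[
(\MLR)^{\equiv_\T} \;\leq_\LV\; (\mathrm{R})^{\equiv_\T} \;\leq_\LV\; (\SR)^{\equiv_\T}.
\]
Now $\mathbf{deg_\mathbf{LV}}((\MLR)^{\equiv_\T}) = \mathbf{r}$ holds by definition of $\mathbf{r}$ (recalling that $\MLR$ is already Turing invariant, so the Turing closure is vacuous), and $\mathbf{deg_\mathbf{LV}}((\SR)^{\equiv_\T}) = \mathbf{r}$ is the content of Theorem~\ref{thm-srvd}. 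Hence both endpoints of the chain are equal in $\D_\LV$, which forces $\mathbf{deg_\mathbf{LV}}((\mathrm{R})^{\equiv_\T}) = \mathbf{r}$ as well.

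There is no real obstacle here; the corollary is essentially a monotonicity observation built on top of Theorem~\ref{thm-srvd}. The only point worth a moment of care is to make sure that $(\mathrm{R})^{\equiv_\T}$ is indeed measurable so that it has a well-defined $\LV$-degree, but since every natural randomness class between $\MLR$ and $\SR$ is Borel (and its Turing closure is then $\boldsymbol{\Sigma}^1_1$, hence measurable), this is automatic in any application of the corollary.
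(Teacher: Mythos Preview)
Your proof is correct and follows exactly the same route as the paper: apply Lemma~\ref{lem-negfacts}~(ii) to the inclusions $\MLR\subseteq\mathrm{R}\subseteq\SR$ to sandwich $\mathbf{deg_\mathbf{LV}}((\mathrm{R})^{\equiv_\T})$ between $\mathbf{deg_\mathbf{LV}}((\MLR)^{\equiv_\T})=\mathbf{r}$ and $\mathbf{deg_\mathbf{LV}}((\SR)^{\equiv_\T})=\mathbf{r}$ via Theorem~\ref{thm-srvd}. The paper's proof is the one-line version of what you wrote; your additional remark about measurability is a fair caveat but not part of the paper's argument.
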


\begin{proof}  
By Lemma  \ref{lem-negfacts} (ii) and Theorem \ref{thm-srvd}, we have 
\[
\mathbf{r}=\mathbf{deg_\mathbf{LV}(}(\MLR)^{\equiv_\T }\mathbf{)}\leq_\LV	\mathbf{deg_\mathbf{LV}(}(\mathrm{R})^{\equiv_\T }\mathbf{)}\leq_\LV\mathbf{deg_\mathbf{LV}(}(\SR)^{\equiv_\T }\mathbf{)}=\mathbf{r}.\qedhere
\]
\end{proof}

Thus, notions of randomness such as computable randomness, Kolmogorov-Loveland randomness, and the non-monotonic randomness notions studied in Bienvenu et al.~\cite{BieHolKra12} all are of $\LV$-degree $\mathbf{r}$.  Similar results hold for notions of randomness stronger than Martin-L\"of randomness, as the following result shows.

\begin{thm}\label{thm-relrvd}
For every $Z\in\cs$, $\mathbf{deg_\mathbf{LV}(}(\MLR^Z)^{\equiv_\T }\mathbf{)}=\mathbf{deg_\mathbf{LV}(}(\MLR)^{\equiv_\T }\mathbf{)}$.
\end{thm}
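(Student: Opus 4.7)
The plan is to establish the equality by proving the two inequalities separately. The direction $(\MLR^Z)^{\equiv_\T } \leq_\LV (\MLR)^{\equiv_\T }$ is immediate: since relativization can only strengthen randomness, we have $\MLR^Z \subseteq \MLR$, and hence Lemma \ref{lem-negfacts}~(ii) gives the inequality at once. So the real content lies in the reverse direction $(\MLR)^{\equiv_\T } \leq_\LV (\MLR^Z)^{\equiv_\T }$, which by Lemma \ref{lem-negfacts}~(i) reduces to showing that $\MLR \setminus \MLR^Z$ is negligible.

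First I would observe that $\MLR^Z$ has Lebesgue measure $1$ (the usual test-based definition exhibits its complement as a Lebesgue null set, uniformly in $Z$), so that
\[
\lambda(\MLR \setminus \MLR^Z) \leq \lambda(\cs \setminus \MLR^Z) = 0.
\]
Thus $\MLR \setminus \MLR^Z$ is a Lebesgue null subset of $\MLR = \MLR_\lambda$. Now I would invoke Lemma \ref{lem:rand-atom-3} with the computable measure $\mu = \lambda$ and $\B = \MLR \setminus \MLR^Z$ to conclude that $\B$ is negligible. Applying Lemma \ref{lem-negfacts}~(i) to $\A = \MLR$ and $\B = \MLR^Z$ then yields that $(\MLR)^{\equiv_\T } \setminus (\MLR^Z)^{\equiv_\T }$ is negligible, which is precisely the remaining inequality.

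I do not expect any genuine obstacle here, as the argument is essentially a two-line application of the machinery already developed in the previous subsection; the key conceptual point is that Lemma \ref{lem:rand-atom-3} upgrades a trivial Lebesgue-measure observation to the much stronger statement of negligibility by exploiting that the difference $\MLR \setminus \MLR^Z$ lives entirely inside the class of Martin-L\"of randoms with respect to the computable measure $\lambda$. Combining both inequalities gives the desired equality of $\LV$-degrees.
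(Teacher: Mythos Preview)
Your proof is correct but takes a genuinely different route from the paper's. Both arguments handle the easy direction $(\MLR^Z)^{\equiv_\T } \leq_\LV (\MLR)^{\equiv_\T }$ identically via Lemma~\ref{lem-negfacts}~(ii). For the reverse direction, the paper invokes the \emph{XYZ} Theorem of Miller and Yu: if $X\in\MLR$ and $X\leq_\T Y\in\MLR^Z$, then $X\in\MLR^Z$. Contrapositively, no $Z$-random sequence computes any element of $\MLR\setminus\MLR^Z$, and negligibility follows from the heuristic~($P_2$). Your argument instead observes that $\MLR\setminus\MLR^Z$ is a Lebesgue-null subset of $\MLR_\lambda$ and applies Lemma~\ref{lem:rand-atom-3} directly. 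Your route is more self-contained, since it avoids the external Miller--Yu citation and reuses the Radon--Nikodym machinery already set up for Theorem~\ref{thm:rand-atom}; the paper's route, on the other hand, stays within the ``sufficiently random sequences compute/don't compute'' paradigm of~($P_1$) and~($P_2$) that organizes the surrounding section.
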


\begin{proof}
($\geq_\LV$:) $\MLR^Z\subseteq\MLR$, and so by Lemma \ref{lem-negfacts} (ii), $(\MLR^Z)^{\equiv_\T }\leq_\LV(\MLR)^{\equiv_\T }$.  \\
($\leq_\LV$:)  We show that $\mathrm{MLR}\setminus\mathrm{MLR^Z}$ is negligible and apply Lemma \ref{lem-negfacts} (i).
Given any ${X\in\MLR\setminus\MLR^Z}$, by the \emph{XYZ} Theorem of Miller and Yu \cite{MilYu08}, if $X\leq_\T  Y\in\MLR^Z$, then~$X\in\MLR^Z$.  Thus no $Y\in\MLR^Z$ computes any $X\in\MLR\setminus\MLR^Z$.  That is, no sufficiently random sequence computes an element of $\MLR\setminus\MLR^Z$, and so by our heuristic~($P_2$), this latter collection is negligible.
\end{proof}

An immediate consequence of Theorem \ref{thm-relrvd} is that for each $n\in\omega$, the $\LV$-degree of the collection of $n$-random sequences is $\mathbf{r}$.  Another consequence is the following, the proof of which is analogous to that of Corollary \ref{cor-srvd}.

\begin{corollary}\label{cor-strong}
Let $\mathrm{R}$ be any notion of algorithmic randomness such that $\MLR^{\emptyset'}\subseteq\mathrm{R}\subseteq\MLR$.  Then \[\mathbf{deg_\mathbf{LV}(}(\mathrm{R})^{\equiv_\T }\mathbf{)}=\mathbf{r}.\]
\end{corollary}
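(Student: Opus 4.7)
The plan is to mimic exactly the structure of the proof of Corollary~\ref{cor-srvd}, but using Theorem~\ref{thm-relrvd} in place of Theorem~\ref{thm-srvd} to pin down both endpoints of a squeeze. The key observation is that both $\MLR^{\emptyset'}$ and $\MLR$ have already been shown, directly or indirectly, to generate the $\LV$-degree $\mathbf{r}$, so any class of sequences sandwiched between them must do so as well by monotonicity of $\leq_\LV$ with respect to inclusion.

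Concretely, I would argue as follows. By hypothesis, $\MLR^{\emptyset'} \subseteq \mathrm{R} \subseteq \MLR$, and therefore by Lemma~\ref{lem-negfacts}~(ii) applied to each inclusion,
\[
(\MLR^{\emptyset'})^{\equiv_\T} \leq_\LV (\mathrm{R})^{\equiv_\T} \leq_\LV (\MLR)^{\equiv_\T}.
\]
The right-hand side equals $\mathbf{r}$ by definition of $\mathbf{r}$. For the left-hand side, I would invoke Theorem~\ref{thm-relrvd} with $Z=\emptyset'$, which yields $\mathbf{deg_\mathbf{LV}}((\MLR^{\emptyset'})^{\equiv_\T}) = \mathbf{deg_\mathbf{LV}}((\MLR)^{\equiv_\T}) = \mathbf{r}$. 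Both ends of the chain being $\mathbf{r}$ then forces $\mathbf{deg_\mathbf{LV}}((\mathrm{R})^{\equiv_\T})=\mathbf{r}$.

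There is essentially no obstacle here: the whole content of the corollary is packaged into the prior theorem, and all that remains is a two-line squeeze. The only small subtlety worth verifying is that $\mathrm{R}$ is measurable, so that $(\mathrm{R})^{\equiv_\T}$ makes sense as an element of $\mathcal{I}$ and Lemma~\ref{lem-negfacts}~(ii) applies. This is implicit in calling $\mathrm{R}$ a notion of algorithmic randomness (in practice, any such notion is defined by a countable family of effective tests and is Borel), so no further work is needed.
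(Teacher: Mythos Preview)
Your proposal is correct and matches the paper's approach exactly: the paper simply states that the proof is analogous to that of Corollary~\ref{cor-srvd}, which amounts to the same squeeze via Lemma~\ref{lem-negfacts}~(ii) and Theorem~\ref{thm-relrvd} with $Z=\emptyset'$ that you wrote out.
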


It follows that notions of randomness such as difference randomness, Demuth randomness, and weak 2-randomness all generate the $\LV$-degree $\mathbf{r}$.  

We now show that $\mathbf{r}\vee\mathbf{c}$ is not the top $\LV$-degree by exhibiting an $\LV$-degree that is incomparable with it.  Let $\mathbf{g}$ be the $\LV$-degree generated by the collection of 1-generic sequences.
By Proposition~\ref{prop-non-negligible} this collection is non-negligible. 

\begin{prop}\label{prop-r-g}
{\ }
\begin{itemize}
\item[(i)] $\mathbf{r}\wedge\mathbf{g}=\mathbf{0}$, and hence $\mathbf{r},\mathbf{g}<_\LV\mathbf{r}\vee\mathbf{g}$.
\item[(ii)] $(\mathbf{r}\vee\mathbf{c})\wedge\mathbf{g}=\mathbf{0}$.
\item[(iii)] $\mathbf{r}\wedge(\mathbf{g}\vee\mathbf{c})=\mathbf{0}$.
\end{itemize}
\end{prop}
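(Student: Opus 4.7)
The plan is to show each of the three meets is the bottom $\LV$-degree by verifying that, for suitable Turing-invariant representatives, the corresponding set-theoretic intersections are actually \emph{empty}, and then to deduce the strict inequalities in the ``hence'' clause of~(i) from standard Boolean-algebra reasoning. Concretely, I would fix the representatives $\mathcal{R}=(\MLR)^{\equiv_\T}\in\mathbf{r}$, $\mathcal{C}=\{X\in\cs\colon X\text{ computable}\}\in\mathbf{c}$, and $\mathcal{G}=(\text{1-gen})^{\equiv_\T}\in\mathbf{g}$; note that $\mathcal{C}$ is already closed under Turing equivalence. The joins and meets can then be computed representative-wise, so it suffices to analyse $\mathcal{R}\cap\mathcal{G}$, $(\mathcal{R}\cup\mathcal{C})\cap\mathcal{G}$, and $\mathcal{R}\cap(\mathcal{G}\cup\mathcal{C})$.

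The heart of the argument is part~(i). Suppose $X\in\mathcal{R}\cap\mathcal{G}$. Then $X\equiv_\T Y$ for some Martin-L\"of random $Y$, so $X$ computes~$Y$; and $X\equiv_\T Z$ for some 1-generic~$Z$, so $Z$ also computes $Y$. This contradicts the classical fact (already used in the introduction of this paper) that no 1-generic sequence computes any Martin-L\"of random sequence. Hence $\mathcal{R}\cap\mathcal{G}=\emptyset$, which is trivially negligible, giving $\mathbf{r}\wedge\mathbf{g}=\mathbf{0}$. For the ``hence'' clause, I~use that in any Boolean algebra $\mathbf{a}<\mathbf{a}\vee\mathbf{b}$ is equivalent to $\mathbf{b}\not\leq\mathbf{a}$: if one had $\mathbf{g}\leq\mathbf{r}$, then $\mathbf{g}=\mathbf{g}\wedge\mathbf{r}=\mathbf{0}$, contradicting the non-negligibility of the 1-generics (Proposition~\ref{prop-non-negligible}(ii)); the symmetric argument gives $\mathbf{r}\not\leq\mathbf{g}$ since $\mathbf{r}\neq\mathbf{0}$.

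Parts~(ii) and~(iii) then fall out with no additional work once two easy disjointness observations are recorded: no 1-generic is computable, so $\mathcal{C}\cap\mathcal{G}=\emptyset$; and no Martin-L\"of random is computable, so $\mathcal{C}\cap\mathcal{R}=\emptyset$. Combining the first with~(i) gives
\[(\mathcal{R}\cup\mathcal{C})\cap\mathcal{G}=(\mathcal{R}\cap\mathcal{G})\cup(\mathcal{C}\cap\mathcal{G})=\emptyset,\]
establishing~(ii); combining the second with~(i) gives
\[\mathcal{R}\cap(\mathcal{G}\cup\mathcal{C})=(\mathcal{R}\cap\mathcal{G})\cup(\mathcal{R}\cap\mathcal{C})=\emptyset,\]
establishing~(iii).

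There is no real obstacle beyond invoking the non-computation theorem ``no 1-generic computes an MLR''; all the remaining content is bookkeeping in a Boolean algebra and the trivial observations that 1-generics and MLRs are non-computable, so that Turing-equivalence closures of these collections are disjoint from~$\mathcal{C}$.
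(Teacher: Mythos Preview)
Your proposal is correct and follows essentially the same approach as the paper: both invoke the Demuth--Ku\v{c}era result that no 1-generic computes a Martin-L\"of random to conclude that the Turing-degree spectra of $\MLR$ and of the 1-generics are disjoint, and then obtain~(ii) and~(iii) from~(i) together with the observation that computable sequences share no Turing degree with either class. Your write-up is simply more explicit about the Boolean-algebra bookkeeping for the ``hence'' clause than the paper's one-line ``immediately follows.''
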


\begin{proof}
(i) As shown by Demuth and Ku\v cera \cite{DemKuc87}, no 1-generic can compute a Martin-L\"of random sequence.  Thus the set of Turing degrees containing a Martin-L\"of random sequence is disjoint from the set of Turing degrees containing a 1-generic sequence, from which the first part of~(i) follows.  The second part of~(i) immediately follows from the first part.
Statements~(ii) and~(iii) follow from~(i) and the fact that the collection of computable sequences is disjoint from the collection of 1-generic sequences and from the collection of Martin-L\"of random sequences. 
\end{proof}

\begin{corollary}
Neither $\mathbf{r}\vee\mathbf{c}$ nor $\mathbf{g}\vee\mathbf{c}$ equals the top $\LV$-degree $\mathbf{1}$.
\end{corollary}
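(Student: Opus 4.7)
The plan is to derive this directly from Proposition~\ref{prop-r-g}, using only the elementary Boolean-algebraic observation that in any Boolean algebra, if $\mathbf{a} \wedge \mathbf{b} = \mathbf{0}$ and $\mathbf{b} > \mathbf{0}$, then $\mathbf{a} \neq \mathbf{1}$ (because $\mathbf{1} \wedge \mathbf{b} = \mathbf{b}$). The two halves of the corollary will be handled symmetrically, each using a different part of the preceding proposition.

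First I would address $\mathbf{r}\vee\mathbf{c} \neq \mathbf{1}$. I~would invoke part~(ii) of Proposition~\ref{prop-r-g}, which gives $(\mathbf{r}\vee\mathbf{c})\wedge\mathbf{g} = \mathbf{0}$, together with the fact that $\mathbf{g} > \mathbf{0}$. The latter holds because $\mathbf{g}$ is, by definition, generated by the collection of $1$-generic sequences, which Proposition~\ref{prop-non-negligible}~(ii) established to be non-negligible. Applying the Boolean-algebraic observation above with $\mathbf{a} = \mathbf{r}\vee\mathbf{c}$ and $\mathbf{b} = \mathbf{g}$ then yields the desired conclusion.

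The argument for $\mathbf{g}\vee\mathbf{c} \neq \mathbf{1}$ is entirely analogous: apply part~(iii) of Proposition~\ref{prop-r-g} to get $\mathbf{r}\wedge(\mathbf{g}\vee\mathbf{c}) = \mathbf{0}$, and note that $\mathbf{r} > \mathbf{0}$ because $\MLR$ has positive Lebesgue measure and hence is non-negligible (arbitrary subsets of $\cs$ of positive Lebesgue measure were noted earlier in Section~\ref{sec-neg-nonneg} to be trivially non-negligible). Then the same observation, now with $\mathbf{a} = \mathbf{g}\vee\mathbf{c}$ and $\mathbf{b} = \mathbf{r}$, gives $\mathbf{g}\vee\mathbf{c} \neq \mathbf{1}$.

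There is no real obstacle here: the whole content is packaged into Proposition~\ref{prop-r-g} and the non-triviality of $\mathbf{r}$ and $\mathbf{g}$. The only thing worth being slightly careful about is not to inadvertently suggest that $\mathbf{r}\vee\mathbf{c}$ and $\mathbf{g}\vee\mathbf{c}$ are distinct degrees or are comparable to one another; the corollary makes no such claim, and the proof should merely rule out their equality with $\mathbf{1}$.
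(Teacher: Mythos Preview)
Your proposal is correct and matches the paper's intended argument: the paper states the corollary immediately after Proposition~\ref{prop-r-g} without an explicit proof, treating it as a direct consequence of parts~(ii) and~(iii) together with the non-negligibility of the generating collections, exactly as you spell out.
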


Let $\mathbf{h}$ be the $\LV$-degree of the collection of sequences of hyperimmune degree, which is non-negligible by Proposition \ref{prop-non-negligible}.  

\begin{remark}\label{rmk-hyp-vd}
As shown by Kurtz, a Turing degree is hyperimmune if and only if it contains a weakly 1-generic sequence, where a sequence is weakly 1-generic if for every dense c.e.\ $S\subseteq \str$, there is some~$\sigma\prec X$ such that $\sigma\in S$.  Here $S\subseteq \str$ is called {\em dense} if every element of $\str$ has an extension in~$S$. If we write the collection of weakly 1-generic sequences as  $\mathrm{W1GEN}$ we have~$\mathbf{h}=\mathbf{deg_\mathbf{LV}(}(\mathrm{W1GEN})^{\equiv_\T }\mathbf{)}$.  
\end{remark}

An additional characterization of $\mathbf{h}$ can be given in terms of the collection $\mathrm{KR}$ of Kurtz random sequences.

\begin{prop}
$\mathbf{h}=\mathbf{deg_\mathbf{LV}(}(\mathrm{KR})^{\equiv_\T }\mathbf{)}$.
\end{prop}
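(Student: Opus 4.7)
My plan is to establish the two $\LV$-inequalities $(\mathrm{W1GEN})^{\equiv_\T} \leq_\LV (\mathrm{KR})^{\equiv_\T}$ and $(\mathrm{KR})^{\equiv_\T} \leq_\LV (\mathrm{W1GEN})^{\equiv_\T}$ separately; combined with Remark~\ref{rmk-hyp-vd}, this yields $\mathbf{deg_\mathbf{LV}(}(\mathrm{KR})^{\equiv_\T }\mathbf{)} = \mathbf{h}$.

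For $(\mathrm{W1GEN})^{\equiv_\T} \leq_\LV (\mathrm{KR})^{\equiv_\T}$, I would rely on the standard inclusion $\mathrm{W1GEN} \subseteq \mathrm{KR}$ and apply Lemma~\ref{lem-negfacts}(ii). To verify the inclusion, given a $\Pi^0_1$ class $P$ of measure zero, write its complement as $\llb S \rrb$ for a c.e.\ $S \subseteq \str$. Replacing $S$ by the upward closure $S^* = \{\sigma \in \str : \exists \tau \preceq \sigma,\; \tau \in S\}$ does not change the associated $\Sigma^0_1$ class; but the fact that $\llb S^* \rrb$ has full measure inside every cylinder $\llb\sigma\rrb$ forces $S^*$ to be dense in $\str$. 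Any weakly 1-generic $X$ then meets $S^*$ and so lies outside $P$, showing $X \in \mathrm{KR}$.

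For the reverse direction, the plan is to show that $(\mathrm{KR})^{\equiv_\T} \setminus (\mathrm{W1GEN})^{\equiv_\T}$ is negligible by an application of heuristic $(P_2)$. Using Remark~\ref{rmk-hyp-vd} to identify $(\mathrm{W1GEN})^{\equiv_\T}$ with the collection of sequences of hyperimmune degree, and noting that no computable sequence is Kurtz random, the elements of this difference are exactly the non-computable sequences of hyperimmune-free degree that happen to be Turing equivalent to some Kurtz random. Now invoke the Barmpalias-Day-Lewis-Pye theorem already used in the proof of Proposition~\ref{prop-negligible}(iv): for every 2-random $X$, every non-computable $Y \leq_\T X$ computes a 1-generic and hence has hyperimmune degree. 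Consequently, no 2-random computes a non-computable sequence of hyperimmune-free degree, and in particular no 2-random computes any element of $(\mathrm{KR})^{\equiv_\T} \setminus (\mathrm{W1GEN})^{\equiv_\T}$. Since the 2-randoms form a set of full Lebesgue measure, heuristic $(P_2)$, made rigorous via Proposition~\ref{prop-neg}, forces this difference to be negligible.

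I expect no serious obstacle: both ingredients---the classical ``weakly 1-generic implies Kurtz random'' implication and the Barmpalias-Day-Lewis-Pye bound on computations from $2$-randoms---are either folklore or already cited in the paper, and the combinatorial content of the argument is minimal.
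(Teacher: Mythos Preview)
Your proof is correct, and for the easier direction $(\mathrm{W1GEN})^{\equiv_\T} \leq_\LV (\mathrm{KR})^{\equiv_\T}$ it coincides with the paper's argument.

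For the harder direction $(\mathrm{KR})^{\equiv_\T} \leq_\LV (\mathrm{W1GEN})^{\equiv_\T}$, however, you take a genuinely different and more economical route. The paper first invokes an unpublished result of Yu (every Kurtz random of hyperimmune-free degree is weakly 2-random), then observes that such sequences cannot be 2-random (since 2-randoms have hyperimmune degree), and finally appeals to Corollary~\ref{cor-strong} to conclude that $\mathrm{W2R}\setminus\mathrm{2R}$ is negligible. You bypass all of this: once one identifies $(\mathrm{W1GEN})^{\equiv_\T}$ with the hyperimmune degrees via Remark~\ref{rmk-hyp-vd} and notes that no computable sequence lies in $(\mathrm{KR})^{\equiv_\T}$, the difference $(\mathrm{KR})^{\equiv_\T}\setminus(\mathrm{W1GEN})^{\equiv_\T}$ is visibly contained in the non-computable hyperimmune-free sequences, already shown negligible in Proposition~\ref{prop-negligible}(iv). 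Your argument thus needs only the Barmpalias--Day--Lewis-Pye result already used earlier in the paper, whereas the paper's proof additionally requires Yu's theorem and the machinery behind Corollary~\ref{cor-strong}. The trade-off is that the paper's route extracts the finer structural fact that Kurtz randoms outside the hyperimmune degrees are weakly 2-random, which your argument does not see.
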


\begin{proof}
($\leq_\LV$:) 
 Since every weakly 1-generic sequence is Kurtz random, by Lemma~\ref{lem-negfacts}~(ii) we have
\[\mathbf{deg_\mathbf{LV}(}(\mathrm{W1GEN})^{\equiv_\T }\mathbf{)}\leq_\LV\mathbf{deg_\mathbf{LV}(}(\mathrm{KR})^{\equiv_\T }\mathbf{)}.\]

\noindent ($\geq_\LV$:) We need to show that the collection of Kurtz random sequences that do not have hyperimmune degree is negligible.
As shown by Yu in unpublished work (see Downey and Hirschfeldt~\cite[Theorem 8.11.12]{DowHir10}), every Kurtz random sequence of hyperimmune-free degree is weakly $2$-random.  Since every 2-random 
sequence has hyperimmune degree, such a sequence must be weakly $2$\nobreakdash-random and not $2$-random.
By Corollary \ref{cor-strong}, the collection of weakly $2$-random sequences that are not $2$-random is negligible, from which the conclusion
follows.
\end{proof}

Since the collection of Kurtz random sequences includes every Martin-L\"of random sequence and every 1-generic sequence, we obtain the following result.

\begin{prop}\label{prop-rgh}
$\mathbf{r}<_\LV\mathbf{h}$ and $\mathbf{g}<_\LV\mathbf{h}$.
\end{prop}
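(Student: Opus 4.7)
The plan is to prove each inequality in two parts: first the $\leq_\LV$ direction, then strictness. Both $\leq_\LV$ assertions follow immediately from the preceding proposition, which identifies $\mathbf{h}$ with the $\LV$-degree of $(\mathrm{KR})^{\equiv_\T}$, combined with Lemma \ref{lem-negfacts}(ii). All that is required are the inclusions $\MLR\subseteq\mathrm{KR}$ (every $\Pi^0_1$ class of measure zero is itself a Martin-L\"of test) and ``1-generic implies Kurtz random''. The latter deserves a brief verification: if $\mathcal{P}$ is a $\Pi^0_1$ class with $\lambda(\mathcal{P})=0$, then the complementary $\Sigma^0_1$ open set $\cs\setminus\mathcal{P}$ meets every cylinder, since no cylinder is $\lambda$-null; hence the associated c.e.\ set of strings is dense, and any 1-generic (being weakly 1-generic) has an initial segment forcing membership in $\cs\setminus\mathcal{P}$.

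For the strict parts, the strategy in each case is to exhibit a non-negligible, Turing-invariant subcollection of $(\mathrm{KR})^{\equiv_\T}$ that is disjoint from the target. For $\mathbf{r}<_\LV\mathbf{h}$, take $(\mathrm{GEN})^{\equiv_\T}$, where $\mathrm{GEN}$ denotes the collection of 1-generic sequences: it is non-negligible by Proposition \ref{prop-non-negligible}, it is contained in $(\mathrm{KR})^{\equiv_\T}$ by the inclusion above, and it is disjoint from $(\MLR)^{\equiv_\T}$. This last disjointness is the Demuth--Ku\v cera result cited in Proposition \ref{prop-r-g}: since no 1-generic computes any Martin-L\"of random sequence, no single sequence can be simultaneously Turing-equivalent to a 1-generic and to a Martin-L\"of random one. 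Consequently $(\mathrm{KR})^{\equiv_\T}\setminus(\MLR)^{\equiv_\T}$ is non-negligible, which is precisely $\mathbf{h}\not\leq_\LV\mathbf{r}$.

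The argument for $\mathbf{g}<_\LV\mathbf{h}$ is symmetric, with the roles swapped: $(\MLR)^{\equiv_\T}$ is non-negligible (trivially, since $\MLR$ has full Lebesgue measure), is contained in $(\mathrm{KR})^{\equiv_\T}$, and is disjoint from $(\mathrm{GEN})^{\equiv_\T}$ by the same Demuth--Ku\v cera theorem. This witnesses $\mathbf{h}\not\leq_\LV\mathbf{g}$. There is no serious obstacle here; once the preceding proposition supplies the description of $\mathbf{h}$ via Kurtz random sequences, the entire argument reduces to bookkeeping built on the disjointness of the Turing cones of $\MLR$ and of the 1-generics.
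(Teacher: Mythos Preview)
Your proof is correct and follows essentially the same route as the paper's: both use the inclusions $\MLR\subseteq\mathrm{KR}$ and $\mathrm{1GEN}\subseteq\mathrm{KR}$ together with Lemma~\ref{lem-negfacts}(ii) for the non-strict inequalities, and then witness strictness via the non-negligible $1$-generics (respectively, Martin-L\"of randoms) lying inside $\mathrm{KR}$ but outside the other class. You are somewhat more explicit than the paper in invoking Demuth--Ku\v cera to ensure disjointness at the level of Turing-invariant closures, but the underlying argument is the same.
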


\begin{proof}
Since $\MLR\subseteq\mathrm{KR}$ and $\mathrm{1GEN}\subseteq\mathrm{KR}$, by Lemma \ref{lem-negfacts} (ii) we have $\mathbf{r}\leq_\LV\mathbf{h}$ and $\mathbf{g}\leq_\LV\mathbf{h}$.
Moreover, $\mathrm{1GEN}\subseteq\mathrm{KR}\setminus\MLR$, so this latter collection is non-negligible, which implies $\mathbf{r}<_\LV\mathbf{h}$.  Similarly, ${\MLR\subseteq\mathrm{KR}\setminus\mathrm{1GEN}}$ implies 
$\mathbf{g}<_\LV\mathbf{h}$.
\end{proof}

$\mathbf{h}<_\LV\mathbf{h}\vee\mathbf{c}$, as the collection of computable sequences is disjoint from the collection of sequences of hyperimmune degree.  
In fact, $\mathbf{h}\vee\mathbf{c}$ can be identified as the top $\LV$-degree.

\begin{prop}\label{prop-hyp-top}
$\mathbf{h}\vee\mathbf{c}=\mathbf{1}$.
\end{prop}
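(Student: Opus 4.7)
The plan is to unpack the claim directly in terms of the underlying collections and then invoke Proposition~\ref{prop-negligible}~(iv). Write $\mathrm{HI}$ for the collection of sequences of hyperimmune degree and $\mathrm{COMP}$ for the collection of computable sequences. Both $\mathrm{HI}$ and $\mathrm{COMP}$ are already closed under Turing equivalence (being of hyperimmune degree, or of computable degree, is a property of the Turing degree). Hence $\mathbf{h}\vee\mathbf{c}$ is the $\LV$-degree of $\mathrm{HI}\cup\mathrm{COMP}$, so $\mathbf{h}\vee\mathbf{c}=\mathbf{1}$ amounts exactly to the statement that the complement
\[
\cs\setminus(\mathrm{HI}\cup\mathrm{COMP})
\]
is negligible.

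Next, I would identify this complement set: a sequence lies outside $\mathrm{HI}\cup\mathrm{COMP}$ precisely when it is not of hyperimmune degree and not computable, i.e., when it is a non-computable sequence of hyperimmune-free degree. This is exactly the collection addressed in Proposition~\ref{prop-negligible}~(iv), which tells us that the non-computable sequences of hyperimmune-free degree form a negligible set. Consequently $\mathbf{deg_\mathbf{LV}}(\cs\setminus(\mathrm{HI}\cup\mathrm{COMP}))=\mathbf{0}$, which gives $(\mathbf{h}\vee\mathbf{c})^c=\mathbf{0}$ and therefore $\mathbf{h}\vee\mathbf{c}=\mathbf{1}$, as required.

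There is essentially no obstacle here: the only thing to be careful about is matching definitions, namely recognizing that the LV-degree-theoretic complement of $\mathbf{h}\vee\mathbf{c}$ is generated precisely by the non-computable hyperimmune-free sequences, so that the already-established Proposition~\ref{prop-negligible}~(iv) applies verbatim. The content of this proposition was in turn proved using a result of Barmpalias, Day, and Lewis-Pye, so all the work has already been done and the present statement is a short deduction.
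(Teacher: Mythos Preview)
Your proof is correct and follows essentially the same approach as the paper: both arguments reduce the claim to the negligibility of the non-computable sequences of hyperimmune-free degree and then invoke Proposition~\ref{prop-negligible}~(iv). The paper's proof is a one-line appeal to that proposition, while you have simply spelled out the unpacking of the Boolean-algebra statement in more detail.
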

\begin{proof}
By Proposition \ref{prop-negligible}~(iv) the collection of non-computable sequences of hyperimmune-free degree is negligible, from which the result immediately follows.  
\end{proof}
The following corollary, pointed out to the authors by Frank Stephan,  allows identifying~$\mathbf{h}$ also as the $\LV$-degree of immunity notions.
\begin{defn}{\ }
\begin{itemize}
\item[(i)] Let $\IM$~denote the collection of immune sequences, where a sequence is immune if it has no infinite computably enumerable subsets.
\item[(ii)] Let $\BI$~denote the collection of biimmune sequences, where a sequence is biimune if it and its complement are immune.
\item[(iii)] Let $\BHI$~denote the collection of bihyperimmune sequences, where a sequence is bi\-hyperimmune if it and its complement are hyperimmune.
\end{itemize}

Then set $\mathbf{i}=\mathbf{deg_\mathbf{LV}(}(\IM)^{\equiv_\T }\mathbf{)}$, 
$\mathbf{b}=\mathbf{deg_\mathbf{LV}(}(\BI)^{\equiv_\T }\mathbf{)}$, and
$\mathbf{bh}=\mathbf{deg_\mathbf{LV}(}(\BHI)^{\equiv_\T }\mathbf{)}$.
\end{defn}
\begin{corollary}\label{biimmunedegree}
	 We have $\mathbf{i}=\mathbf{b}=\mathbf{h}=\mathbf{bh}=\mathbf{c}^c$.
\end{corollary}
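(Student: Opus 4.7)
The plan leverages the equality $\mathbf{h} = \mathbf{c}^c$ and then sandwiches the remaining three LV-degrees between $\mathbf{h}$ and $\mathbf{c}^c$. First, $\mathbf{h} = \mathbf{c}^c$: Proposition~\ref{prop-hyp-top} gives $\mathbf{h} \vee \mathbf{c} = \mathbf{1}$, while $\mathbf{h} \wedge \mathbf{c} = \mathbf{0}$ because every sequence of hyperimmune degree is non-computable. Since $\D_\LV$ is a Boolean algebra, uniqueness of complements forces the equality.

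Next, the inclusions $\BHI \subseteq \BI \subseteq \IM$ are immediate from the definitions (a bihyperimmune sequence is biimmune, and a biimmune sequence is immune), while every immune sequence is non-computable (being infinite and containing no infinite c.e.\ subset, it is not even c.e., let alone computable). Applying Lemma~\ref{lem-negfacts}~(ii), together with the Turing-invariance of the collection of non-computable sequences, yields
\[
\mathbf{bh} \leq_\LV \mathbf{b} \leq_\LV \mathbf{i} \leq_\LV \mathbf{c}^c.
\]

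To collapse all five LV-degrees it then suffices to prove the reverse inequality $\mathbf{h} \leq_\LV \mathbf{bh}$. In view of the previous step, this amounts to showing that the collection of sequences of hyperimmune degree whose Turing degree contains no bihyperimmune representative is negligible. Here I would invoke the classical fact that every hyperimmune Turing degree in fact contains a bihyperimmune sequence; this renders the exceptional collection empty, and then chaining gives $\mathbf{c}^c = \mathbf{h} \leq_\LV \mathbf{bh} \leq_\LV \mathbf{b} \leq_\LV \mathbf{i} \leq_\LV \mathbf{c}^c$, forcing all five LV-degrees to coincide.

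The main obstacle is this last classical fact, if one insists on a self-contained proof. A direct construction starts with $A$ of hyperimmune degree: choose $g \leq_\T A$ strictly increasing and rapidly growing but not dominated by any computable function, set $f(n) = 2g(n) + A(n)$ so that $f \equiv_\T A$, and let $B = \bigcup_n [f(2n), f(2n+1))$. Reading off block boundaries gives $B \equiv_\T f \equiv_\T A$, and provided $g$ is engineered so that $f$ furnishes the necessary ``gap'' between $f(2n+1)$ and $f(2n+2)$ relative to every computable threshold, the principal functions of both $B$ and $\omega \setminus B$ will escape every computable function infinitely often, making $B$ bihyperimmune. Arranging the growth of $g$ to force both escape conditions simultaneously from the hypothesis that $\deg_\T(A)$ is merely hyperimmune (rather than, say, hyperhyperimmune) is the technical crux.
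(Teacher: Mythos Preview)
Your proposal is correct and follows essentially the same route as the paper: both arguments sandwich $\mathbf{bh},\mathbf{b},\mathbf{i}$ between $\mathbf{h}$ and $\mathbf{c}^c$ via the obvious inclusions, use Proposition~\ref{prop-hyp-top} for $\mathbf{h}=\mathbf{c}^c$, and invoke Kurtz's theorem that every hyperimmune degree contains a bihyperimmune set for $\mathbf{h}=\mathbf{bh}$. The paper simply cites Kurtz~\cite[Corollary~2.1]{MR716638} (and additionally Dekker--Myhill for $(\IM)^{\equiv_\T}=(2^\omega\setminus\COMP)^{\equiv_\T}$, which your sandwich makes unnecessary) rather than attempting the direct construction you sketch in your final paragraph.
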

\begin{proof}
Let $\COMP$ denote the computable and $\HI$ denote the hyperimmune sequences. Then
\[
	(2^\omega \setminus \COMP)^{\equiv_\T }=(\IM)^{\equiv_\T }\supseteq (\BI)^{\equiv_\T } \supseteq (\BHI)^{\equiv_\T } = (\HI)^{\equiv_\T }.
\]
Here the first equality is by Dekker and Myhill~\cite{MR0099292} (see, for example, Odifreddi~\cite[item~1~on~page~498]{MR982269}), the first inequality is by definition, and the final equality is by Kurtz~\cite[Corollary 2.1]{MR716638}.
Using the definition of hyperimmunity given in terms of strong c.e.\ arrays (see, for example, Odifreddi~\cite[Definition III.3.7]{MR982269}), it is easy to see that every hyperimmune set is immune, and by applying this to both a set and its complement, we see that every bihyperimmune set is biimmune, giving the second inequality. 

Therefore, by Lemma~\ref{lem-negfacts}~(ii), we have
\[
	\mathbf{h} =\mathbf{bh}\leq_\LV \mathbf{b} \leq_\LV \mathbf{i} = \mathbf{c}^c = \mathbf{h},
\]
where the last equality is by Proposition~\ref{prop-hyp-top}.
\end{proof}

We can also conclude that there is no intermediate $\LV$-degree between $\mathbf{h}$ and $\mathbf{1}$.
\begin{corollary}
There is no $\LV$-degree $\mathbf{e}$ such that $\mathbf{h}<_\LV\mathbf{e}<_\LV\mathbf{1}$.
\end{corollary}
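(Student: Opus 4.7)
The plan is to exploit the identity $\mathbf{h}=\mathbf{c}^c$ recorded in Corollary~\ref{biimmunedegree}, which immediately makes $\mathbf{h}$ the Boolean-algebra complement of $\mathbf{c}$ in $\D_\LV$. Since $\D_\LV$ is a Boolean algebra, taking complements is an order-reversing bijection, so any chain strictly between $\mathbf{h}$ and $\mathbf{1}$ corresponds to a chain strictly between $\mathbf{0}$ and $\mathbf{c}$.

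More concretely, I would argue by contradiction: suppose there is an $\LV$-degree $\mathbf{e}$ with $\mathbf{h}<_\LV\mathbf{e}<_\LV\mathbf{1}$, and consider $\mathbf{e}^c$. Using the standard Boolean-algebra fact that $\mathbf{a}<_\LV\mathbf{b}$ iff $\mathbf{b}^c<_\LV\mathbf{a}^c$, this yields $\mathbf{0}=\mathbf{1}^c<_\LV \mathbf{e}^c<_\LV \mathbf{h}^c$. By Corollary~\ref{biimmunedegree} we have $\mathbf{h}=\mathbf{c}^c$, hence $\mathbf{h}^c=\mathbf{c}$, so $\mathbf{0}<_\LV \mathbf{e}^c<_\LV \mathbf{c}$. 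This contradicts Proposition~\ref{prop-comp-atom}, which states that $\mathbf{c}$ is a $\D_\LV$-atom: there is no $\LV$-degree strictly between $\mathbf{0}$ and $\mathbf{c}$.

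The only potentially fiddly point is to justify that the atom definition given in the paper (no nontrivial disjoint decomposition) does in fact forbid any strictly intermediate degree; but this is routine in any Boolean algebra (given $\mathbf{0}<\mathbf{x}<\mathbf{c}$, set $\mathbf{x}_0=\mathbf{x}$ and $\mathbf{x}_1=\mathbf{c}\wedge \mathbf{x}^c$ to obtain the forbidden decomposition $\mathbf{c}=\mathbf{x}_0\vee\mathbf{x}_1$ with $\mathbf{x}_0\wedge\mathbf{x}_1=\mathbf{0}$ and both summands nonzero). So no serious obstacle arises; the entire content of the corollary is packaged in Corollary~\ref{biimmunedegree} and Proposition~\ref{prop-comp-atom}, and the remainder is purely formal manipulation inside the Boolean algebra $\D_\LV$.
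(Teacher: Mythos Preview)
Your proof is correct and follows essentially the same approach as the paper: both use Proposition~\ref{prop-comp-atom} (that $\mathbf{c}$ is an atom) together with Corollary~\ref{biimmunedegree} (that $\mathbf{h}=\mathbf{c}^c$) and the Boolean-algebra fact that the complement of an atom is a co-atom. The paper simply cites a reference for this last fact, whereas you spell out the order-reversal argument and the equivalence of the two atom definitions explicitly.
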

\begin{proof}
By Proposition \ref{prop-comp-atom}, $\mathbf{c}$ is an atom of $\D_\LV$, and by Corollary \ref{biimmunedegree}, $\mathbf{c}^c=\mathbf{h}$. It is a general fact that in Boolean algebras the complement of an atom is a co-atom, that is, an element~$\mathbf{k}$ such that there is no $\mathbf{k}'$ such that $\mathbf{k}<\mathbf{k'}<\mathbf{1}$ (see, for instance, Blyth~\cite[item~(3) on page~79]{Bly05}).
\end{proof}

Let $\mathbf{d}$ denote the LV-degree of the collection of sequences of DNC degree, which is non-negligible by Proposition~\ref{prop-non-negligible}.  Given that every Martin-L\"of random sequence is of DNC degree, we have~${\mathbf{r}\leq_\LV\mathbf{d}}$;  
Bienvenu and Patey~\cite{BiePat14} showed the strictness of the relation.
\begin{thm}[Bienvenu, Patey \cite{BiePat14}]\label{thm-bienvenu-patey}
$\mathbf{r}<_\LV\mathbf{d}$.
\end{thm}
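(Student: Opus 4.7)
The plan is to reduce the statement to the non-negligibility of a concrete Turing-invariant collection and then exhibit a probabilistic algorithm witnessing this. First, $\mathbf{r}\leq_\LV\mathbf{d}$ is immediate: by Ku\v{c}era's theorem (invoked already in Proposition~\ref{prop-non-negligible}) every Martin-L\"of random sequence is of DNC degree, so writing $\mathcal{D}$ for the (Turing-closed) collection of sequences of DNC degree we have $\MLR\subseteq\mathcal{D}$, and Lemma~\ref{lem-negfacts}(ii) gives $\mathbf{r}\leq_\LV\mathbf{d}$. For the strict inequality it suffices to show that the Turing-invariant set $\mathcal{D}\setminus(\MLR)^{\equiv_\T}$ is non-negligible, since this is exactly the assertion that $\mathbf{d}\not\leq_\LV\mathbf{r}$.

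By Proposition~\ref{prop-neg}, non-negligibility of $\mathcal{D}\setminus(\MLR)^{\equiv_\T}$ is equivalent to the existence of a single Turing functional $\Phi$ with $\lambda(\Phi^{-1}(\mathcal{D}\setminus(\MLR)^{\equiv_\T}))>0$---that~is, a probabilistic algorithm that with positive probability outputs a sequence of DNC degree that computes no Martin-L\"of random sequence. I~would construct $\Phi$ so that its output $f=\Phi(X)$ is a DNC function bounded by a chosen fast-growing computable~$h$. The DNC half is routine: at stage $n$ the algorithm reads $\lceil\log h(n)\rceil$ fresh bits of $X$ to choose $f(n)$ uniformly at random from $\{0,1,\dots,h(n)-1\}$. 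Whenever $\phi_n(n)\halts$, the probability that $f(n)=\phi_n(n)$ is at most $1/h(n)$; choosing $h$ with $\sum_n 1/h(n)<\infty$ ensures by Borel--Cantelli that $f$ is a DNC function with positive probability.

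The main obstacle is to arrange that this same probabilistic construction also guarantees, on a positive-measure subset of oracles, that $f$ computes no Martin-L\"of random sequence. The plan is to interleave the DNC moves with a diagonalization against every Turing functional $\Psi_e$: monitor the approximations $\Psi_e^{f\uh n}$, and whenever one threatens to exit a component $\mathcal{U}_{e+n+k}$ of a fixed universal Martin-L\"of test (for some fixed constant~$k$), restrict the random bits used at subsequent stages so as to keep $\Psi_e^f$ trapped inside $\mathcal{U}_{e+n+k}$, thereby forcing $\Psi_e^f\notin\MLR$. The hard part---and, I expect, the technical heart of Bienvenu and Patey's argument---is the bookkeeping needed to show that the total measure of oracles sacrificed to these restrictions, summed over all pairs $(e,n)$, can be kept strictly below the positive measure on which the unrestricted DNC construction succeeds. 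Once this is carried out, the surviving positive-measure set of oracles yields a probabilistic algorithm with the required properties, establishing~$\mathbf{r}<_\LV\mathbf{d}$.
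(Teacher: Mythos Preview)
The paper does not actually prove this theorem: it observes $\mathbf{r}\leq_\LV\mathbf{d}$ via Ku\v{c}era's result (exactly as you do, through $\MLR\subseteq\mathcal{D}$ and Lemma~\ref{lem-negfacts}(ii)) and then simply cites Bienvenu and Patey~\cite{BiePat14} for the strictness. So there is no ``paper's own proof'' to compare your proposal against; your $\leq_\LV$ argument coincides with the paper's one-line justification.

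Regarding the strictness direction: your reduction to showing that $\mathcal{D}\setminus(\MLR)^{\equiv_\T}$ is non-negligible, and hence to building a single Turing functional that with positive probability outputs a DNC sequence computing no Martin-L\"of random, is exactly the right target and is indeed what Bienvenu and Patey establish. Your sketch of the DNC half (random choice of $f(n)$ below a fast-growing bound $h(n)$, Borel--Cantelli) is standard and correct. However, the second half---interleaving a diagonalization that traps each $\Psi_e^f$ inside a universal test component while preserving positive measure---is, as you yourself flag, only a plan and not an argument. The actual Bienvenu--Patey proof carries this out via a bushy-tree forcing argument rather than the ``monitor and restrict random bits'' picture you describe, and the combinatorics of balancing the measure lost to the anti-MLR requirements against the measure retained for DNC is genuinely delicate. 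Your proposal correctly identifies the shape of the problem but does not supply the mechanism that makes the bookkeeping go through, so as written it is an outline rather than a proof.
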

Since $\mathbf{c}\wedge\mathbf{d}=\mathbf{0}$, we have the following corollary.
\begin{corollary}
$\mathbf{r}\vee\mathbf{c}$ and $\mathbf{d}$ are incomparable. 
\end{corollary}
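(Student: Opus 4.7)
The plan is to verify the two non-inequalities $\mathbf{r}\vee\mathbf{c}\not\leq_\LV\mathbf{d}$ and $\mathbf{d}\not\leq_\LV\mathbf{r}\vee\mathbf{c}$ separately, using only routine Boolean-algebra manipulations in $\D_\LV$ together with three facts already established: that $\mathbf{c}$ is a $\D_\LV$-atom (Proposition~\ref{prop-comp-atom}), that $\mathbf{c}\wedge\mathbf{d}=\mathbf{0}$ (noted just before the corollary, and which follows from the fact that no computable sequence has DNC degree), and that $\mathbf{r}<_\LV\mathbf{d}$ (Theorem~\ref{thm-bienvenu-patey}).

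For the direction $\mathbf{r}\vee\mathbf{c}\not\leq_\LV\mathbf{d}$, I would argue by contradiction. If $\mathbf{r}\vee\mathbf{c}\leq_\LV\mathbf{d}$, then in particular $\mathbf{c}\leq_\LV\mathbf{d}$, which is equivalent to $\mathbf{c}\wedge\mathbf{d}=\mathbf{c}$. Combined with $\mathbf{c}\wedge\mathbf{d}=\mathbf{0}$, this forces $\mathbf{c}=\mathbf{0}$, contradicting Proposition~\ref{prop-comp-atom}, which states that $\mathbf{c}$ is a (necessarily nonzero) $\D_\LV$-atom.

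For the direction $\mathbf{d}\not\leq_\LV\mathbf{r}\vee\mathbf{c}$, suppose for contradiction that $\mathbf{d}\leq_\LV\mathbf{r}\vee\mathbf{c}$. Equivalently, $\mathbf{d}\wedge(\mathbf{r}\vee\mathbf{c})^c=\mathbf{0}$, which by de Morgan equals $\mathbf{d}\wedge\mathbf{r}^c\wedge\mathbf{c}^c$. From $\mathbf{c}\wedge\mathbf{d}=\mathbf{0}$ we already have $\mathbf{d}\leq_\LV\mathbf{c}^c$, so $\mathbf{d}\wedge\mathbf{c}^c=\mathbf{d}$, and the assumed equality reduces to $\mathbf{d}\wedge\mathbf{r}^c=\mathbf{0}$, that is, $\mathbf{d}\leq_\LV\mathbf{r}$. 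Combined with $\mathbf{r}\leq_\LV\mathbf{d}$, this yields $\mathbf{r}=\mathbf{d}$, in direct contradiction to Theorem~\ref{thm-bienvenu-patey}.

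There is no real obstacle here: the entire argument is a short Boolean-algebra exercise, and all of the nontrivial content is imported from the cited prior results (most notably the Bienvenu--Patey strictness $\mathbf{r}<_\LV\mathbf{d}$, which is where all the technical weight lies).
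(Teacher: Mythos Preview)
Your proof is correct and follows the same approach as the paper, which simply notes that the corollary follows from $\mathbf{c}\wedge\mathbf{d}=\mathbf{0}$ together with Theorem~\ref{thm-bienvenu-patey}. You have merely made the routine Boolean-algebra steps explicit where the paper leaves them to the reader.
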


We can also easily derive the following result and corollary.

\begin{prop}\label{prop-dg}
 $\mathbf{d}\wedge\mathbf{g}=\mathbf{0}$.
\end{prop}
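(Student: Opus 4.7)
The plan is to follow the same pattern as the proof of Proposition~\ref{prop-r-g}~(i). The key ingredient I need is the classical fact that no 1-generic sequence computes a function of DNC degree; since being of DNC degree is a Turing invariant property, this implies that the set of Turing degrees containing a 1-generic sequence is disjoint from the set of Turing degrees of DNC degree. Hence the Turing invariant collection $(\mathrm{1GEN})^{\equiv_\T}$ generating $\mathbf{g}$ and the collection of sequences of DNC degree generating $\mathbf{d}$ have empty intersection, which is trivially negligible, yielding $\mathbf{d}\wedge\mathbf{g}=\mathbf{0}$.

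The main obstacle is therefore to establish the lemma that no 1-generic $G$ computes a DNC function. My plan for this is as follows. Suppose for contradiction that $\Phi^G$ is total and DNC for some Turing functional $\Phi$, and consider the c.e.\ set
\[
W=\{\sigma\in\str : \exists e\leq|\sigma|\ (\Phi^\sigma(e)\halts\ \wedge\ \phi_e(e)\halts\ \wedge\ \Phi^\sigma(e)=\phi_e(e))\}.
\]
If $G$ had an initial segment in $W$, then $\Phi^G(e)=\phi_e(e)$ for some $e$, contradicting the DNC property of $\Phi^G$. By 1-genericity of $G$, there must therefore exist an initial segment $\sigma\prec G$ such that no extension of $\sigma$ lies in $W$.

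Now define $f^\star(e)$ to be $\Phi^\tau(e)$, where $\tau$ is the first string $\tau\succeq\sigma$ in some fixed effective enumeration of $\str$ with $\Phi^\tau(e)\halts$. Since $\Phi^G$ is total and $\sigma\prec G$, some prefix of $G$ witnesses the existence of such a $\tau$, so $f^\star$ is total and computable. Since every such $\tau$ extends $\sigma$ we have $\tau\notin W$, so whenever $\phi_e(e)\halts$ we must have $\Phi^\tau(e)\neq\phi_e(e)$; that is, $f^\star$ is a computable DNC function, an impossibility. This contradiction completes the proof of the lemma, and hence $\mathbf{d}\wedge\mathbf{g}=\mathbf{0}$. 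I expect the lemma is a standard consequence of 1-genericity attributable to Kurtz, so if a convenient textbook reference can be located (for instance in Downey and Hirschfeldt~\cite{DowHir10}), it may be cleaner to cite it rather than give the full argument.
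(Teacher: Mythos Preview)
Your proposal is correct and follows exactly the same approach as the paper: invoke the fact that no 1-generic sequence is of DNC degree and then argue as in Proposition~\ref{prop-r-g}~(i). The paper simply cites this fact to Demuth and Ku\v{c}era~\cite{DemKuc87} (not Kurtz) rather than proving it; your self-contained argument for the lemma is fine, though for tidiness you may want to restrict the search for $\tau$ to strings with $|\tau|\geq e$ so that the bound $e\leq|\tau|$ in the definition of $W$ applies directly.
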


\begin{proof}
No 1-generic sequence is of DNC degree by a result of Demuth and Ku\v{c}era~\cite{DemKuc87}, and thus the result follows from the same reasoning used in the proof of Proposition \ref{prop-r-g}~(i).
\end{proof}
\begin{corollary}
 $(\mathbf{r}\vee\mathbf{g})<_\LV (\mathbf{d}\vee\mathbf{g})$.
\end{corollary}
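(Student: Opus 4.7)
The plan is to reduce the statement to the previously established strict inequality $\mathbf{r}<_\LV\mathbf{d}$ from Theorem~\ref{thm-bienvenu-patey}, using the fact that both $\mathbf{r}$ and $\mathbf{d}$ meet $\mathbf{g}$ trivially in the Boolean algebra $\D_\LV$.

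First, I would establish $\mathbf{r}\vee\mathbf{g}\leq_\LV\mathbf{d}\vee\mathbf{g}$. This is immediate from the monotonicity of $\vee$ in $\D_\LV$ together with the fact that $\mathbf{r}\leq_\LV\mathbf{d}$, which itself follows from Ku\v{c}era's result (cited in Proposition~\ref{prop-non-negligible}) that every Martin-L\"of random sequence is of DNC degree, combined with Lemma~\ref{lem-negfacts}~(ii).

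Next, to get strictness, I would argue by contradiction. Suppose $\mathbf{d}\vee\mathbf{g}\leq_\LV\mathbf{r}\vee\mathbf{g}$, so that $\mathbf{r}\vee\mathbf{g}=\mathbf{d}\vee\mathbf{g}$ in $\D_\LV$. Meeting both sides with $\mathbf{g}^c$ and using the distributive law of the Boolean algebra $\D_\LV$ yields
\[
(\mathbf{r}\wedge\mathbf{g}^c)\vee(\mathbf{g}\wedge\mathbf{g}^c)=(\mathbf{d}\wedge\mathbf{g}^c)\vee(\mathbf{g}\wedge\mathbf{g}^c),
\]
i.e.\ $\mathbf{r}\wedge\mathbf{g}^c=\mathbf{d}\wedge\mathbf{g}^c$. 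Now $\mathbf{r}\wedge\mathbf{g}=\mathbf{0}$ by Proposition~\ref{prop-r-g}~(i) and $\mathbf{d}\wedge\mathbf{g}=\mathbf{0}$ by Proposition~\ref{prop-dg}, so $\mathbf{r}\leq_\LV\mathbf{g}^c$ and $\mathbf{d}\leq_\LV\mathbf{g}^c$; hence $\mathbf{r}\wedge\mathbf{g}^c=\mathbf{r}$ and $\mathbf{d}\wedge\mathbf{g}^c=\mathbf{d}$. Therefore $\mathbf{r}=\mathbf{d}$, contradicting Theorem~\ref{thm-bienvenu-patey}.

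There is no real obstacle here: everything reduces to the Boolean-algebra cancellation principle ``$a\wedge c=b\wedge c=\mathbf{0}$ and $a\vee c=b\vee c$ imply $a=b$'' applied in $\D_\LV$, with the hard content already packaged in Theorem~\ref{thm-bienvenu-patey}. The only point to watch is that the argument genuinely takes place inside the Boolean algebra $\D_\LV$, and therefore implicitly uses that $\D_\LV$ is indeed a Boolean algebra, which was noted earlier in Section~\ref{sec-lv-degrees}.
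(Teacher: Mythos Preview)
Your proof is correct and follows essentially the same Boolean-algebra route as the paper: both arguments reduce the strict inequality to Proposition~\ref{prop-dg} ($\mathbf{d}\wedge\mathbf{g}=\mathbf{0}$) together with Theorem~\ref{thm-bienvenu-patey} ($\mathbf{r}<_\LV\mathbf{d}$). The only cosmetic difference is that the paper computes directly $(\mathbf{d}\vee\mathbf{g})\wedge(\mathbf{r}\vee\mathbf{g})^c=\mathbf{d}\wedge\mathbf{r}^c>_\LV\mathbf{0}$ rather than arguing by contradiction, which incidentally lets it bypass the appeal to Proposition~\ref{prop-r-g}~(i).
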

\begin{proof}
Using general properties of Boolean algebras (see, for example, Blyth~\cite{Bly05}), we have
\[(\mathbf{d}\vee\mathbf{g})\wedge(\mathbf{r}\vee\mathbf{g})^c=(\mathbf{d}\vee\mathbf{g})\wedge (\mathbf{r}^c\wedge\mathbf{g}^c)=((\mathbf{d}\vee\mathbf{g})\wedge \mathbf{g}^c)\wedge\mathbf{r}^c=(\mathbf{d}\wedge\mathbf{g}^c)\wedge\mathbf{r}^c=\mathbf{d}\wedge\mathbf{r}^c>_\LV\mathbf{0},\]
where the last equality is by Proposition \ref{prop-dg} and the final inequality is by Theorem \ref{thm-bienvenu-patey}.  
In~particular, $(\mathbf{d}\vee\mathbf{g})>_\LV (\mathbf{r}\vee\mathbf{g})$.
\end{proof}

In Section \ref{sec-implementing}, our new application of V'yugin's technique for building semi-measures implies that the collection of non-computable sequences that are not of DNC degree is non-negligible, which in turn implies that $\mathbf{d}\vee\mathbf{c}$ is not the top $\LV$-degree.
However, we can alternatively derive this latter fact as follows.

\begin{prop}\label{prop-d-h}
$\mathbf{d}<_\LV\mathbf{h}$.
\end{prop}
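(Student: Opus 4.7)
My plan is to prove the two halves $\mathbf{d}\leq_\LV\mathbf{h}$ and $\mathbf{d}\neq\mathbf{h}$ separately, with the inequality direction coming almost for free from results established earlier in this section and the strictness coming from a short Boolean-algebra calculation combining Propositions~\ref{prop-dg} and~\ref{prop-rgh}.

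For the first half, I would unpack the definition: $\mathbf{d}\leq_\LV\mathbf{h}$ means that $\text{DNC}\setminus\text{HYP}$ is negligible, where $\text{DNC}$ denotes the collection of sequences of DNC degree and $\text{HYP}$ the collection of sequences of hyperimmune degree. A sequence in this difference is of DNC degree but of hyperimmune-free degree. Since no computable sequence is of DNC degree, every such sequence is in particular non-computable and of hyperimmune-free degree. By Proposition~\ref{prop-negligible}~(iv) the class of non-computable sequences of hyperimmune-free degree is negligible, and since negligibility is monotone under inclusion (as $\overline M$ is a measure), it follows that $\text{DNC}\setminus\text{HYP}$ is negligible.

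For strictness, I would argue by contradiction. Suppose $\mathbf{d}=\mathbf{h}$. Proposition~\ref{prop-rgh} gives $\mathbf{g}<_\LV\mathbf{h}$, so in particular $\mathbf{g}\leq_\LV\mathbf{h}$, which is the same as $\mathbf{g}\wedge\mathbf{h}=\mathbf{g}$. Substituting the assumed equality $\mathbf{h}=\mathbf{d}$ yields $\mathbf{g}\wedge\mathbf{d}=\mathbf{g}$. But Proposition~\ref{prop-dg} gives $\mathbf{d}\wedge\mathbf{g}=\mathbf{0}$, so $\mathbf{g}=\mathbf{0}$, contradicting the fact that the collection of 1-generic sequences is non-negligible (Proposition~\ref{prop-non-negligible}~(ii)). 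Combined with the first half, this yields $\mathbf{d}<_\LV\mathbf{h}$.

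There is no real obstacle here; the whole argument is a short bookkeeping exercise in the Boolean algebra $\D_\LV$ using the ingredients already on hand. The only thing worth double-checking is that one cannot replace the chain through $\mathbf{g}$ by a more direct inclusion $\text{HYP}\setminus\text{DNC}$: such a direct argument would require producing an explicit non-negligible class of hyperimmune-degree sequences of non-DNC degree, and the 1-generic route supplies exactly that witness via Propositions~\ref{prop-non-negligible}~(ii) and~\ref{prop-dg}.
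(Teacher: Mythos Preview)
Your proof is correct and follows essentially the same approach as the paper's. For $\mathbf{d}\leq_\LV\mathbf{h}$ the paper routes through Proposition~\ref{prop-hyp-top} ($\mathbf{h}\vee\mathbf{c}=\mathbf{1}$) rather than citing Proposition~\ref{prop-negligible}~(iv) directly, and for strictness it observes directly that the 1-generics witness $\mathbf{h}\nleq_\LV\mathbf{d}$ rather than phrasing it as a Boolean-algebra contradiction via Propositions~\ref{prop-rgh} and~\ref{prop-dg}; but the underlying ingredients are identical in both halves.
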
\nopagebreak\begin{proof}
By Proposition \ref{prop-hyp-top}, $\mathbf{d}\leq_\LV\mathbf{h}\vee\mathbf{c}$, which implies that the collection of sequences of DNC~degree that are neither computable nor of hyperimmune degree is negligible.  But clearly no sequence of DNC~degree is computable, and thus we have 
$\mathbf{d}\leq_\LV\mathbf{h}$.  
Since every $1$-generic sequence has hyperimmune degree and is not of DNC degree, we have $\mathbf{h}\nleq_\LV\mathbf{d}$, and thus~${\mathbf{d}<_\LV\mathbf{h}}$.
\end{proof}

The following results about joins in $\D_\LV$ are immediate.

\begin{samepage}
\begin{corollary}{\ }
\begin{itemize}
\item[(i)] $\mathbf{c}<_\LV\mathbf{r}\vee\mathbf{c}<_\LV\mathbf{d}\vee\mathbf{c}<_\LV\mathbf{1}$.
\item[(ii)] $\mathbf{c}<_\LV\mathbf{g}\vee\mathbf{c}<_\LV\mathbf{d}\vee\mathbf{g}\vee\mathbf{c}$.
\item[(iii)]$\mathbf{d}\vee\mathbf{c}$, $\mathbf{g}\vee\mathbf{c}$, and $\mathbf{d}\vee\mathbf{g}$ are pairwise incomparable $\LV$-degrees.
\end{itemize}
\end{corollary}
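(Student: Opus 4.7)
The plan is to grind through each strict inequality and each incomparability using Boolean algebra, relying on the following facts already established: $\mathbf{c},\mathbf{r},\mathbf{g},\mathbf{d}$ are all strictly above $\mathbf{0}$ (Proposition~\ref{prop-non-negligible}, Theorem~\ref{thm:rand-atom}, Proposition~\ref{prop-comp-atom}); the meets $\mathbf{c}\wedge\mathbf{r}$, $\mathbf{c}\wedge\mathbf{g}$, $\mathbf{c}\wedge\mathbf{d}$ are all $\mathbf{0}$ because no computable sequence is Martin-L\"of random, $1$-generic, or of DNC degree; $\mathbf{d}\wedge\mathbf{g}=\mathbf{0}$ by Proposition~\ref{prop-dg}; and $\mathbf{r}<_\LV\mathbf{d}$ by Theorem~\ref{thm-bienvenu-patey}.

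First I would record the elementary Boolean principle that if $\mathbf{x}\wedge\mathbf{y}=\mathbf{0}$ and $\mathbf{x}>\mathbf{0}$, then $\mathbf{x}\nleq_\LV\mathbf{y}$ (since $\mathbf{x}\leq_\LV\mathbf{y}$ would give $\mathbf{x}=\mathbf{x}\wedge\mathbf{y}=\mathbf{0}$), whence $\mathbf{y}<_\LV\mathbf{x}\vee\mathbf{y}$. Using this, part (i) decomposes into three steps: $\mathbf{c}<_\LV\mathbf{r}\vee\mathbf{c}$ is the principle applied to $(\mathbf{x},\mathbf{y})=(\mathbf{r},\mathbf{c})$; for $\mathbf{r}\vee\mathbf{c}<_\LV\mathbf{d}\vee\mathbf{c}$, the $\leq_\LV$ part is monotonicity of $\vee$ applied to $\mathbf{r}\leq_\LV\mathbf{d}$, and strictness comes from computing $(\mathbf{d}\vee\mathbf{c})\wedge(\mathbf{r}\vee\mathbf{c})^c=\mathbf{d}\wedge\mathbf{r}^c\wedge\mathbf{c}^c=\mathbf{d}\wedge\mathbf{r}^c$ (the last equality using $\mathbf{d}\wedge\mathbf{c}=\mathbf{0}$, so $\mathbf{d}\leq_\LV\mathbf{c}^c$), which is strictly positive by Theorem~\ref{thm-bienvenu-patey}; finally, $\mathbf{d}\vee\mathbf{c}<_\LV\mathbf{1}$ follows from $\mathbf{g}\wedge(\mathbf{d}\vee\mathbf{c})=(\mathbf{g}\wedge\mathbf{d})\vee(\mathbf{g}\wedge\mathbf{c})=\mathbf{0}$ together with $\mathbf{g}>\mathbf{0}$, yielding $\mathbf{d}\vee\mathbf{c}<_\LV\mathbf{d}\vee\mathbf{c}\vee\mathbf{g}\leq_\LV\mathbf{1}$.

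Part (ii) is handled identically: $\mathbf{c}<_\LV\mathbf{g}\vee\mathbf{c}$ is the principle applied to $(\mathbf{g},\mathbf{c})$, and $\mathbf{g}\vee\mathbf{c}<_\LV\mathbf{d}\vee\mathbf{g}\vee\mathbf{c}$ follows from $\mathbf{d}\wedge(\mathbf{g}\vee\mathbf{c})=(\mathbf{d}\wedge\mathbf{g})\vee(\mathbf{d}\wedge\mathbf{c})=\mathbf{0}$ combined with $\mathbf{d}>\mathbf{0}$. For (iii), each of the six non-comparabilities reduces by the same principle to one of the three meet identities
\[
\mathbf{c}\wedge(\mathbf{d}\vee\mathbf{g})=\mathbf{0},\qquad \mathbf{d}\wedge(\mathbf{g}\vee\mathbf{c})=\mathbf{0},\qquad \mathbf{g}\wedge(\mathbf{d}\vee\mathbf{c})=\mathbf{0},
\]
all of which distribute into the pairwise-disjointness facts already recorded; concretely, $\mathbf{d}\vee\mathbf{c}$ and $\mathbf{g}\vee\mathbf{c}$ are incomparable because the first two identities fail $\mathbf{d}\leq_\LV\mathbf{g}\vee\mathbf{c}$ and $\mathbf{g}\leq_\LV\mathbf{d}\vee\mathbf{c}$ respectively, and similarly for the other two pairs. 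I expect no substantive obstacle: the whole argument is routine distributivity and De~Morgan on top of Theorem~\ref{thm-bienvenu-patey} and Proposition~\ref{prop-dg}, which justifies the authors' labeling of the corollary as ``immediate.''
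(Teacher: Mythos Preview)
Your proposal is correct and is precisely the kind of routine Boolean-algebra verification the paper has in mind; the paper gives no proof at all, merely declaring the corollary ``immediate'' after Proposition~\ref{prop-d-h}, so your write-up is exactly the expansion one would supply.
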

\end{samepage}

The results of this section are summarized in Figure~\ref{figure-LV-degrees}. 

\subsection{Open questions} We conclude with the following open questions.

\begin{question}
Is $\mathbf{d} \vee\mathbf{g} = \mathbf{h}$? In particular, is $\mathbf{d} \vee\mathbf{g}\vee\mathbf{c} = \mathbf{1}$?
\end{question}

Given that $\mathbf{r}$ is a $\D_\LV$-atom, it is also reasonable to ask whether the same holds for $\mathbf{g}$.

\begin{question}
	Is $\mathbf{g}$ a $\D_\LV$-atom?
\end{question}

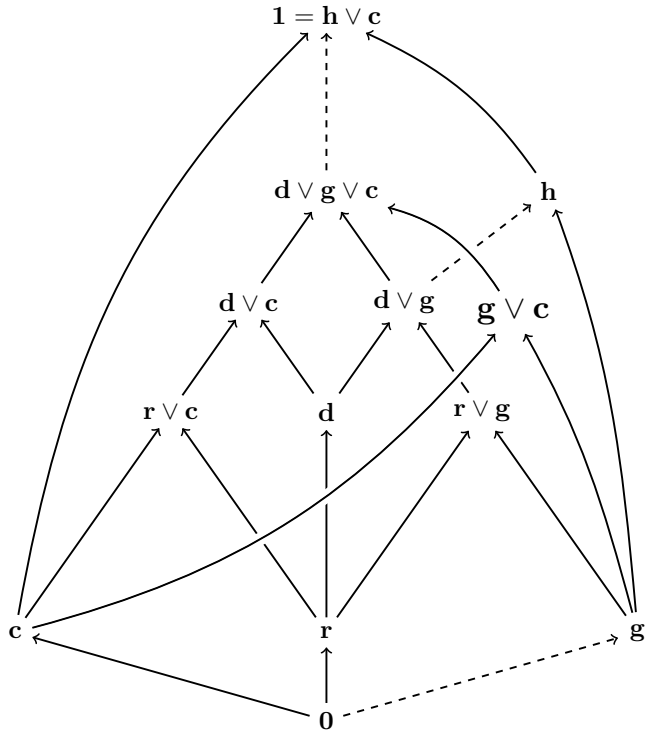
\begin{figure}[h]
\begin{center}
	\scalebox{0.9}{
		\begin{tikzpicture}[scale=0.65,auto=left,every node/.style={black}]
		
		% Nodes
		\node (1) at (0,16) {$\mathbf{1}=\mathbf{h}\vee\mathbf{c}$};

		\node (duguc) at (0,12) {$\mathbf{d} \vee\mathbf{g} \vee\mathbf{c}$};
		\node (h) at (5,12) {$\mathbf{h}$};

		\node (duc) at (-1.75, 9.5) {$\mathbf{d} \vee\mathbf{c}$};
		\node (dug) at (1.75, 9.5) {$\mathbf{d} \vee\mathbf{g}$};
		\node (guc) at (4.2, 9.25) {\scalebox{1.25}{$\mathbf{g} \vee\mathbf{c}$}};

		\node (ruc) at (-3.5,7) {$\mathbf{r} \vee\mathbf{c}$};
		\node (d) at (0,7) {$\mathbf{d}$};
		\node (rug) at (3.5,7) {$\mathbf{r} \vee\mathbf{g}$};

		\node[circle,inner sep=2.5pt] (c) at (-7,2) {$\mathbf{c}$};
		\node (r) at (0,2) {$\mathbf{r}$};
		\node[circle,inner sep=2.5pt] (g) at (7,2) {$\mathbf{g}$};
		
		\node (0) at (0,0) {$\mathbf{0}$};

		% Straight lines
		\foreach \from/\to in {
			0/r,c/ruc,r/ruc,r/d,ruc/duc,d/duc,d/dug,r/rug,g/rug,rug/dug,duc/duguc,dug/duguc}
		\draw [->,thick] (\from) -- (\to);

		% Dashed lines
		\foreach \from/\to in {
			duguc/1,dug/h,0/g}
		\draw [->,thick,dashed] (\from) -- (\to);

		% Special lines
		\draw [->,thick,looseness=1] (c) to [out=80,in=225] (1);
		\draw [->,thick,looseness=1] (g) to [out=95,in=290] (h);
		\draw [->,thick,looseness=1] (g) to [out=105,in=299] (guc);
		\draw [looseness=1,draw=white,double=black,double distance=\pgflinewidth,line width=2.5pt] (guc) to [out=123,in=-15] (duguc);
		\draw [->,thick,looseness=1] (guc) to [out=123,in=-15] (duguc);
		\draw [looseness=1,draw=white,double=black,double distance=\pgflinewidth,line width=2.5pt] (c) to [out=15,in=230] (guc);
		\draw [->,thick,looseness=1] (c) to [out=15,in=230] (guc);
		\draw [->,thick,looseness=1] (h) to [out=125,in=-25] (1);
		\draw [->,thick] (0) -- (c);
		
		\end{tikzpicture}}
\end{center}
\caption{Standard arrows represent strict separations in the LV-degrees. Dotted arrows represent the following open questions: (a)~Is $\mathbf{g}$ a $\D_\LV$-atom? (b)~Is $\mathbf{d} \vee\mathbf{g} = \mathbf{h}$, and thus is $\mathbf{d}\vee\mathbf{g} \vee\mathbf{c} = \mathbf{1}$?}
\label{figure-LV-degrees}
\end{figure}

For the definitions of the notions appearing in the following open questions, see a standard reference such as Downey and Hirschfeldt~\cite{DowHir10}.

\begin{samepage}
\begin{question}\quad
	\begin{itemize}
		\item What are the $\LV$-degrees of the collections of sequences that are Turing equivalent to some sequence of Hausdorff dimension $1$, of packing dimension $1$, of Hausdorff dimension $<1$, of packing dimension $<1$?
		Given some $\alpha \in (0,1)$, what are the $\LV$-degrees of the collections of sequences that are Turing equivalent to some sequence of Hausdorff dimension $\alpha$ or of packing dimension $\alpha$? 
		\item What is the $\LV$-degree of the collection of sequences that {\em compute} some $1$-generic sequence?
		\item What is the $\LV$-degree of the collection of generalized low$_{\,1}$ sequences, that is, sequences~$X$ with the property that $X' \equiv_\T X \oplus \emptyset'$?
	\end{itemize}
\end{question}
\end{samepage}

\section{How to Build a Semi-measure}\label{sec-building}

In this section, we outline a template for building left-c.e.\ semi-measures that was developed~\cite{Vyu82} and applied~\cite{Vyu08,Vyu09,Vyu12} by V'yugin and which has several applications in the study of $\D_\LV$ as well as the study of $\Pi^0_1$ classes.  The main idea of V'yugin's construction is that a semi-measure on $\str$ can be seen as a network flow on a directed graph $G$ such that 
\begin{itemize}
\item[(i)] the nodes of $G$, $\V_G$, are the elements of $\str$, and 
\item[(ii)] the edges of $G$, $\E_G$, are pairs $(\sigma,\tau)$ of nodes $\sigma,\tau\in\str$ such that $\sigma\prec \tau$.
\end{itemize}

For $\sigma,\tau\in \str$ with $\sigma \preceq \tau$ we will say that $\sigma$ is {\em above} $\tau$ and that $\tau$ is {\em below} $\sigma$; that is, in this article the binary tree~$\str$ grows downward. Note that, while this goes against the usual convention in computability theory, it has the intuitive advantage that measure will flow from the root $\varepsilon$ downwards, as liquids naturally do.

Given $\sigma,\tau\in \str$ with $\sigma \prec \tau$, the length of $(\sigma,\tau)$, written as $|(\sigma,\tau)|$, is defined to be $|\tau|-|\sigma|$. If $|(\sigma,\tau)|=1$ then we always have
$(\sigma,\tau) \in \E_G$; such edges of $G$ will be referred to as \emph{normal edges} and the set of normal edges will be denoted by $\mathcal{N}_G$. 
If $|(\sigma,\tau)|>1$ then $(\sigma,\tau)$ may or may not be in $\E_G$; if it is, we call~$(\sigma,\tau)$ an \emph{extra edge} of $G$. The set of extra edges will be denoted by $\mathcal{X}_G$. We will omit the subscripts if $G$ is clear from context.

Directed graphs $G$ that satisfy $\V_G=\str$ as described above will be called \emph{$\str$-digraphs}.  In the sequel, we will restrict our attention to computable $\str$-digraphs.

\begin{defn}
Given a $\str$-digraph $G$, a \emph{network} on $G$ is a function $q\colon \E_G\rightarrow\mathbb{Q}\cap [0,1]$ satisfying, for each $\sigma\in\str$,
\[
\sum_{(\sigma,\tau)\in \E_G}q(\sigma,\tau)\leq 1.
\]
\end{defn}

\noindent The idea here is that for a node $\sigma$, $q(\sigma,\tau)$ gives the proportion of the flow arriving in $\sigma$ that continues to flow into~$\tau$. 

In the remainder of the article, we will always have $q(\sigma,\tau)>0$ for every extra edge $(\sigma,\tau)\in \X$. In fact, if $|(\sigma,\tau)|>1$, we will silently identify the two properties $q(\sigma,\tau)=0$ and $(\sigma,\tau)\notin \E$ since both cases equally have  no effect on the outcome  of the construction. Note however that for normal edges $(\sigma,\tau)\in \N$ the case $q(\sigma,\tau)=0$ will occur quite often.

\begin{defn}
The \emph{amount of flow into a node $\tau$}, denoted $R(\tau)$, is defined inductively by
\begin{align*}
R(\varepsilon)&=1,\\
R(\tau)&=\sum_{(\sigma,\tau)\in \E_G}q(\sigma,\tau)R(\sigma).
\end{align*}
\end{defn}

\noindent
Hereafter we will refer to $R$ as the \emph{in-flow function associated to $q$}.  Observe further that if $q$~is computable, then so is $R$.  

\begin{remark}\label{rmk-R}
$\sigma\prec\tau$ does not necessarily imply that $R(\sigma)\geq R(\tau)$.  In particular, not all of the flow that we observe below $\sigma$ must have flowed through $\sigma$ itself, as there could be an extra edge that bypasses $\sigma$ and diverts flow to an extension of $\sigma$.
\end{remark}

To correct for this lack of monotonicity of $R$, we define the $q$-flow associated with a network~$q$.  Given $\sigma\in\str$, let $T_\sigma$ \label{dkhfdghjkwqeuzidgfbnsdf} be the collection of finite prefix-free sets of strings $\tau$ such that $\sigma\preceq \tau$.

\begin{defn}\label{def:q-flow}
Let $q$ be a network on a $\str$-digraph $G$, and let $R$ be the in-flow function associated to $q$.  Then the \emph{$q$-flow} $P$ is defined by
\[
P(\sigma)=\sup_{D\in T_\sigma}\sum_{\tau\in D}R(\tau).
\]
\end{defn}

\noindent
$P(\sigma)$ is thus the maximal amount of flow that can be observed passing through a set of extensions of the node~$\sigma$. The motivation for looking at prefix-free sets $D$ of nodes is to avoid counting the same quantity of flow more than once. 
Note that since $\{\sigma\}\in T_\sigma$, we always have $P(\sigma) \geq R(\sigma)$, but equality need not hold due to the reason discussed in~Remark \ref{rmk-R}.

We have the following important fact. 

\begin{lem}\label{lem:semimeasure}
Let $q$ be a computable $\str$-digraph.  Then the $q$-flow $P$ is a left-c.e.\ semi-measure.
\end{lem}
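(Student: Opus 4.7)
The plan is to verify, in order of increasing difficulty, the three defining properties of a left-c.e.\ semi-measure: left-c.e.-ness, super-additivity $P(\sigma) \geq P(\sigma 0) + P(\sigma 1)$, and normalization $P(\varepsilon) \leq 1$. Left-c.e.-ness is essentially immediate: since $q$ takes rational values and each $R(\tau)$ is determined by a finite recursion over the (at most $|\tau|$) predecessors of $\tau$ in $G$, the function $R$ is uniformly computable and rational-valued; because $T_\sigma$ can be effectively enumerated, $P(\sigma)$ is the supremum of a computable sequence of rationals, and hence the limit of a non-decreasing computable sequence of rationals, uniformly in $\sigma$.

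Super-additivity rests on a union argument: given any $D_0 \in T_{\sigma 0}$ and $D_1 \in T_{\sigma 1}$, the elements of $D_0$ extend $\sigma 0$ while those of $D_1$ extend $\sigma 1$, so they are pairwise incomparable and $D_0 \cup D_1 \in T_\sigma$. Thus $P(\sigma) \geq \sum_{\tau \in D_0} R(\tau) + \sum_{\tau \in D_1} R(\tau)$, and taking suprema over $D_0$ and $D_1$ independently gives the desired inequality.

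The main obstacle is normalization, where the difficulty is that extra edges can deliver additional flow into a subtree and so defeat any naive level-by-level argument. The key step is to unfold the recursion defining $R$: a routine induction on $|\tau|$ gives
\[
R(\tau) = \sum_{P\colon \varepsilon \to \tau} \prod_{e \in P} q(e),
\]
where $P$ ranges over all directed paths in $G$ from $\varepsilon$ to $\tau$. Because each edge strictly increases string length, any directed path traverses a chain of strings, and hence meets a prefix-free set $D$ in at most one node, necessarily its endpoint. Consequently, for $D \in T_\varepsilon$ the sum $\sum_{\tau \in D} R(\tau)$ rewrites as a single sum, over directed paths from $\varepsilon$ whose first (and only) visit to $D$ is at their endpoint, of products $\prod_{e \in P} q(e)$.

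To bound this quantity by $1$, I would introduce an auxiliary function $\Psi\colon \str \to [0, \infty)$ defined by $\Psi(\tau) = 1$ for $\tau \in D$, $\Psi(\sigma) = 0$ if $\sigma \notin D$ has no strict extension in $D$, and $\Psi(\sigma) = \sum_{(\sigma, \tau) \in \E} q(\sigma, \tau)\, \Psi(\tau)$ otherwise. With $N = \max\{|\tau| : \tau \in D\}$, the same path-unfolding identifies $\Psi(\varepsilon)$ with $\sum_{\tau \in D} R(\tau)$, and a downward induction on $|\sigma|$ starting from $|\sigma| = N$ (where $\Psi(\sigma) \in \{0, 1\}$), together with the network constraint $\sum_{(\sigma,\tau) \in \E} q(\sigma, \tau) \leq 1$, shows $\Psi(\sigma) \leq 1$ for every $\sigma$. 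Specializing to $\sigma = \varepsilon$ yields the required bound. Morally, one is just observing that a (possibly defective) random walk on $G$ starting at $\varepsilon$ with transition weights $q(\sigma, \tau)$ visits at most one node of any antichain.
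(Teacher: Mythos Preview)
Your proof is correct. For super-additivity and left-c.e.-ness you do essentially what the paper does: the union $D_0\cup D_1\in T_\sigma$ argument and the observation that $R$ is uniformly computable from the computable network~$q$.

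Where you go beyond the paper is normalization. The paper's proof simply writes ``Clearly, $P(\varepsilon)=1$'' and moves on, leaving the inequality $\sum_{\tau\in D}R(\tau)\leq 1$ for arbitrary finite prefix-free $D$ unargued. You supply a genuine argument: the path expansion $R(\tau)=\sum_{P\colon\varepsilon\to\tau}\prod_{e\in P}q(e)$ together with the auxiliary ``hitting probability'' $\Psi$ and the downward induction using the outflow constraint $\sum_{(\sigma,\tau)\in\E}q(\sigma,\tau)\leq 1$. This is exactly the flow-conservation / random-walk intuition the paper presumably has in mind, made precise. The key point you correctly identify is that extra edges could in principle inject flow below a node without passing through it, so a naive level-by-level bound is not immediate; your observation that any directed path visits a prefix-free set at most once (because every edge goes to a strict extension and $D$ is an antichain) is what makes the argument go through. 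So your write-up is strictly more complete than the paper's on this point.
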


\begin{proof}
Clearly, $P(\varepsilon)= 1$.   Let $s_0=\sup_{D\in T_{\sigma0}}\sum_{\tau\in D}R(\tau)$ and $s_1=\sup_{D\in T_{\sigma1}}\sum_{\tau\in D}R(\tau)$. Given $\delta>0$, there are $D_0\in T_{\sigma0}$ and $D_1\in T_{\sigma1}$ such that 
\[
\sum_{\tau\in D_i}R(\tau)\geq s_i-\delta/2
\]
for $i=0,1$.  Then $D_0\cup D_1\in T_\sigma$, and hence
\[
\sup_{D\in T_\sigma}\sum_{\tau\in D}R(\tau)\geq\sum_{\tau\in D_0\cup D_1}R(\tau)\geq s_0+s_1-\delta,
\]
for every $\delta>0$. 
 Thus $P(\sigma)\geq P(\sigma0)+P(\sigma1)$.  Lastly,  $P(\sigma)$ is left-c.e.\ uniformly in $\sigma$, as $G$, $q$, and~$R$ are all computable.
\end{proof}

\begin{defn}
A network $q$ is \emph{elementary} if $q(\sigma,\tau)=1/2$ for all but finitely many $(\sigma,\tau)\in \N$.
\end{defn}

By the definition of a network $q$, it follows that the set of extra edges $\X$ is finite if $q$~is elementary. Since by definition networks~$q$ only take rational values, every elementary network~$q$ is computable.   Given a computable network $q$, we can write~$q$ as a limit of elementary networks~${(q_n)_{n\in\omega}}$ by requiring that
\begin{itemize}\label{elementary_approx}
\item[(i)] $q_n(\sigma,\tau)=q(\sigma,\tau)$ if $|\tau|\leq n$;
\item[(ii)] $q_n(\sigma,\tau)=1/2$ if $(\sigma,\tau)\in \N$ and $|\tau|>n$;
\item[(iii)] $q_n(\sigma,\tau)=0$ if $(\sigma,\tau)\in \X$ and $|\tau|>n$.
\end{itemize}

Note that these conditions imply that $q_{n-1}$ and $q_n$ agree on every edge $(\sigma,\tau)$ except possibly on edges $(\sigma,\tau)$ satisfying $|\tau|=n$.
We refer to such a sequence of elementary networks as the \emph{sequence of elementary restrictions} of $q$.  Moreover, we will refer to each $q_n$ as the \emph{level $n$ elementary restriction} of~$q$.

\subsection{The general template}\label{subsec-template}
The semi-measure $P$ that we construct will be one induced by a network flow $q$ as described in the previous paragraphs. Here, $q$ will be constructed through an infinite procedure which works in stages. At each stage $n$, an elementary network~$q_n$ together with its extra edge set $\X_n$ will be built. In the end we will then let $q=\lim_n q_n$ and $\X=\bigcup_n \X_n$. We first make some general informal remarks about the overall procedure, and then go on to describe in formal detail the individual stages.

\bigskip

\noindent The general construction template depends upon three parameters:
\begin{itemize}
\item[(1)] A computable function $\task\colon\omega\rightarrow\omega$, called the \emph{task function}, such that the values \[\task(0),\task(1),\task(2),\task(3),\dotsc\] follow the pattern
\[
0,1,0,1,2,0,1,2,3,0,1,2,3,4,\dotsc
\]
In particular, for each $i$, the set $\{n\colon\task(n)=i\}$ is infinite and $\task(n)\neq\task(n+1)$ for every~$n$.  Every node will be assigned a task; namely, each $\sigma\in\str$ will be assigned the task~$\task(|\sigma|)$.  For a given task $i$, the \emph{$i$-nodes} are the nodes $\sigma\in\str$ with~$\task(|\sigma|)=i$.
\item[(2)] A computable predicate $B(q',\sigma,\tau)$ which is defined for \textit{elementary} networks $q'$ on a $\str$-digraph $G$ and strings $\sigma$, $\tau$ such that both are $i$-nodes for the same $i\in\omega$.
\item[(3)] A computable, strictly increasing function $\ct\colon\omega\rightarrow\omega$.
\end{itemize}
The predicate $B$ will be determined by the requirements we are attempting to satisfy, while the function $\ct$ will be specifically used to provide the initial values for countdowns to expiration for certain nodes that are active in the construction, in a technical sense to be explained shortly.

We take action towards fulfilling the task $i$ if we add an extra edge connecting two $i$-nodes; we will refer to such an edge as an \emph{$i$-edge} (or as an edge that is \textit{assigned to} task $i$).  That is, an edge~$(\sigma,\tau)\in \E_G$ is an $i$-edge if $\task(|\sigma|)=\task(|\tau|)=i$.  Let $\X[i]$ be the set of extra edges assigned to task $i$.  Note that we never assign normal edges to any task $i$, since $\task(n)\neq\task(n+1)$ for every~$n$.

In the course of the construction, for $j<i$, we would ideally want to first perform all actions necessary for task $j$ before beginning to work on task $i$. That is, 
for every extra edge~$(\sigma,\tau)$ between a pair of $i$-nodes $\sigma$ and $\tau$ and for any extra edge $(\sigma',\tau')$ assigned to some task~$j$ with~$j<i$, we would like to have $|\tau'|<|\sigma|$.
But, in fact, during the construction we will not be able to always ensure this property.
After having added $(\sigma,\tau)$ for task $\task(|\sigma|)=\task(|\tau|)=i$ it may turn out later in the construction that further edges for task~$j$ need to be added. Adding them will then invalidate our previous actions for task~$i$. The edge $(\sigma,\tau)$ stays in the digraph, but we will consider it a failure\label{sdfsdsdfsdfsdgdhfgh}, as it does not help us achieve the desired goal for task $i$. While the presence of $(\sigma,\tau)$ also causes no harm, we will, at some later stage, have to completely restart the construction for task $i$. The construction can therefore be thought of as a type of finite injury argument.

For a given task $i$, we will need to talk about the minimal length of an $i$-node to which   an extra edge can be attached.  We thus define the following auxiliary function~$w$:  Let $q'$ be an elementary network on $G$, with the associated set of extra edges $\X'$ through which some of the flow passes.  Then for each $i\in\omega$, we define
\[
w(i,q')=\min\{n\colon\task(n)=i \;\wedge\; (\forall j<i)(\forall(\sigma,\tau)\in \X'[j])\; |\tau|<n\}.
\]
That is, $w(i,q')$ is the least $n$ such that (i)~$\task(n)=i$ and (ii)~every edge in $G$ assigned to task~$j$ for some $j<i$ ends in a node of length less than $n$.  

For an arbitrary (that is, not necessarily elementary) computable network~$q'$, $w(i,q')$ may be undefined in general. But in fact, for $q$'s built using the template described here,
$w(i,q')$ will always be well-defined by the above equation, and $w(i,q)=\lim_n w(i,q_n)$, where $q_n$ is the level~$n$ elementary restriction of $q$. 
The lengths $n$ where $w(i,q_{n-1})\neq w(i,q_n)$ correspond to the \qt{failures} described in the previous paragraph.

Another component of our construction is that at each stage, a number of nodes may be set as active, serving as candidates to which an extra edge may be attached.  
Before activation, all flow into a node $\sigma$ will be equally divided to flow into $\sigma$'s direct successor nodes $\sigma0$ and~$\sigma1$ through the corresponding  normal edges. To {\em activate} a node we reduce the flow from $\sigma$ into~$\sigma0$ and~$\sigma1$, resulting in a certain amount of flow into $\sigma$ being temporarily unused. We say that we have {\em delayed} part of the flow. In a later step we may then attach an extra edge to $\sigma$ and direct the delayed, leftover flow through this new edge.

More formally, for an elementary network $q'$, we have a function $d'$, called a \emph{flow-delay function}, which satisfies
\[
d'(\sigma)=1-q'(\sigma,\sigma 0)-q'(\sigma,\sigma1)
\]
for every $\sigma\in\str$.  This is precisely the proportion of flow into $\sigma$ that is prevented from flowing into~$\sigma0$ and~$\sigma1$.  The \emph{active} nodes consist of those nodes $\sigma$ such that $0<d'(\sigma)<1$; the construction will be such that if we block all of the flow through a node $\sigma$ by setting $d'(\sigma)=1$, then it, and all of its extensions, will never be activated from that point on.  Moreover, for~$j<i$, to enforce the requirement that all $j$-edges end before any $i$-edges begin, whenever we attach an extra edge to a $j$-node $\tau$, all active $i$-nodes whose length is less than $|\tau|$ become unusable as, by the conditions in the construction, we will never attach edges to such nodes. We will call such nodes {\em deactivated}. Intuitively, we can then think of the flow that was delayed at such nodes as wasted.

Next, given a node $\sigma$ to which we would like to attach an extra edge, there is a function~$\beta(\sigma,q',n)$ that selects (somewhat arbitrarily) a candidate $\tau\succ \sigma$ of length $n$ for connecting an edge between $\sigma$ and $\tau$ in an elementary network $q'$ in such a way as to satisfy the predicate~$B$, if such~a~$\tau$ exists.  Specifically,
\[
\beta(\sigma,q',n)=\min\{\tau\in2^n \colon \tau \succ \sigma \;\wedge\; \task(|\sigma|)=\task(|\tau|) \;\wedge\; B(q',\sigma,\tau)\},
\]
where the minimum refers to the lexicographic ordering of strings $\tau$ of length $n$.

\bigskip

After these informal remarks, we describe in detail how to construct the network $q$, with its set of extra edges $\X$ and its flow-delay function $d$: As mentioned at the start of this subsection, we first build a sequence $(q_n,\X_n,d_n)_{n\in\omega}$, where each $q_n$ is an elementary network with 
associated set of extra edges $\X_n$. For each $n\in \omega$ we will let $d_n$ denote the flow-delay function associated with~$q_n$. In the end we will set $q= \lim_nq_n$,
$\X=\bigcup_n \X_n$ and $d=\lim_n d_n$.  

\smallskip

The definition of the sequence $(q_n, \X_n, d_n)_{n\in\omega}$ proceeds in stages as follows:  For $n=0$, 
\[
q_0(\sigma,\tau)=\begin{cases}
1/2 & \text{if $\tau=\sigma0$ or $\tau=\sigma1$,}\\
0 & \text{otherwise}.
\end{cases}
\]
Clearly $d_0(\sigma)=0$ for all $\sigma\in\str$ and $\X_0=\emptyset$. 

Suppose we have defined $(q_{n-1},\X_{n-1},d_{n-1})$, where for all $(\sigma,\tau)\in \X_{n-1}$, $|\tau|<n$.  We will first define $\X_n, d_n$, and then $q_n$.  The goal of this stage of the construction is to attach an extra edge connecting a $\task(n)$-node whose length is strictly less than $n-1$ with a $\task(n)$-node of length~$n$. We consider two cases.

\bigskip
\noindent
{\bf Case 1:}  $w(\task(n),q_{n-1})=n$.  This means that the extra edges in $\X_{n-1}$ assigned to some task~$j<\task(n)$ terminate in nodes of length $\leq n-1$, and this is the least $n$ for which this holds.  This further implies that there is no active $\task(n)$-node of length less than $n$ to which we can attach an extra edge.  We thus take the following steps:
\begin{itemize}
\item[(i)]  Set $\X_n=\X_{n-1}$.
\item[(ii)] Set $d_n(\sigma)=
\begin{cases}
1/\ct(n) & \text{if  $|\sigma|=n$}\\
d_{n-1}(\sigma) & \text{otherwise.}\end{cases}$
\end{itemize}
  Setting $d_n(\sigma)=1/\ct(n)$ for each $\sigma$ of length $n$ has the effect of activating these nodes, in anticipation of attaching a $\task(n)$-edge to them later in the construction.  We  call this the \emph{initial activation} of the nodes, since Case~1 is the case where we begin working towards task $i$ (or where we begin \textit{anew} to work towards it, in case that a previous injury has occured for task~$i$ that requires us to start over).
  
  Recall that $\ct$ provides the initial value for a countdown mechanism that we will use during the construction; once we implement this template for a specific application, we will have to choose~$\ct$ carefully to ensure that a positive amount of flow stays in the network in the limit.

\bigskip
\noindent
{\bf Case 2:} $w(\task(n),q_{n-1})<n$.  Our hope in this case is that we can attach some extra edges from $\task(n)$-nodes of length $\geq w(\task(n), q_{n-1})$ to $\task(n)$-nodes of length $n$.  Thus we search for $\sigma\in\str$ such that the following four conditions hold:
\begin{itemize}
\item[$(a)$] $w(\task(n),q_{n-1})\leq|\sigma|<n$;
\item[$(b)$] $0<d_{n-1}(\sigma)<1$; 
\item[$(c)$] $\beta(\sigma,q_{n-1},n)$ is defined; and
\item[$(d)$] $\sigma\prec \rho$ implies that $(\sigma,\rho)\notin \X_{n-1}$.
\end{itemize}
Condition $(a)$ guarantees that the start of the new edge occurs beyond the end of any currently present
$j$-edge for $j<\task(n)$; in particular, this rules out attaching edges at deactivated nodes.  Condition $(b)$ guarantees that $\sigma$ is active (henceforth, we will refer to a node $\sigma$ such that $0<d_n(\sigma)<1$ as \emph{active at stage $n$}).  Condition $(c)$ guarantees that $\sigma$ is assigned to task $\task(n)$ and that there is a length~$n$ node that can serve as the endpoint of a new $\task(n)$-edge we want to attach at $\sigma$ (that is, the predicate $B$ is satisfied).  Finally, condition $(d)$ guarantees that no extra edge has been previously attached starting at $\sigma$.

Let $\C_n$ be the set of $\sigma\in\str$ such that conditions $(a)$--$(d)$ are satisfied.  Then we have two subcases to consider.

\bigskip
\noindent
{\bf Subcase 2.1:}  $\C_n\neq\emptyset$.  For every $\sigma \in \C_n$ and every $\tau \succ \sigma$ with $|\tau|=n$ we let 
\[
d_n(\tau)=
	\left\{
		\begin{array}{lll}
			0 & \mbox{if } \tau=\beta(\sigma,q_{n-1},n), \\
			d_{n-1}(\sigma)/(1-d_{n-1}(\sigma)) & \mbox{else}.
		\end{array}
	\right.
\]
For all other $\tau$ we let $d_n(\tau) = d_{n-1}(\tau)$.  

By condition~$(b)$ above, setting $d_n(\tau)=0$ makes $\tau$ inactive, meaning that we will not add any further $\task(n)$-edges to any extensions of $\tau$, with one possible exception:  
It may be that at a later stage $n'>n$ with $\task(n') < \task(n)$, a new $\task(n')$-edge is added, which at some even later stage $n''>n'>n$ with $\task(n'') = \task(n)$ would lead to Case~1 above occurring again for task~${\task(n'') = \task(n)}$.
This would in turn lead to all strings of length $n''$ getting initially activated for task $\task(n'') = \task(n)$ at stage $n''$ where we begin anew to work on that task. In this case we say that task $\task(n'') = \task(n)$ has been {\em injured} by task $\task(n')$.

When a node is assigned a non-zero delay value by the second line above, we call this its {\em non-initial activation}. This is because that new delay value at node $\tau$ is a consequence of an earlier  assignment of a non-zero delay value to the strictly shorter node $\sigma$.

Note that when initially activating a node $\sigma$, we assign a delay of the form $1/k$, where $\ct(|\sigma|)=k$ for some $k\in\omega$.  Moreover, the mapping $d\mapsto d/(1-d)$ used for non-initial activations maps such a number to $1/(k-1)$, which is then mapped to $1/(k-2)$, and so on.  Note further that the nodes where these new delay values are set are by construction $\task(n)$-nodes. The reciprocal of these assigned delay values are positive integers, and we can interpret them as a counter counting down  by $1$ along a path every time a $\task(n)$-edge branches off it; see Figure~\ref{fig_adding_edges}.

Even on the same path different tasks are initially activated separately at different nodes of appropriate length. The countdown happens separately for all tasks, as a new delay value assigned to an $i$-node depends on the delay value of an $i$-node of shorter length, and not on the delay values of $j$-nodes with $i\neq j$. It  therefore makes sense to talk about the \emph{$i$-counter} for task $i$ along a given path, and we will use this expression in the informal explanations in the sequel.

As we continue to add edges for task $\task(n)$ that branch off a path, the $\task(n)$-counter may eventually reach $1$ on some initial segment of that path. Such a node is then by definition inactive. (Formally, the counter reaching $1$ means that the delay value along the path has increased until all flow is blocked at a value of~$1$.) Once this happens, by construction, we stop attaching $\task(n)$-edges on any extension of that initial segment of the path. 

\begin{figure}
  \centering
  \def\svgwidth{16cm}
   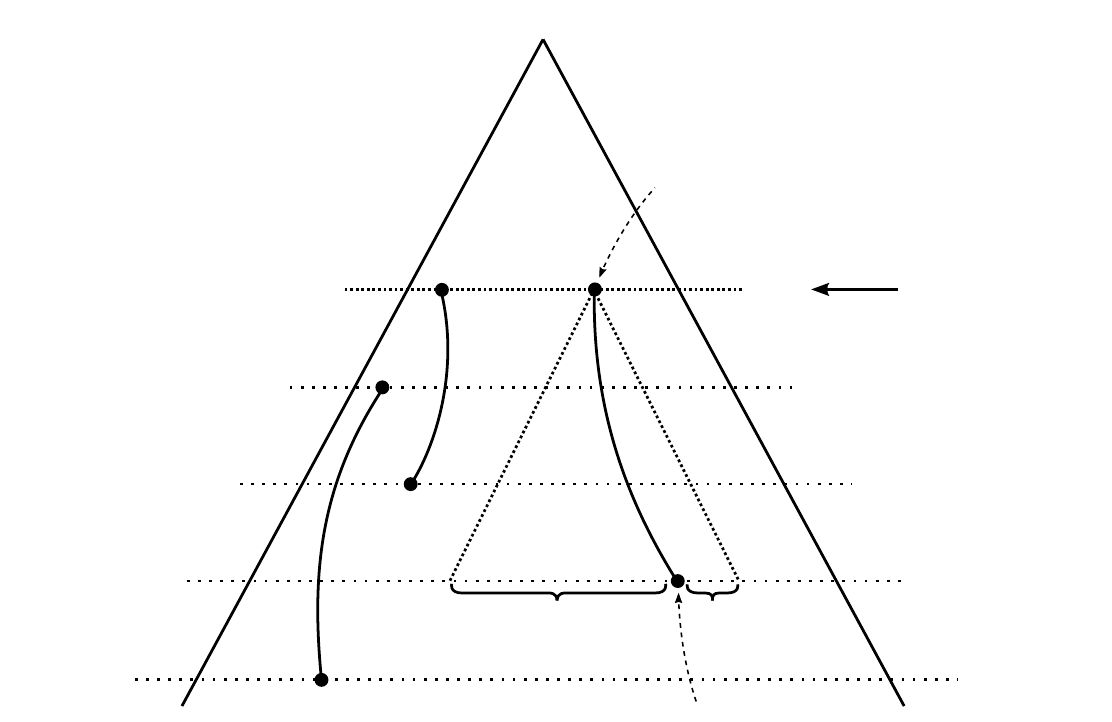
\caption{An edge for task $\task(n)$ is added. The root of the edge was initially activated with counter value $\ct(n)$. The node at the end of the new edge has delay value~$0$, thus is inactive. All other extensions of the root receive a positive delay value by non-initial activation, and thereby become active. The counter value on these nodes, which is the reciprocal of the value of $d$ on the respective node, has been reduced by $1$ compared to the counter value on the root.
Note how other, completely independent $\task(n)$-edges can occur off to the side.}
\label{fig_adding_edges}
\end{figure}

\smallskip

Next, we set
\[
\X_n=\X_{n-1}\cup\{(\sigma,\beta(\sigma,q_{n-1},n))\colon\sigma\in \C_n\}
\]
and
\[
q_n(\sigma,\tau)=
	\left\{
		\begin{array}{lll}
			\frac{1}{2}(1-d_n(\sigma)) & \mbox{if } \tau=\sigma0\;\text{or}\;\tau=\sigma1,\\
			d_{n}(\sigma) & \mbox{if } (\sigma,\tau)\in \X_n,\\
			0 & \mbox{otherwise.}
		\end{array}
	\right.
\]
Note that for $j>\task(n)$, $w(j,q_k)>n$
for all $k \geq n$.  In particular, we are now prevented from attaching an edge to any $j$-nodes that were active at the beginning of this stage; as mentioned above we call  such nodes {\em deactivated}.

\bigskip
\noindent
{\bf Subcase 2.2:} $\C_n=\emptyset$.  Then we set $d_n=d_{n-1}$, $\X_n=\X_{n-1}$, and $q_n=q_{n-1}$.  No new nodes are activated, nor do any active nodes become deactivated.

\bigskip

\noindent To finalize the outline of the construction template we lastly set $q=\lim_n q_n$, $d=\lim_n d_n$, and $\X=\bigcup_n \X_n$. 
It is not difficult to check that $q$ and $d$ are computable functions and that $\X$ is a computable set. It then follows from Lemma \ref{lem:semimeasure} that the resulting $q$-flow $P$, as in Definition~\ref{def:q-flow}, is a left-c.e.\ semi-measure.

\subsection{Verification of the general template}

We now work to establish the desired properties of the constructed objects $q$, $d$, $\X$, $R$, $P$, and so on. 
For the sake of notational simplicity, during this verification, we will again use the letters $q_n$, $d_n$, and $\X_n$, for $n\in\omega$, to refer to the finite approximations of $q$, $d$, and $\X$ that we built in the previous subsection. In particular note that, for~$q$, these finite approximations~$q_n$, $n\in \omega$, coincide with the sequence of elementary restrictions discussed on page~\pageref{elementary_approx}.

\bigskip

Before we implement this template, we show that a number of features of the construction can be established independently of the concrete implementation. First,
the following two lemmata show that the construction prevents certain relative arrangements of extra edges.
\begin{lem}\label{djsdjhkdfgjhqweshfasjddbwe}
	Assume that an extra edge $(\zeta,\xi)$ is added during the construction. Then no node~$\sigma$ such that $\task(\zeta)=\task(\sigma)$ and $\zeta \prec \sigma$ and $|\sigma|\leq |\xi|$ can ever become active during the construction.
\end{lem}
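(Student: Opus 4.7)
The plan is to argue by double induction: an outer strong induction on the stage $n=|\xi|$ at which the edge $(\zeta,\xi)$ enters $\X$, together with an inner strong induction on $|\sigma|$ for a fixed choice of $(\zeta,\xi)$. Write $i:=\task(|\zeta|)$. The opening observation is that condition~$(a)$ for $\zeta\in\C_n$ forces $w(i,q_{n-1})\leq|\zeta|$, so every $j$-edge with $j<i$ in $\X_{n-1}$---and, since $\X$ only grows over time, in every $\X_{m-1}$ with $m\leq n$---terminates at a node of length strictly below $|\zeta|$. Hence $w(i,q_{m-1})\leq|\zeta|<m$ for every $m\in(|\zeta|,|\xi|]$ with $\task(m)=i$, which already excludes Case~1 of the template at stage $m$: no length-$m$ descendant of $\zeta$ of matching task can be initially activated.

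Assuming toward a contradiction that $\sigma$ does become active, $\sigma$ must be activated at stage $m:=|\sigma|$ via Subcase~2.1, through some $\sigma'\in\C_m$ with $\sigma'\prec\sigma$. Because $\zeta\prec\sigma$ as well, $\sigma'$ and $\zeta$ are prefix-comparable, and I would split into three sub-cases. If $\sigma'=\zeta$, then an extra edge is attached at $\zeta$ at stage $m<n$, violating condition~$(d)$ for $\zeta\in\C_n$. If $\zeta\prec\sigma'$, then $|\sigma'|\in(|\zeta|,m)$ and $\task(|\sigma'|)=i$ (the latter is forced by the definedness of $\beta(\sigma',q_{m-1},m)$), so the inner induction applied to $\sigma'$ in place of $\sigma$ gives that $\sigma'$ is not active, contradicting condition~$(b)$ for $\sigma'\in\C_m$. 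Finally, if $\sigma'\prec\zeta$, then at stage $m<n$ the construction adds an extra edge $(\sigma',\nu)$ with $\nu:=\beta(\sigma',q_{m-1},m)$ of length $m$; by the outer induction the lemma already holds for this strictly earlier edge, and applied to $\tau:=\zeta$---a descendant of $\sigma'$ with $\task(|\zeta|)=i$ and $|\zeta|<|\nu|$---it forces $\zeta$ to be never active, contradicting condition~$(b)$ for $\zeta\in\C_n$.

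The third sub-case is the main obstacle: the local combinatorics at stage $m$ do not by themselves prevent an $i$-ancestor of $\zeta$ carrying no attached extra edge from lying in $\C_m$, and the only way I see to exclude it is to feed the conclusion of the lemma at a strictly earlier edge-addition stage back into the current argument---which is precisely what the primary induction on the stage of edge addition is designed to enable. Once all three sub-cases are dispatched, $\sigma$ cannot be activated at stage $|\sigma|$ through either Case~1 or Subcase~2.1, and since $d_s(\sigma)$ is fixed at the stage $s=|\sigma|$ and remains $0$ thereafter, $\sigma$ is never active.
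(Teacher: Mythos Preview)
Your proof is correct and carefully organized, but it takes a genuinely different route from the paper's. The paper argues directly: since $\zeta$ stays active and uninjured throughout the interval $[|\zeta|,|\xi|]$, initial activation of $\sigma$ is ruled out; it then asserts that any non-initial activation of $\sigma$ traces back through a chain of $i$-edges $(\nu_1,\mu_1),\dots,(\nu_\ell,\mu_\ell)$ with $\nu_1=\zeta$, so that the earlier edge $(\zeta,\mu_1)$ violates condition~(d) for $\zeta\in\C_n$. You instead run an explicit double induction and, at the inductive step, split on whether the immediate activator $\sigma'$ equals $\zeta$, properly extends $\zeta$, or lies strictly above $\zeta$.

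The real divergence is your third sub-case. The paper's claim that the chain necessarily begins at $\nu_1=\zeta$ is stated without argument, and the possibility that the activator $\sigma'$ satisfies $\sigma'\prec\zeta$---so that the $i$-edge added at stage $m$ passes over $\zeta$ entirely---is not explicitly addressed. Your outer induction is exactly what disposes of this case: the lemma applied to the strictly earlier edge $(\sigma',\nu)$ forces $\zeta$ itself never to be active, contradicting $\zeta\in\C_n$. So your argument is more self-contained; the paper's is shorter but leans on an intuition about the chain structure that you have made rigorous.

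One small remark: you tacitly use $m<n$ in the first and third sub-cases, i.e., $|\sigma|<|\xi|$. At the boundary $|\sigma|=|\xi|$ the nodes in $\fan(\zeta,\xi)$ generally \emph{do} become active, so the lemma as literally stated (with $|\sigma|\leq|\xi|$) fails there; your argument proves the strict-inequality version, which is also all the paper ever invokes (in the proof of Lemma~\ref{fact_constr}, where $\sigma\prec\xi$).
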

\begin{proof}
	The fact that $(\zeta,\xi)$ is added implies that $\zeta$ was activated at stage $|\zeta|$ and cannot have been deactivated until after stage $|\xi|$.
	This implies in particular that between stages~$|\zeta|$ and~$|\xi|$ no injury of task $\task(\zeta)$ occured, so that~
	$\sigma$ cannot have been initially activated at stage~$|\sigma|$.
	Assume that $\sigma$ was activated non-initially. 
	Then there must exist a sequence of extra $\task(\zeta)$-edges
	\[(\nu_1, \mu_1), (\nu_2,\mu_2), (\nu_3,\mu_3), \dots, (\nu_\ell, \mu_\ell)\]
	such that for all $2 \leq i \leq \ell$ we have
	\[\zeta=\nu_1, \quad  \nu_{i-1} \prec \nu_i,\quad |\mu_{i-1}|=|\nu_i|,\quad   \mu_{i-1}\neq\nu_i,\quad |\mu_\ell| = |\sigma| \quad \text{and} \quad 
	\mu_\ell \neq \sigma,\]
	 see Figures~\ref{fig_adding_edges} and~\ref{edges_along_a_path}. But, by condition~(d) in Case~2 of the construction, the presence of the edge~$(\zeta, \mu_1)$ 
	would have precluded~$(\zeta,\xi)$ from having been added later, a contradiction.	
\end{proof}
\begin{lem}\label{fact_constr}
There do not exist strings $\zeta \prec \sigma \prec \xi$ and $\zeta \prec \sigma \prec \tau$ such that $|\xi|\leq|\tau|$ and $(\zeta,\xi) \in \X$ and $(\sigma,\tau) \in \X$. 
\end{lem}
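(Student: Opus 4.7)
The plan is to argue by contradiction: suppose $\zeta,\sigma,\xi,\tau$ as in the hypothesis exist, set $a := \task(|\zeta|) = \task(|\xi|)$ and $b := \task(|\sigma|) = \task(|\tau|)$, and split on the relation between $a$ and $b$.

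If $a=b$, then $\task(\sigma)=\task(\zeta)$, $\zeta \prec \sigma$, and $|\sigma|<|\xi|$, so Lemma~\ref{djsdjhkdfgjhqweshfasjddbwe} applied to $(\zeta,\xi)$ forbids $\sigma$ from ever being active, contradicting $\sigma \in \C_{|\tau|}$. If $a \neq b$, then because any extra edge is added at a stage equal to its endpoint's length and its task matches $\task$ of that stage, $|\xi|\neq|\tau|$; combined with $|\xi|\leq|\tau|$ this forces $|\xi|<|\tau|$ and hence $(\zeta,\xi)\in\X_{|\tau|-1}$. If moreover $a<b$, condition~(a) for $\sigma \in \C_{|\tau|}$ gives $w(b,q_{|\tau|-1})\leq|\sigma|$, while the task-$a$ edge $(\zeta,\xi)\in\X_{|\tau|-1}$ forces $|\xi|<w(b,q_{|\tau|-1})\leq|\sigma|$, contradicting $|\sigma|<|\xi|$.

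The main obstacle is the remaining subcase $a>b$. I will trace when $\sigma$ was activated. A non-initial activation at stage $|\sigma|$ would place a task-$b$ edge ending at length $|\sigma|$ into $\X_{|\sigma|}\subseteq\X_{|\xi|-1}$; but condition~(a) on $\zeta\in\C_{|\xi|}$ gives $w(a,q_{|\xi|-1})\leq|\zeta|$, so every task-$<a$ edge of $\X_{|\xi|-1}$ (in particular any task-$b$ edge, since $b<a$) ends at length $<|\zeta|<|\sigma|$ --- contradiction. So $\sigma$ must have been \emph{initially} activated at stage $|\sigma|$, yielding $w(b,q_{|\sigma|-1})=|\sigma|$.

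To close this subcase I exploit the combinatorics of $\task$: parse $\omega$ into the successive blocks $B_k$ ($k\geq 1$) of length $k+1$ with contents $0,1,\dotsc,k$ in order, and let $k_\sigma$ be such that $|\sigma|\in B_{k_\sigma}$, so that $|\sigma|$ sits at the relative position $b$. Using $|\zeta|<|\sigma|$ and $\task(|\zeta|)=a>b$, a brief case check pins $|\zeta|$ inside $B_{k_\sigma-1}$ at relative position $a$ (the degenerate situations, where $|\sigma|$ is the first task-$b$ position or $k_\sigma=b+1$, are immediate because no suitable $|\zeta|$ can exist at all in either case). Let $n^*$ denote the previous task-$b$ length, which lies in $B_{k_\sigma-1}$ at relative position $b$. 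The equation $w(b,q_{|\sigma|-1})=|\sigma|$ demands a task-$<b$ edge $(\zeta',\xi')\in\X_{|\sigma|-1}$ with $|\xi'|\geq n^*$, while condition~(a) on $(\zeta,\xi)$ further forces $|\xi'|<|\zeta|$. But $[n^*,|\zeta|-1]$ consists exactly of the relative positions $b,b+1,\dotsc,a-1$ of $B_{k_\sigma-1}$, carrying tasks $b,b+1,\dotsc,a-1$, all $\geq b$; no position there has task $<b$, so no such $(\zeta',\xi')$ exists. This last step is the genuine difficulty: neither Lemma~\ref{djsdjhkdfgjhqweshfasjddbwe} nor the monotonicity of $w$ alone handles the subcase $a>b$ with $\sigma$ initially activated, and one must use the concrete fact that within the `upward' portion of a $\task$-block all tasks are at least as large as the task at any earlier position of that portion.
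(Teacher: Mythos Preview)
Your proof is correct and follows the same case decomposition as the paper (split on whether $\task(|\zeta|)$ is equal to, less than, or greater than $\task(|\sigma|)$, and in the last case on whether $\sigma$ was initially or non-initially activated). For the cases $a=b$, $a<b$, and $a>b$ with non-initial activation, your arguments match the paper's essentially verbatim.

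The genuine difference is in the subcase $a>b$ with $\sigma$ initially activated. The paper handles this in one sentence: it asserts that the task-$j$ edge (with $j<b$) that ``caused'' the initial activation of $\sigma$ ``would have also deactivated $\zeta$, since $j<\task(\sigma)<\task(\zeta)$''. But deactivation of $\zeta$ requires that edge's endpoint to exceed $|\zeta|$, and the paper offers no argument for this. Your block-structure computation is precisely what is needed here: the cause edge ends at some task-$<b$ position in $[n^*,|\sigma|)$, and all such positions lie in block $B_{k_\sigma}$ (at relative positions $0,\dotsc,b-1$), hence strictly above any task-$a$ position in $[n^*,|\sigma|)$, which lies in $B_{k_\sigma-1}$. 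So you have not merely taken a different route --- you have made explicit a step the paper glosses over.

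One small presentational point: your claim that ``a brief case check pins $|\zeta|$ inside $B_{k_\sigma-1}$'' is not quite right as stated. From $|\zeta|<|\sigma|$ and $\task(|\zeta|)=a>b$ alone, $|\zeta|$ could sit at relative position $a$ of any block $B_k$ with $a\le k\le k_\sigma-1$. What is true is that if $|\zeta|$ lies in a block earlier than $B_{k_\sigma-1}$ then $|\zeta|<n^*$, so the interval $[n^*,|\zeta|-1]$ is empty and the required task-$<b$ endpoint $|\xi'|$ cannot exist there --- contradiction immediately. Only when $|\zeta|>n^*$ (forcing $|\zeta|$ into $B_{k_\sigma-1}$ at position $a$) does your final-paragraph analysis of the tasks in $[n^*,|\zeta|-1]$ apply. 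This is a reordering of your case analysis, not an error in the mathematics.
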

Note that without the condition ``$|\xi|\leq|\tau|$'' the statement is false, as typically there are many pairs of extra edges
for which that condition does not hold but which satisfy the other conditions in the statement.
\begin{proof} 
	Assume for a contradiction that extra edges $(\zeta,\xi)$ and $(\sigma,\tau)$ as in the statement exist.

	First assume that both are assigned to the same task; that is, $\task(\zeta)=\task(\xi)=\task(\sigma)=\task(\tau)$. 
	Then by Lemma~\ref{djsdjhkdfgjhqweshfasjddbwe}, the fact that $(\zeta,\xi)$ was added implies that 
	$\sigma$~is never activated, and thus 
	$(\sigma,\tau)$~cannot have been added, 
 which contradicts our initial assumption.

	Thus, $(\zeta,\xi)$ and $(\sigma,\tau)$ must be assigned to two different tasks. 
	(In particular, in this case, the strict inequality $|\xi|<|\tau|$ must hold.) We distinguish two cases.

	If $\task(\zeta)=\task(\xi) < \task(\sigma) =\task(\tau)$, then even if  $\sigma$~was activated at stage $|\sigma|$, the addition of $(\zeta,\xi)$ would have constituted an injury of task $\task(\sigma)$ that would have led to $\sigma$ becoming deactivated, which means that $(\sigma,\tau)$ could not have been added later, a contradiction.

	If, on the other hand, $\task(\sigma)=\task(\tau) < \task(\zeta)=\task(\xi)$, then the fact that $(\sigma,\tau)$ was added implies that $\sigma$ must have been activated at stage $|\sigma|$. This could be for two possible reasons:

	Either $w(\task(|\sigma|),q_{|\sigma|-1})=|\sigma|$ held at stage $|\sigma|$ and thus $\sigma$ was initially activated as described in Case~1. The cause for that new initial activation, namely the addition of some edge for some task  $j < \task(\sigma)$, would have also deactivated $\zeta$, since 
	$j < \task(\sigma) < \task(\zeta)$, and that would have precluded the addition of $(\zeta,\xi)$ later, again a contradiction.

	Or, if $\sigma$ was non-initially activated due to Subcase~2.1, then that must have been due to an extra edge $(\nu,\mu)$ for task $\task(\sigma)$ with $|\mu|=|\sigma|$ having been added at stage $|\sigma|$. Then the addition of that edge would have injured $\task(\zeta)$, which again would have deactivated $\zeta$, yet another contradiction.
	
\medskip
	
	In summary, there is no scenario that allows the presence of both  $(\zeta,\xi)$ and $(\sigma,\tau)$ in the digraph at the same time.	
\end{proof}

Next, we establish that the work towards every individual task terminates eventually.

\begin{lem}[Stability Lemma]\label{finite_i_edges}
For every $i\in\omega$, $\X[i]$ is finite and $w(i,q)<\infty$.
\end{lem}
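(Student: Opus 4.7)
The plan is to proceed by strong induction on $i$, with induction hypothesis that $|\X[j]| < \infty$ and $w(j, q) < \infty$ for every $j < i$. Finiteness of $w(i,q)$ will be the easier half: the map $n \mapsto w(i, q_n)$ is monotonically non-decreasing, since $\X_n[j]$ only grows with $n$ and this only strengthens the defining constraint. By the inductive hypothesis, the sets $\X[j]$ for $j<i$ are all finite, so there is a stage $N$ past which no $j$-edge with $j < i$ is ever added. Hence $w(i, q_n) = w(i, q_N) =: w_i$ for all $n \geq N$, and $w_i$ is finite, being determined by the finite collection of endpoint lengths of lower-task edges in $\bigcup_{j < i} \X[j]$.

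The main work is to bound $|\X[i]|$. Let $M = \max(N, w_i)$. Only finitely many $i$-edges can have been added by stage $M$, since only finitely many stages have elapsed, so it suffices to bound those added at stages $n > M$. At such stages, three useful facts hold simultaneously: (a) no new $j$-edges with $j < i$ are added, by the choice of $N$; (b) Case~1 for task~$i$ never re-triggers, since Case~1 requires $w(i, q_{n-1}) = n$, but here $w(i, q_{n-1}) = w_i \leq M < n$; and (c) no $i$-node can become deactivated, as deactivations of $i$-nodes arise only from edges of lower tasks. So after~$M$, the only mechanism affecting task~$i$ is Case~2.1.

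I would then introduce a potential argument on the multi-set of ``counter values''. For each stage $n \geq M$, let
\[U_n := \bigl\{\, 1/d_n(\sigma) \mid \sigma \text{ is an active } i\text{-node}, \ |\sigma| \geq w_i, \text{ no extra edge emanates from } \sigma\,\bigr\}\]
taken as a multi-set. By inspection of the construction, every $d_n(\sigma)$ arising from an initial activation has the form $1/\ct(m)$, while the non-initial activation formula $d \mapsto d/(1-d)$ sends $1/k$ to $1/(k-1)$; hence every entry of $U_n$ is a positive integer $\geq 2$, and $U_M$ is finite. When Case~2.1 fires at stage $n > M$ with sources $\sigma_1, \ldots, \sigma_k$, the multi-set $\{1/d_{n-1}(\sigma_j) : j \leq k\}$ is removed from $U$ (each $\sigma_j$ now has an attached edge), while the non-chosen length-$n$ extensions of each $\sigma_j$ enter $U$ with counter value $(1/d_{n-1}(\sigma_j))-1$, which is strictly less than $1/d_{n-1}(\sigma_j)$, or fail to enter at all (if that value would equal $1$, making them inactive). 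This transition is a strict descent of $U_n$ in the Dershowitz--Manna multi-set ordering on finite multi-sets of positive integers, which is well-founded; hence only finitely many such transitions can occur past $M$, and $|\X[i]| < \infty$ follows.

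The main subtlety I expect is guaranteeing that no other mechanism can alter $U_n$ past stage $M$---in particular, that no fresh Case~1 trigger for task~$i$ can reintroduce a large counter value and so reset the potential. This is exactly what the choice $M \geq w_i$ (together with $w(i, q_n)$ stabilizing by stage $N \leq M$) purchases, and it is also the place where the inductive hypothesis is genuinely used.
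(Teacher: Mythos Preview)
Your proof is correct and takes a genuinely different route from the paper's. Both arguments share the same inductive scaffolding: assume $\X[j]$ is finite for all $j<i$, deduce that $w(i,q)$ stabilizes at some finite $w_i$, and then argue that only finitely many $i$-edges can be added thereafter. The divergence is in that last step.

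The paper's argument is topological. It defines, for each string $\sigma$ with $|\sigma|\geq w(i,q)$, a quantity $u(\sigma)$ (essentially the current $i$-counter value at the last point where an $i$-edge branched off the path to $\sigma$) and argues that $u$ is non-increasing along paths, integer-valued, and that the position at which $u$ stabilizes along a path $A$ is a continuous function of $A$. Compactness of $\cs$ then yields a uniform bound $N$ beyond which $u$ is constant along every path, and one checks that no $i$-edge can be attached below level $N$ without forcing $u$ to drop, a contradiction.

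Your argument is purely combinatorial: you package all live $i$-counters into a single finite multiset and observe that each Case~2.1 transition is a strict Dershowitz--Manna descent (one element $k$ is removed and replaced by finitely many copies of $k-1$, or by nothing if $k=2$). Well-foundedness of the multiset order on $\omega$ finishes it.

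What each buys: the paper's path-based function $u$ makes the geometric picture of Figure~\ref{edges_along_a_path} explicit and foreshadows how the counter will be used in the \nameref{lem:extension}. Your approach is shorter, avoids the compactness step, and makes the termination mechanism transparent. The one point worth stating more explicitly in your write-up is why $U_M$ is finite (because $d_M(\sigma)=0$ for all $|\sigma|>M$, so only finitely many nodes are active at any stage) and why stages $n>M$ with $\task(n)\neq i$ leave $U_n$ unchanged (such stages only modify $d$-values at $\task(n)$-nodes and only add $\task(n)$-edges); you allude to both but a sentence each would close the argument cleanly.
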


\begin{proof}
First, observe that if $\X[j]$ is finite for every $j<i$, then $w(i,q)<\infty$. It is therefore sufficient to prove the first part of the statement.

So suppose that $i$ is minimal such that $\X[i]$ is infinite. Then by the previous observation we have~$w(i,q)<\infty$. For $\sigma$ with $|\sigma|\geq w(i,q)$, define $m_\sigma$ to be the maximal $m>w(i,q)$ such that there is an edge ${(\sigma\uh m,\tau) \in \X[i]}$ where $\tau$ is incomparable with $\sigma$. 
If no such $m$ exists, set~${m_\sigma = w(i,q)}$. 

Then define a function $u$ via
\[
u(\sigma)=\begin{cases}
1/d(\sigma\uh m_\sigma) &  \text{if } d(\sigma\uh m_\sigma)>0, |\sigma|\geq w(i,q), \\
\ct(w(i,q)) & \text{if } |\sigma| < w(i,q).\\
\end{cases}
\]
Note that these two cases are exhaustive; to see this assume that $|\sigma|\geq w(i,q)$. If $m_\sigma=w(i,q)$, then by construction $d(\sigma \uh m_\sigma) = 1/\ct(m_\sigma) >0$. The only other possibility is that $m_\sigma$ is 
the maximal $m>w(i,q)$ such that there is an edge ${(\sigma\uh m,\tau) \in \X[i]}$ where $\tau$ is incomparable with~$\sigma$. But then $d(\sigma \uh m_\sigma) > 0$ as well, as otherwise the edge $(\sigma\uh m_\sigma,\tau)$ would not have been added according to the conditions in the construction.

We claim that $u(\sigma)$ is an upper bound on the number of possible $i$-edges branching off below length $\max(w(i,q),|\sigma|)$ from any path going through $\sigma$.

First, consider $\sigma$ meeting the conditions of the first line of the definition, and such that an edge $(\sigma\uh m_\sigma,\tau)$ as in the definition of $m_\sigma$ exists.  Since by the choice of $m_\sigma$ the edge~$(\sigma\uh m_\sigma,\tau)$ is the last edge branching off above $\sigma$, and by the discussion of the $i$-counter mechanism above, we know that then at most $\frac{1}{d(\sigma\uh m_\sigma)}-2$ further $i$-edges can branch off below~$\sigma$ from any path extending $\sigma$, and the claim in this case follows.

Secondly, consider $\sigma$ meeting the conditions of the first line of the definition, but where an edge of the form $(\sigma\uh m_\sigma,\tau)$ as in the definition of $m_\sigma$ does {\em not} exist. For those $\sigma$ we have that a parent~$\rho$ of~$\sigma$ with $|\rho|=w(i,q)$ has been initially activated, but that there is no extra $i$-edge that branches off between $\rho$ and~$\sigma$. Again by the discussion of the $i$-counter mechanism, we know that then at most $\ct(w(i,q)) - 1$ $i$-edges can branch off below $\sigma$ from any path extending~$\sigma$. Since 
\[u(\sigma)=1/d(\sigma\uh m_\sigma)=1/d(\sigma\uh w(i,q))=\ct(w(i,q)),\] the claim in this case follows.

Lastly, consider $\sigma$  satisfying $|\sigma|<w(i,q)$. Let $\tau \succ \sigma$ be of length~$w(i,q)$. Then $\tau$~is initially activated with $d(\tau)=1/\ct(|\tau|)$. 
By the definition of $u$, $u(\sigma)=u(\tau)$, and we can argue as in the previous paragraph to conclude that at most $\ct(w(i,q)) - 1$ $i$-edges can branch off below length~$w(i,q)$ from any path extending $\sigma$.

It should now be clear that $u$ is constant on all strings $\sigma$  with $|\sigma| \leq w(i,q)$; and that for arbitrary strings $\sigma$ and $\tau$ with $\sigma \preceq \tau$ we have $u(\sigma) \geq u(\tau)$. We then define the function~${\widehat u\colon\cs\rightarrow\omega}$ by letting, for every $A \in \cs$,
\[
\widehat u(A) = \min\{n\colon     u(A \uh n)= u(A \uh \ell) \textnormal{ for all } \ell \geq n\}.
\]
We claim that the function $\widehat u$ is continuous. This is because $(a)$ $u$ is non-increasing over longer and longer initial segments  of a path $A$, $(b)$ $u$ only takes integer, positive values, and $(c)$ a decrease in~$u$ cannot happen arbitrarily late along $A$. This last point $(c)$ follows from the two facts that {\em (i)} at every node at most one edge starts (by construction) and that {\em (ii)} for an $i$-edge branching off $A$ at $A\uh \ell$ we must have that $\ell$ is either $w(i,q)$ or the length of the endpoint of the previous $i$-edge branching off $A$; otherwise $A\uh \ell$ would not be active; see Figure~\ref{edges_along_a_path}.
Therefore, for a long enough initial segment $A \uh k$ of $A$, $u(A \uh k)$ has stabilized; meaning that $A \uh k$ already determines~$\widehat u(A)$. 

\begin{figure}
	\centering
	\def\svgwidth{16cm}
	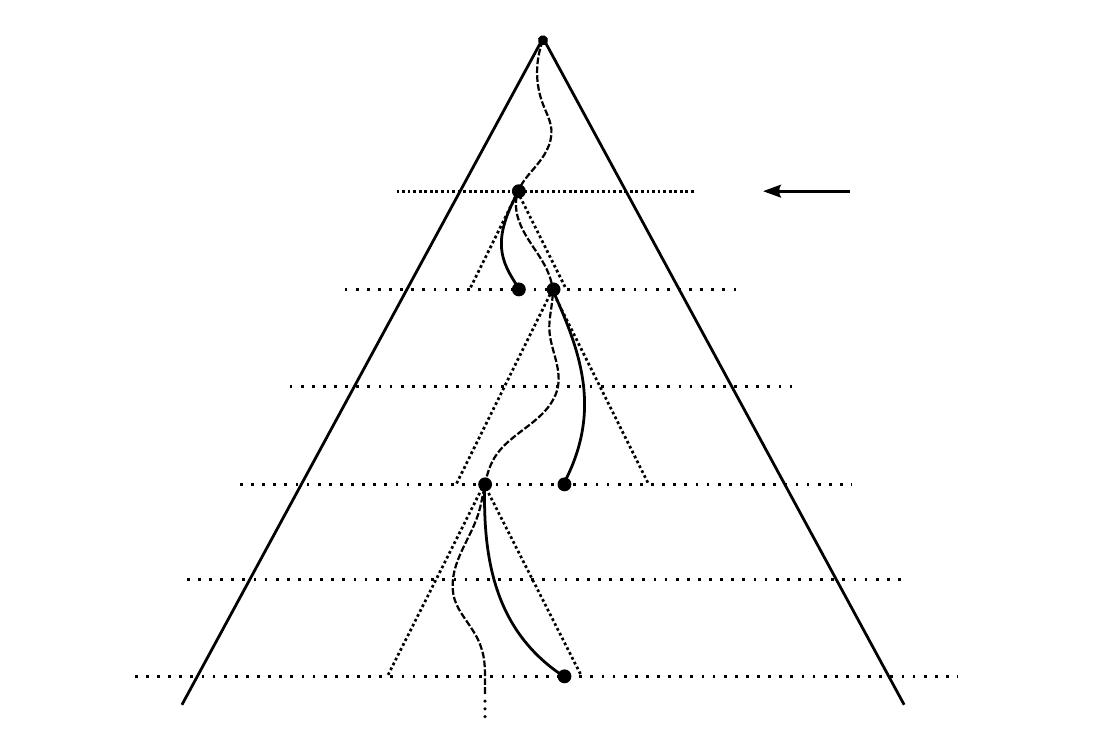
	\caption{A sequence of extra edges branching off a given path $A$. Note how the length of the start point of every edge has to coincide with the length of the endpoint of the previous edge branching off the path.}
	\label{edges_along_a_path}
\end{figure}

Because $2^\omega$ is compact, $\widehat u$ is bounded by some $N \in \omega$, meaning in particular that ${u(\sigma)=u(\sigma\uh N)}$ for all $\sigma$  with  $|\sigma|\geq N$. But then no new $i$-edge $(\sigma,\tau)$ can be attached to any such~$\sigma$, as that would imply $u(\tau) < u(\sigma)$, contradicting the choice of $N$.  Thus $\X[i]$~cannot be infinite.
\end{proof}

\begin{defn}
For a finite sequence $\sigma\in\str$ we call an infinite sequence $X\in\cs$ an \emph{$i$\nobreakdash-continuation} of $\sigma$ if $i=\task(|\sigma|)$, $\sigma \prec X$, and $B(q_{n-1},\sigma,X \uh n)$ holds for almost all $n$ with~${\task(n)=i}$. 
\end{defn}
\begin{defn}
A sequence $X$ is called \emph{$i$-discarded} if $d(X \uh n)=1$ for some $n$ where $\task(n)=i$.
\end{defn}
Note that a sequence $X\in\cs$ becomes $i$-discarded if there exists an initial segment $X \uh k$ such that the counter for task $i$ has reached the final value $1$ on $X \uh k$. By the conditions stated in Case 2 of the construction, below such an $X \uh k$ no further extra edges for task $i$ will branch off of $X$, hence the name ``discarded.''

\begin{lem}[Edge Existence Lemma]\label{lem:extension}
Assume that for $X\in\cs$ and for all $k\in\omega$  such that $\task(k)=i$ it holds that $X \uh k$ has an $i$-continuation and that $X$ is not $i$-discarded. Then $X$ contains an $i$-edge~$(\sigma,\tau)$; that is, $\sigma\prec \tau\prec X$.
\end{lem}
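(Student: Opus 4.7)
The plan is to show that if no $i$-edge lies along $X$, then the $i$-counter along $X$ is forced down to $1$, contradicting the assumption that $X$ is not $i$-discarded.

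First, by the Stability Lemma there is a stage $n^*$ after which no $j$-edge for any $j<i$ is added, and thereafter $w(i,q_n)=w(i,q)=:v$. Since the last such $j$-edge ends at length $n^*$, the definition of $w$ forces $v>n^*$, so Case~1 is triggered for task~$i$ at stage $v$, and in particular $X\uh v$ is initially activated with $d(X\uh v)=1/\ct(v)$; no subsequent injury can alter this value.

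Now assume towards a contradiction that no $i$-edge lies along $X$. We inductively define nodes $\sigma_0\prec\sigma_1\prec\cdots\prec X$ with $\sigma_0=X\uh v$ such that $\task(|\sigma_\ell|)=i$ and $d(\sigma_\ell)=1/(\ct(v)-\ell)$. Given such a $\sigma_\ell$, we check the four conditions of Case~2 for $\sigma_\ell$ at sufficiently large stages $n$ with $\task(n)=i$: $(a)$ amounts to $v\leq|\sigma_\ell|<n$; $(b)$ is the inductive hypothesis on $d(\sigma_\ell)$; $(c)$ is supplied by the hypothesis that $X\uh|\sigma_\ell|$ has an $i$-continuation $Y$, since then $B(q_{n-1},\sigma_\ell,Y\uh n)$ holds for cofinitely many $n$ with $\task(n)=i$, making $\beta(\sigma_\ell,q_{n-1},n)$ defined; and $(d)$ holds until an edge is first attached at $\sigma_\ell$. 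Consequently, at some stage $n$, Subcase~2.1 triggers for $\sigma_\ell$ and adds an $i$-edge $(\sigma_\ell,\beta(\sigma_\ell,q_{n-1},n))$; by our assumption its endpoint is not on $X$, so the second clause of Subcase~2.1 sets $d(X\uh n)=d(\sigma_\ell)/(1-d(\sigma_\ell))=1/(\ct(v)-\ell-1)$, completing the induction with $\sigma_{\ell+1}:=X\uh n$.

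The main obstacle is to see that this induction cannot continue forever: the counter $\ct(v)-\ell$ strictly decreases by $1$ at each step and must remain at least $2$, because otherwise $d(\sigma_{\ell+1})=1$ would make $X$ $i$-discarded. Hence after at most $\ct(v)-2$ iterations the induction must break, yielding the required contradiction, so some $i$-edge must lie along $X$.
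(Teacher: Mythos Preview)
Your proof is correct and follows essentially the same idea as the paper's: both trace the chain of $i$-edges branching off $X$ starting at the initially activated node $X\uh w(i,q)$ and argue it must terminate with an edge lying along $X$. The paper phrases this slightly differently---it takes the \emph{maximal} $m$ with $\task(m)=i$ and $d(X\uh m)>0$ (using finiteness of $\X[i]$ from the Stability Lemma for existence of the maximum) and observes that the edge attached there must end on $X$, else the fan node $X\uh n$ would violate maximality---whereas you explicitly run the $i$-counter down from $\ct(v)$ by induction; but these are two presentations of the same mechanism.
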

\begin{proof}
Assume that $X$ is not $i$-discarded. Let $m$ be maximal with $\task(m)=i$ and~${d(X \uh m)>0}$. 
We know by the following argument that $m$ is defined: First, an $m$ as described exists, since by \nameref{finite_i_edges}~\ref{finite_i_edges} we have that $w(i,q)$ is finite, and, by construction, $d(X \uh w(i,q))$~is set to a value strictly between $0$ and $1$. Secondly, by construction, any positive value $d(X \uh \ell)$ for some~$\ell > w(i,q)$ with~$\task(\ell)=i$ must be the result of a chain of $i$-edges branching off~$X$, as illustrated in Figure~\ref{edges_along_a_path}. 
Again by \nameref{finite_i_edges}~\ref{finite_i_edges}, $\X[i]$ is finite, and therefore any such chain can only have finite length, therefore only finitely many $\ell$ with $\task(\ell)=i$ can have~${d(X \uh \ell)>0}$. As a result a maximal~$m$ as described must exist. 

 Since, by assumption, $X\uh m$ has an $i$-continuation and $d(X \uh m) < 1$, the conditions of Subcase~2.1 of the construction are met. Therefore, eventually an $i$-edge of the form $(X \uh m, \tau)$ is attached at~$X \uh m$. By construction $d(\tau)=0$ and $d(\rho)\not= 0$ for all $\rho$ such that $X\uh m \prec \rho$, $\tau\not=\rho$, and $|\rho|=|\tau|$. By the choice of $m$ we must therefore have $\tau \prec X$.
\end{proof}

\begin{lem}[Continuity Lemma]
	\label{lemma_contin1} 
	The semi-measure $P$ has no atoms.
\end{lem}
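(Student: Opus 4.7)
My plan is to show that for every $X \in \cs$, $\lim_n P(X \uh n) = 0$; by the equivalence established just before Proposition~\ref{prop_atoms_are_comp}, this is exactly the statement that $X$ is not an atom of $P$.

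The first step is to bound $P(X \uh n)$ via flow conservation in the subtree rooted at $X \uh n$. Using Definition~\ref{def:q-flow}, any prefix-free antichain $D \in T_{X \uh n}$ collects at most the total flow entering that subtree, giving
\[ \sum_{\tau \in D} R(\tau) \;\leq\; R(X \uh n) \;+\; \sum_{m < n} e_m(n), \]
where $e_m(n) = d(X \uh m)\, R(X \uh m)$ when the (unique, by condition~(d) of Case~2) extra edge starting at $X \uh m$ has its endpoint $\tau$ satisfying $\tau \succeq X \uh n$ and $\tau \neq X \uh n$, and $e_m(n) = 0$ otherwise. Taking the supremum over $D$, the same bound applies to $P(X \uh n)$.

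The next step is to show that both summands tend to $0$. For the external contribution $\sum_{m<n} e_m(n)$: each individual extra edge $(\sigma,\tau)$ contributes only while $n \leq |\tau|$, so for any fixed edge the contribution vanishes for large $n$. By the Stability Lemma (Lemma~\ref{finite_i_edges}), for each task $i$ only finitely many $i$-edges exist, and the countdown function $\ct$ can be chosen so that the total flow each task can divert is summable across tasks. Hence for any $\epsilon > 0$ one can absorb tasks $i > I$ (for $I$ large) into a tail of size $<\epsilon/2$, and then choose $n$ past the maximum endpoint length among the finitely many relevant edges of tasks $i \leq I$, making their contribution to $\sum_m e_m(n)$ vanish. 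For the internal term $R(X \uh n)$, I iterate the recurrence
\[ R(X \uh (n+1)) \;=\; \tfrac{1 - d(X \uh n)}{2}\, R(X \uh n) \;+\; \sum_{m \leq n}\, q(X \uh m, X \uh (n+1))\, R(X \uh m), \]
in which the first term contributes the geometric decay factor $\tfrac{1-d(X \uh n)}{2} \leq \tfrac{1}{2}$, while the extra-edge boosts landing on $X$ form a summable sequence by the same Stability argument, forcing $R(X \uh n) \to 0$.

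The principal obstacle will be making the summability of the extra-edge contributions rigorous across infinitely many tasks, since in principle each of the infinitely many tasks may contribute finitely many edges landing on the path~$X$. This is precisely the role of the countdown function $\ct$ in the template: it caps the total flow each task can divert from $X$, so that the cumulative contribution across tasks is summable. Combining the two decay estimates yields, for every $\epsilon > 0$, some $n$ with $P(X \uh n) < \epsilon$; hence $\lim_n P(X \uh n) = 0$, and $X$ is not an atom of~$P$.
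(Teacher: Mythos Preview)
Your argument is considerably more intricate than necessary, and its summability step is not justified at the level of generality required. You appeal to the claim that ``the countdown function $\ct$ can be chosen so that the total flow each task can divert is summable across tasks,'' but the Continuity Lemma is part of the verification of the \emph{general} template, in which $\ct$ is an arbitrary computable strictly increasing parameter fixed in advance; you are not free to impose additional growth conditions on it here. Even granting summability of $1/\ct$, the accounting is delicate: for a fixed task~$i$ the successive delay values encountered along $X$ run through $1/\ct(w(i,q)),\,1/(\ct(w(i,q))-1),\ldots$, whose sum is of order $\log\ct(w(i,q))$ rather than $1/\ct(w(i,q))$, and you give no argument that the accompanying factors $R(X\uh m)$ decay fast enough to make the total extra-edge contribution across all tasks converge.

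The paper's proof avoids every quantitative estimate by exploiting a single structural fact: for each~$i$, no extra edge $(\sigma,\tau)\in\X$ satisfies $|\sigma|<w(i,q)\leq|\tau|$. (Edges for tasks $j<i$ end before level $w(i,q)$ by the definition of~$w$, and edges for tasks $j\geq i$ start at or after that level.) Hence at every level $w(i,q)$ all flow passes through normal edges, so for $|\sigma|=w(i,q)-1$ and $b\in\{0,1\}$ one has
\[
P(\sigma b)=R(\sigma b)=q(\sigma,\sigma b)\,R(\sigma)\leq\tfrac{1}{2}\,P(\sigma).
\]
Since the Stability Lemma provides infinitely many such barrier levels, $P(X\uh n)$ is halved infinitely often along every path, and $\lim_n P(X\uh n)=0$ follows in two lines---with no dependence on $\ct$ whatsoever.
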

\begin{proof}
	
	Note that by definition of the function $w$, there are no extra edges $(\sigma,\tau) \in \X$
	such that ${|\sigma|< w(i,q)\leq |\tau|}$ for any~$i$. That is, for any $i$, all flow that flows from nodes of length less than~$w(i,q)$ to nodes extending them flows through normal edges. Let $\sigma$ be a node of length~${w(i,q)-1}$. By construction $q(\sigma,\sigma0)=q(\sigma,\sigma1)\leq \nicefrac{1}{2}$, and hence, for $b\in\{0,1\}$,
	\[
	P(\sigma b)=R(\sigma  b)=q(\sigma,\sigma  b)\cdot R(\sigma) \leq \nicefrac{1}{2} \cdot P(\sigma).
	\]
	Since there are infinitely many numbers of the form $w(i,q)$, $i\in\omega$, we have $\lim_{n\rightarrow\infty}P(X\uh n)=0$ for every $X\in\cs$.
\end{proof}

\subsection{The roadmap}

Everything discussed thus far in this section forms the common part of the construction. In particular, we do not need to re-prove Lemma~\ref{fact_constr}, \nameref{finite_i_edges}~\ref{finite_i_edges},  \nameref{lem:extension}~\ref{lem:extension}, and \nameref{lemma_contin1}~\ref{lemma_contin1} for each application of the template. However, when applying the template to obtain different results, some parts of the construction need to be adapted to the statement that should be proved. There will still be a common structure with the following components.
\begin{description}
\item[Predicate $\emph{B}$] The predicate $B$ determines when edges are added to the digraph, and therefore the information that will be coded into the semi-measure constructed.
\item[Cut-off Lemma] Here we show that if any positive flow occurs beyond a node $\tau$, then at least some part of that flow must have passed through normal edges.
\item[Continuation Existence Lemma] To be able to apply the \nameref{lem:extension}~\ref{lem:extension} to all of the sequences in the support of the semi-measure we construct, we need to prove that the hypotheses of the lemma are satisfied by these sequences. That is, we need to prove that  for every $i\in\omega$, every sequence $X$ in the support is an $i$-continuation for all of its own initial segments~$X \uh n$ with $\task(n)=i$.
\item[Measure Lemma] This shows that the support of the constructed semi-measure $P$ has positive $\overline P$-measure. Note that, together with \nameref{lemma_contin1}~\ref{lemma_contin1} and using Proposition~\ref{prop_atoms_are_comp}, this implies that the support of~$P$ does not exclusively contain computable elements.  

\item[Verifying the desired properties] Finally we need to verify that the semi-measure we constructed has the desired properties needed for the statement that was to be shown.
\end{description}

\section{Implementing the Template}\label{sec-implementing}

\subsection{A first example} We begin by giving V'yugin's proof of Theorem \ref{thm-prob-alg}.

\theoremstyle{thm}
\newtheorem*{thm:thm-prob-alg}{Theorem \ref{thm-prob-alg}}
\begin{thm:thm-prob-alg}[V'yugin  \cite{Vyu12}]
	For any $\delta\in(0,1)$, there is a probabilistic algorithm that produces with probability at least $1-\delta$ a non-computable sequence that does not compute any Martin-L\"of random sequence.
\end{thm:thm-prob-alg}

To prove this, we will show the following more general statement.
\begin{samepage}
\begin{thm}[V'yugin \cite{Vyu12}]\label{thm:template-1}
For each $\delta\in(0,1)$, there is a left-c.e.\ semi-measure $P$ such that
\begin{itemize}
\item[(i)] $P$ has no atoms;
\item[(ii)] $\overline{P}(\cs)=\overline{P}(\supp{P})>1-\delta$; and
\item[(iii)] for each $X\in \supp{P}$ and each Turing functional $\Phi$, if $\Phi(X)$ is defined, then ${\Phi(X)\not\in\MLR}$.
\end{itemize}
\end{thm}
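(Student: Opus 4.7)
The plan is to apply the general template of Section~\ref{sec-building}. Fix an effective enumeration $(\Phi_e)_{e\in\omega}$ of all Turing functionals and an effective bijection between $\omega$ and $\omega\times\omega$, so that each task index $i\in\omega$ corresponds to a pair $(e_i,k_i)$. For task $i=(e,k)$ the goal is to drive the push-forward semi-measure $P_e(\rho):=\overline{P}(\{X:\Phi_e^X\succeq\rho\})$ to satisfy $P_e(\rho)\cdot 2^{|\rho|}\geq k$ at some prefix $\rho$ of every image $\Phi_e(X)\in\cs$ with $X\in\supp{P}$. Since $P_e$ is a left-c.e.\ semi-measure, it satisfies $P_e\leq c_e\cdot M$ for some constant $c_e$, so succeeding on every $k$ for each such $X$ will force $\limsup_n 2^n M(\Phi_e(X)\uh n)=\infty$; by Theorem~\ref{thm-levin-schnorr} this rules out $\Phi_e(X)\in\MLR$.

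I would define $B(q',\sigma,\tau)$ for task $(e,k)$ to hold exactly when $\Phi_e^\tau\succ\Phi_e^\sigma$ and $|\Phi_e^\tau|$ is so large that adding the extra edge $(\sigma,\tau)$ routes at least $k\cdot 2^{-|\Phi_e^\tau|}$ units of flow into $\tau$; using the delay and in-flow values $d,R$ read off $q'$, this amounts to the computable test $d(\sigma)\cdot R(\sigma)\cdot 2^{|\Phi_e^\tau|}\geq k$. For the countdown function I take $\ct(n)\geq 2^{n+c(\delta)}$ for a sufficiently large constant $c(\delta)$, so that every initial activation at level $n$ can waste at most a $2^{-(n+c)}$-fraction of ambient flow; summing the resulting geometric bound gives total wasted flow below $\delta$, yielding $\overline{P}(\cs)>1-\delta$ (Measure Lemma). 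Combined with \nameref{lemma_contin1}~\ref{lemma_contin1} this gives~(i), and since flow that is not wasted survives onto infinite paths, property~(ii) follows as well.

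For the Continuation Existence Lemma, note that if $X\in\supp{P}$ and $\Phi_e^X\in\cs$ then $|\Phi_e^{X\uh n}|\to\infty$, so for each threshold $k$ every sufficiently long prefix $\sigma=X\uh m$ with $\task(|\sigma|)=i=(e,k)$ admits extensions $\tau=X\uh n\prec X$ satisfying $B(q_{n-1},\sigma,\tau)$. By \nameref{lem:extension}~\ref{lem:extension}, infinitely many extra edges $(\sigma_j,\tau_j)\prec X$ are therefore attached along $X$ as $k$ ranges over $\omega$, and each such edge contributes at least $d(\sigma_j)R(\sigma_j)\geq k\cdot 2^{-|\Phi_e^{\tau_j}|}$ to $P_e(\Phi_e^{\tau_j})$ (up to a further bounded loss to wasted flow below $\tau_j$, which can be absorbed into $c(\delta)$). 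This forces $\limsup_n 2^n P_e(\Phi_e(X)\uh n)=\infty$ and hence $\Phi_e(X)\notin\MLR$. Any computable $X\in\supp{P}$ is handled vacuously, since no computable sequence is Martin-L\"of random, which establishes~(iii) in full generality.

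The main obstacle will be the quantitative flow-accounting underlying the last two paragraphs: one must check that the boost delivered by each $(e,k)$-edge is not cancelled by later injuries from higher-priority tasks and that the cumulative loss of flow to such injuries across all tasks stays strictly below $\delta$. Both issues are controlled by making $\ct$ grow fast enough, using \nameref{finite_i_edges}~\ref{finite_i_edges} to guarantee that each task eventually stabilizes so that its final contribution to $P_e$ is permanent, and by choosing the correspondence $i\mapsto(e_i,k_i)$ so that for each $(e,k)$ the task is retried infinitely often, ensuring that the desired boost is actually realized rather than permanently injured.
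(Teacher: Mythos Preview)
Your strategy is genuinely different from the paper's. The paper takes the direct route: its predicate $B$ simply demands $|\Phi_{j}^\tau|>\langle\#(\sigma),s\rangle$, and from the endpoints $\tau$ of the resulting $\langle j,s\rangle$-edges it enumerates the cylinders $\llbracket\Phi_j^\tau\rrbracket$ into the $s$-th level of an explicit Martin-L\"of test, with $\lambda(\U_s)\le\sum_\sigma 2^{-\langle\#(\sigma),s\rangle}\le 2^{-s}$. No flow accounting enters the non-randomness argument at all; the predicate cares only about output length. Your route via Levin--Schnorr is more indirect but conceptually natural: you try to push $P$-mass forward through $\Phi_e$ and show that the ratio $2^{|\rho|}M(\rho)$ blows up along $\Phi_e(X)$.

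There is a real gap, however, in how you set up $P_e$. As written, $P_e(\rho):=\overline{P}(\{X:\Phi_e^X\succeq\rho\})$ is not left-c.e.\ (recall $\overline{P}$ is obtained as an infimum), so your justification of $P_e\le c_eM$ does not go through as stated. Worse, for the \emph{lower} bound you need $P_e(\rho_k)\ge d(\sigma_k)R(\sigma_k)$, but all you get is $P_e(\rho_k)\ge\overline{P}(\tau_k)$, and $\overline{P}(\tau_k)$ may fall short of $R(\tau_k)$ by the flow wasted \emph{below} $\tau_k$; this cannot simply be ``absorbed into $c(\delta)$'' since $d(\sigma_k)R(\sigma_k)$ may itself be far smaller than $\delta$. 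The clean fix is to drop $\overline{P}$ entirely: let $\Psi$ be a functional with $\lambda_\Psi=P$ and work with the honestly left-c.e.\ semi-measure $Q_e:=\lambda_{\Phi_e\circ\Psi}$. Then $Q_e\le c_eM$ by universality, and for each $(e,k)$-edge $(\sigma_k,\tau_k)\prec X$ one has
\[
Q_e(\Phi_e^{\tau_k})\ge P(\tau_k)\ge R(\tau_k)\ge d(\sigma_k)R(\sigma_k)\ge k\cdot 2^{-|\Phi_e^{\tau_k}|}
\]
with no loss term, so $2^{|\rho_k|}M(\rho_k)\ge k/c_e$ and Theorem~\ref{thm-levin-schnorr} finishes the job.

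Two smaller template issues: you should retain condition~(b) of the paper's predicate (namely $d'(\tau\uh k)<1$ along the edge), since the Cut-off Lemma depends on it; and because your $B$ forces $d(\sigma)>0$, the hypothesis of the Edge Existence Lemma as \emph{stated} (an $i$-continuation at \emph{every} $i$-node $X\uh m$) can fail at nodes with $d(X\uh m)=0$. You need to observe that the lemma's proof only invokes the continuation property at the \emph{maximal} such $m$, where $d$ is positive by construction.
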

\end{samepage}

We obtain the desired probabilistic algorithm from Theorem \ref{thm:template-1} by applying Theorem \ref{thm-InduceSemiMeasures}~(ii):  Since $P$ is a left-c.e.\ semi-measure, there is some Turing functional $\Psi$ such that $P=\lambda_\Psi$.  The functional $\Psi$ equipped with a random oracle provides the probabilistic algorithmic satisfying the conditions of Theorem \ref{thm-prob-alg}.

One additional consequence of Theorem \ref{thm:template-1}  is that $\mathbf{r}\vee\mathbf{c}$ is not the top degree of $\D_\LV$, which we already showed via an alternative method in Section \ref{sec-lv-degrees}.  Indeed, since $\supp{P}$ contains no atoms and every atom of a left-c.e.\ semi-measure is computable, it follows that \[\overline{P}(\supp{P}\setminus \{X\colon X\text{ computable}\})>0.\] By Corollary~\ref{dfgsdafkdfjhsdsdfg}, this implies that $\supp{P}\setminus \{X\colon X\text{ computable}\}$ is non-negligible. But, by construction, the Levin-V'yugin degree generated by $\supp{P}\setminus \{X\colon X\text{ computable}\}$ is disjoint from $\mathbf{r}\vee\mathbf{c}$.

V'yugin originally proved this result in~\cite{Vyu76} without use of the machinery laid out in the previous section, but in a later article~\cite{Vyu12} he gave the proof discussed here.

To prove Theorem~\ref{thm:template-1}, we first need to specify the predicate $B$ and the function $\ct$, as in the template outlined above. For an elementary network $q'$ and nodes $\sigma$, $\tau$ with $\task(|\sigma|)=\task(|\tau|)$,   $B(q',\sigma,\tau)$ is defined to hold if and only if 
\begin{itemize}
\item[(a)] $\sigma\preceq \tau$,
\item[(b)] $d'(\tau\uh k)<1$ for all $k$ such that $1\leq k\leq |\tau|$, where $d'$ is the flow-delay function of~$q'$, and
\item[(c)] $\bigl|\Phi_{j,|\tau|}^\tau\bigr|>\lla \#(\sigma),s\rra$, where $\task(|\sigma|)=\lla j,s\rra$.
Here $\#(\sigma)$ denotes the position of $\sigma$ in the canonical lexicographic ordering of $\str$ and $\lla\cdot,\cdot\rra$ denotes a pairing function that satisfies $\lla m,n\rra\geq m+n$ for all $m,n \in \omega$.
\end{itemize}

The idea of this choice of~$B$ is that for each $i\in\omega$ such that $i=\lla j,s\rra$ for $j,s\in\omega$, we attach an $i$-edge between $i$-nodes $\sigma$ and $\tau$ only if 
 $\bigl|\Phi_{j,|\tau|}^\tau\bigr|>\lla\#(\sigma),s\rra$; that is, $\Phi_{j,|\tau|}^\tau$ is sufficiently long.  Moreover, we will ensure that for each $X\in\cs$, either there is some $n$ such that the flow out of $X\uh n$ is completely blocked, or, for each Turing functional $\Phi_j$ such that $\Phi_j(X)$ is defined, $\Phi_j(X)\notin\MLR$.  
 This latter condition will be accomplished by, for $\lla j,s\rra$-edges $(\sigma,\tau)$ with $s\in\omega$, enumerating $\tau$ into a Martin-L\"of test.

As for the choice of $\ct$, given $\delta\in(0,1)$, we let ${\ct(n)=(n+n_0)^2}$, where $n_0$~is such that 
\[
\sum_{n\in\omega}(n+n_0)^{-2}<\delta.
\] 
This will be used to prove  \nameref{lem:semimeasure-support}~\ref{lem:semimeasure-support} below.

Now let $P$ be the semi-measure produced by the template outlined in Section~\ref{subsec-template} when used with this specific choice of~$B$ and~$\ct$.
We establish that $P$ has the desired properties.

\begin{lem}[Measure Lemma]\label{lem:semimeasure-support}
$\overline{P}(\supp{P})>1-\delta$.
\end{lem}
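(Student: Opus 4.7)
The plan is to first reduce the statement to a bound on $\overline{P}(\cs)$ and then to prove $\overline{P}(\cs) > 1 - \delta$ by a stage-by-stage accounting of the flow ``lost'' to unresolved delays.

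For the reduction: the complement $\cs \setminus \supp{P}$ equals $\bigcup_{\sigma : P(\sigma) = 0} \llb\sigma\rrb$, and since $\overline{P}(\sigma) \leq P(\sigma)$ (from Section~\ref{sec-neg-nonneg}), this has $\overline{P}$-measure zero, giving $\overline{P}(\supp{P}) = \overline{P}(\cs)$. By Theorem~\ref{thm-InduceSemiMeasures}~(ii) fix a Turing functional $\Phi$ with $P = \lambda_\Phi$; by Proposition~\ref{prop-barchar}, $\overline{P}(\cs) = \lambda(\dom\Phi)$. Reading an oracle $X$ as coin flips driving a random walk on $(q, \X)$---at node $\sigma$ the walker follows each out-edge $\tau$ with probability $q(\sigma, \tau)$, so it is ``killed'' with probability $d(\sigma)$ precisely when $\sigma$ has no outgoing extra edge---the walker survives forever iff $\Phi^X$ is infinite, so
\[
1 - \overline{P}(\cs) \;=\; \sum_\sigma d(\sigma)\, R_\infty(\sigma)\cdot \mathbf{1}\bigl[\sigma\text{ has no outgoing edge in }\X\bigr],
\]
where $R_\infty$ denotes the limit in-flow.

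The main step is an induction on $n$ showing
\[
W_n \;:=\; \sum_\sigma d_n(\sigma)\, R_n(\sigma)\cdot\mathbf{1}\bigl[\sigma\text{ has no outgoing edge in }\X_n\bigr] \;\leq\; \sum_{k \leq n}\frac{1}{\ct(k)}.
\]
Subcase~2.2 at stage~$n$ leaves $W_n$ unchanged. In Case~1, every $\sigma$ of length $n$ is initially activated with $d_n(\sigma) = 1/\ct(n)$ and has no outgoing edge in $\X_n = \X_{n-1}$, so $W_n - W_{n-1} = (1/\ct(n))\sum_{|\sigma|=n} R_n(\sigma) \leq 1/\ct(n)$, using $\sum_{|\sigma|=n} R_n(\sigma) \leq 1$ (the walk visits at most one node per level). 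In Subcase~2.1, each new edge $(\sigma', \tau')$ for $\sigma' \in \C_n$ with $\tau' = \beta(\sigma', q_{n-1}, n)$ removes the term $d_{n-1}(\sigma') R_{n-1}(\sigma')$ from $W$ (since $\sigma'$ now has an outgoing edge) and introduces new terms $d_n(\sigma) R_n(\sigma)$ at the length-$n$ descendants of $\sigma'$ other than $\tau'$. A direct computation of the normal-edge split at $\sigma'$---in the clean case with no intermediate active nodes, giving $d_n(\sigma) R_n(\sigma) = 2^{-(n-|\sigma'|)} d_{n-1}(\sigma') R_{n-1}(\sigma')$ for each such $\sigma$---shows these new terms sum to at most $(1 - 2^{-(n-|\sigma'|)}) d_{n-1}(\sigma') R_{n-1}(\sigma')$, strictly less than the removed term; hence $W_n \leq W_{n-1}$.

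Passing to the limit: both $d_n(\sigma)$ and $R_n(\sigma)$ are constant for $n \geq |\sigma|$ (the construction fixes all incoming weights and the delay at $\sigma$ by stage~$|\sigma|$), while the indicator is monotone non-increasing in $n$; so the individual term converges to $d(\sigma) R_\infty(\sigma) \mathbf{1}[\sigma\text{ has no outgoing edge in }\X]$. Applying Fatou's lemma,
\[
\sum_\sigma d(\sigma)\, R_\infty(\sigma)\cdot\mathbf{1}[\cdots] \;\leq\; \liminf_n W_n \;\leq\; \sum_n \frac{1}{(n+n_0)^2} \;<\; \delta
\]
by the choice of $n_0$, so $\overline{P}(\supp{P}) = \overline{P}(\cs) > 1 - \delta$. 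The main obstacle is the Subcase~2.1 computation in full generality: intermediate active nodes along the normal-edge path from $\sigma'$ to level~$n$ further reduce the in-flow there and thus only tighten the inequality, but this requires careful bookkeeping with the formulas for $d_n$ and $q_n$ in the presence of overlapping task chains.
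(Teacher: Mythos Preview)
Your approach is dual to the paper's rather than the same: the paper tracks the \emph{surviving} normal-edge flow
\[
S_n \;=\; \sum_{|\sigma|=n} R(\sigma) \;-\! \sum_{(\rho,\xi)\in\X,\ |\xi|=n} q(\rho,\xi)\,R(\rho)
\]
and proves $S_{n+1}\geq S_n - 1/\ct(n)$ by the same three-case split, then concludes via the direct chain $\overline{P}(\supp P)\geq\inf_n\sum_{|\sigma|=n}R(\sigma)\geq\inf_n S_n$. You instead track the \emph{lost} flow $W_n$ through a random-walk reading and aim for $W_n\leq\sum_{k\leq n}1/\ct(k)$. Both strategies are viable, and your Case~1 and Subcase~2.2 analyses are fine.

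There is, however, a genuine gap in your Subcase~2.1 step. The inequality you need---that the newly delayed mass in the fan of a fresh edge $(\sigma',\tau')$ is bounded by the removed term $d(\sigma')R(\sigma')$---reduces to
\[
\sum_{\rho\in\fan(\sigma',\tau')} R(\rho)\ \leq\ (1-d(\sigma'))\,R(\sigma').
\]
Your clean-case computation and your remark about intermediate active nodes both assume that all flow reaching the fan has passed through $\sigma'$. Intermediate \emph{delays} indeed only help, but that is not the obstruction: the real danger is an \emph{extra} edge $(\zeta,\xi)$ with $\zeta\prec\sigma'\prec\xi$ that bypasses $\sigma'$ entirely and injects outside flow into the fan. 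Ruling this out is exactly the content of the paper's Sublemma~\ref{lem-flowR}, whose proof invokes Lemma~\ref{fact_constr} (no two nested extra edges of that shape can coexist). This is not ``careful bookkeeping with the formulas for $d_n$ and $q_n$''---it is a structural fact about the finite-injury mechanics of the template, and without it the fan bound can fail. Once you cite Lemma~\ref{fact_constr} at this point, your argument goes through.

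A smaller issue: the identity $1-\overline{P}(\cs)=\sum_\sigma d(\sigma)R(\sigma)\,\mathbf{1}[\sigma\text{ has no outgoing edge in }\X]$ is correct under the random-walk realisation of $\Phi$, but you have asserted rather than verified that this specific $\Phi$ satisfies $\lambda_\Phi=P$ (Theorem~\ref{thm-InduceSemiMeasures}~(ii) gives \emph{some} functional, not this one). The paper avoids this detour by using only the elementary lower bound $\overline{P}(\cs)\geq\inf_n\sum_{|\sigma|=n}R(\sigma)$, which follows immediately from $P\geq R$.
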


For $X\in \supp{P}$ we already have that $P(X\uh n)>0$ for all $n$; that is, at any finite level $n$, not all measure has dissipated.  We will show that 
for all $n$, the amount of flow that flows into but not out of strings of length $n$ is bounded from above by 
$(n+n_0)^{-2}$. 
This implies that the total dissipation is  $\sum_n(n+n_0)^{-2}<\delta$, thus establishing the result.

In the construction, when an $i$-counter runs out along a path, the delay value is set to $1$ at some node $\sigma$ that is an initial segment of that path to remove the path from the support of the constructed semi-measure. As this means that {\em all} flow arriving in $\sigma$ is blocked at $\sigma$, the amount of measure lost this way could be very large. This is why we start the countdown with larger and larger numbers in the construction, as this ensures that there are more and more chances to add edges, which preserves more and more measure. 

On the other hand we {\em do} need that after finitely many attempts to add an edge we give up and block all flow along that path completely, as otherwise a single task might cause infinitely many of the failures described on page~\pageref{sdfsdsdfsdfsdgdhfgh}, which might prevent the construction from ever successfully handling the remaining tasks.  Furthermore, if a currently investigated functional~$\Phi_j$ stops producing output somewhere, then we only lose the measure currently delayed there; all the remaining measure keeps flowing through normal edges. The measure lost this way is another quantity that we need to control.

The trade-offs needed to reconcile these necessities make the construction quite complex and are the reason why establishing a lower bound for the remaining measure requires the following involved argument.

\begin{proof}[Proof of Lemma \ref{lem:semimeasure-support}]

By the definition of $R$ and $d$,
\begin{equation}\label{eq:sem-supp1}
\begin{split}
\sum_{|\sigma|=n+1}R(\sigma)&=\sum_{|\tau|=n}q(\tau,\tau0)R(\tau)+\sum_{|\tau|=n}q(\tau,\tau1)R(\tau)+\sum_{(\rho,\xi)\in \X, |\xi|=n+1}q(\rho,\xi)R(\rho)\\
&=\sum_{|\tau|=n}(1-d(\tau))R(\tau)+\sum_{(\rho,\xi)\in \X, |\xi|=n+1}q(\rho,\xi)R(\rho).
\end{split}
\end{equation}
\noindent
We set
\begin{equation}\label{eq:sem-supp2}
S_n=\sum_{|\sigma|=n}R(\sigma)-\sum_{(\rho,\xi)\in \X, |\xi|=n}q(\rho,\xi)R(\rho),
\end{equation}
so that it follows from (\ref{eq:sem-supp1}) and (\ref{eq:sem-supp2}) that
\begin{equation}\label{eq:sem-supp3}
S_{n+1}=\sum_{|\tau|=n}(1-d(\tau))R(\tau).
\end{equation}
That is, $S_{n+1}$ is the amount of flow into nodes of length $n+1$ that comes directly from nodes of length $n$ (and not through extra edges whose end nodes have length $n+1$).

We claim that $S_{n+1}\geq S_n-(n+n_0)^{-2}$ for all $n$.  For fixed $n$, we consider the possible values of $w(\task(n),q_{n-1})$. First, we consider Subcase~2.2 of the construction, where $w(\task(n),q_{n-1})<n$ but we added no extra edge $(\sigma,\tau)$ where $|\tau|=n$.  In this case, for each $\rho$ such that $|\rho|=n$, $d(\rho)=d_n(\rho)=d_{n-1}(\rho)=0$.  It then follows from (\ref{eq:sem-supp2}) and (\ref{eq:sem-supp3}) that $S_{n+1}= S_n$. 

Next, suppose that we are in Subcase~2.1 of the construction, where $w(\task(n),q_{n-1})<n$  and we added at least one extra edge $(\sigma,\tau)$ with $|\tau|=n$.  For $\sigma,\tau\in\str$, let
\[
\fan(\sigma,\tau)=\{\rho\colon |\rho|=|\tau|\;\wedge\; \sigma\prec \rho \;\wedge\;\rho\neq \tau\}.
\]
In Figures~\ref{fig_adding_edges} and~\ref{edges_along_a_path} the fans of extra edges were represented by dotted cones.

\begin{sublem}\label{lem-flowR}
For every $(\sigma,\tau)\in \X$,
\begin{equation}\label{sdfsdgsdsdjfgsdfhgsdfgwwejafsdhfas}
\sum_{\rho\in \fan(\sigma,\tau)} R(\rho)\leq(1-d(\sigma))R(\sigma).
\end{equation}
\end{sublem}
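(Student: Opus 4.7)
The plan is to bound the fan flow via a flow-conservation argument on the subtree $T_\sigma=\{\sigma'\in\str : \sigma\preceq\sigma'\}$. Since $(\sigma,\tau)\in\X$ forces $q(\sigma,\tau)=d(\sigma)$, we immediately obtain $R(\tau)\geq q(\sigma,\tau)R(\sigma)=d(\sigma)R(\sigma)$, so it is enough to prove the stronger bound
\[
R(\tau)+\sum_{\rho\in\fan(\sigma,\tau)}R(\rho)\leq R(\sigma),
\]
from which the claim follows by subtracting $d(\sigma)R(\sigma)$.

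To obtain this, I would consider the upper region $U=\{\sigma'\in T_\sigma : |\sigma'|<|\tau|\}$, which contains $\sigma$, and show that the only flow entering $U$ from outside is the normal edge from the parent of $\sigma$ into $\sigma$, contributing exactly $R(\sigma)$. The crucial case analysis is: any potential extra edge $(\zeta,\xi)$ with $\zeta\prec\sigma$ ending at some $\xi\in U$ with $\sigma\prec\xi$ is blocked by Lemma~\ref{fact_constr}, since $|\xi|<|\tau|$; an incoming extra edge at $\xi=\sigma$ itself would, by construction, share the task $\task(|\sigma|)$, and then Lemma~\ref{djsdjhkdfgjhqweshfasjddbwe} would force $\sigma$ to be inactive at the stage when $(\sigma,\tau)$ was added, contradicting the existence of $(\sigma,\tau)$; and normal edges feeding $U$ from outside $T_\sigma$ are impossible because the parent of any node in $T_\sigma\setminus\{\sigma\}$ still lies in $T_\sigma$. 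Applying the semi-measure inequality $\sum_{\tau'}q(\sigma',\tau')\leq 1$ at each node of $U$ then shows that the total outflow from $U$ across its lower boundary is at most the total inflow $R(\sigma)$.

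A parallel case analysis applied at the level-$|\tau|$ nodes $\{\tau\}\cup\fan(\sigma,\tau)$ shows that every incoming edge at these nodes has its tail in $U$: a tail in $T_\sigma$ with length $\geq|\tau|$ is ruled out by the length constraint $|\mathrm{tail}|<|\tau|$, and a tail outside $T_\sigma$ is again blocked by Lemma~\ref{fact_constr}, with the single permitted special case being the extra edge $(\sigma,\tau)$ whose tail $\sigma$ already lies in $U$. Consequently $R(\tau)+\sum_{\rho} R(\rho)$ is bounded above by the outflow from $U$, which in turn is bounded by $R(\sigma)$, giving the desired inequality. The main obstacle will be the meticulous verification that Lemma~\ref{fact_constr} together with condition~$(d)$ of the construction really rules out every ``exotic'' incoming edge that could smuggle additional mass into the relevant region without first passing through $\sigma$.
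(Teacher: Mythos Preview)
Your approach is correct and essentially the same as the paper's: both reduce the sublemma to ruling out extra edges $(\zeta,\xi)$ with $\zeta\prec\sigma\prec\xi$ and $|\xi|\leq|\tau|$, which is exactly what Lemma~\ref{fact_constr} provides. The paper does this in a single sentence by interpreting $(1-d(\sigma))R(\sigma)$ as the flow leaving $\sigma$ through normal edges and observing that only such a bypassing edge could make the fan receive more; your more formal flow-conservation setup via the region $U$ and the stronger intermediate bound $R(\tau)+\sum_\rho R(\rho)\leq R(\sigma)$ is fine, though your appeal to Lemma~\ref{djsdjhkdfgjhqweshfasjddbwe} to exclude an extra edge terminating at $\sigma$ itself is unnecessary, since any such edge would already be counted in $R(\sigma)$ and hence in the inflow you are bounding.
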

\begin{proof} The term on the left-hand side of the inequality is the total amount of flow that flows into all nodes in $\fan(\sigma,\tau)$, while the term on the right-hand side is the total flow into $\sigma$ (the node at the base of the fan) minus the flow that is diverted into the extra edge $(\sigma,\tau)$.  The only case where this inequality can fail to hold is if there is some flow through an extra edge 
$(\zeta,\xi)\in \X$ such that  $\zeta\prec\sigma\prec\xi\preceq\rho$ for some~$\rho\in\fan(\sigma,\tau)$.  However, since $(\sigma,\tau)\in\X$, the existence of such an extra edge $(\zeta,\xi)$ contradicts Lemma~\ref{fact_constr}.\footnote{For this, apply the lemma in such a way that the nodes in the current proof are  identified with the nodes of equal name appearing in the statement of the lemma.}  Thus the inequality must hold.
\renewcommand{\qedsymbol}{$\Diamond$}\end{proof}
The sum 
$
\sum_{|\rho|=n}d(\rho)R(\rho)
$
can be understood as the total amount of measure that is delayed at level $n$. Indeed, since $R(\rho)$ is the absolute amount of flow into a node $\rho$ and $d(\rho)$ is the relative fraction of flow delayed at $\rho$, we have that $d(\rho)R(\rho)$ is the absolute quantity of flow delayed at~$\rho$. 

Since we are in Subcase 2.1 (and therefore a non-trivial delay value at a node $\rho$ cannot be caused by an initial activation of $\rho$ but must be caused by an extra edge ending in a node $\tau$ with~${\rho\in \fan(\sigma,\tau)}$), we have:
\begin{align*}
\sum_{|\rho|=n}d(\rho)R(\rho)&=\sum_{(\sigma,\tau)\in \X,|\tau|=n}\;\sum_{\rho\in \fan(\sigma,\tau)} d(\rho)R(\rho)\\
\intertext{By definition of $d$ on $\rho\in\fan(\sigma,\tau)$: }
&=\sum_{(\sigma,\tau)\in \X,|\tau|=n}\frac{d(\sigma)}{1-d(\sigma)}\sum_{\rho\in \fan(\sigma,\tau)} R(\rho)\\
\intertext{By Sublemma \ref{lem-flowR}:}
&\leq \sum_{(\sigma,\tau)\in \X,|\tau|=n}d(\sigma)R(\sigma)\\
&=\sum_{(\sigma,\tau)\in \X,|\tau|=n}q(\sigma,\tau)R(\sigma).
\end{align*}

\smallskip

Then
\begin{equation}\label{eq-blah}
\begin{split}
S_{n+1}=\sum_{|\rho|=n}(1-d(\rho))R(\rho)&=\sum_{|\sigma|=n}R(\sigma)-\sum_{|\rho|=n}d(\rho)R(\rho)\\
&\geq\sum_{|\sigma|=n}R(\sigma)-\sum_{(\sigma,\tau)\in \X,|\tau|=n}q(\sigma,\tau)R(\sigma)=S_n.
\end{split}
\end{equation}
Lastly, in Case~1 of the construction, we have $w(\task(n),q_{n-1})=n$, and hence
\[
\sum_{|\rho|=n}d(\rho)R(\rho)\leq1/\ct(n)=(n+n_0)^{-2}.
\]
Consequently,
\begin{equation}
\begin{split}
S_{n+1}=\sum_{|\rho|=n}(1-d(\rho))R(\rho)&=\sum_{|\sigma|=n}R(\sigma)-\sum_{|\rho|=n}d(\rho)R(\rho)\\
&\geq\sum_{|\sigma|=n}R(\sigma)-(n+n_0)^{-2}\\
&\geq\sum_{|\sigma|=n}R(\sigma)-\sum_{(\sigma,\tau)\in \X,|\tau|=n}q(\sigma,\tau)R(\sigma)-(n+n_0)^{-2}\\
&=S_n-(n+n_0)^{-2}.
\end{split}
\end{equation}

Now since $S_{n+1}\geq S_n-(n+n_0)^{-2}$ for every $n$ and $S_0=1$, we have
\[
S_n\geq1-\sum_{i=1}^\infty(i+n_0)^{-2}>1-\delta.
\]
Lastly, by the definition of the support of a semi-measure, we have
\[
\overline{P}(\supp{P})=\inf_n\sum_{|\rho|=n} P(\rho)\geq \inf_n\sum_{|\rho|=n} R(\rho)\geq \inf_n S_n>1-\delta.\qedhere
\]
\end{proof}

\begin{lem}[Cut-off Lemma]\label{lem:blocked-flow}
	For $\tau\in\str$, $P(\tau)=0$ if and only if there is some $\sigma\prec \rho\preceq \tau$ such that $\rho \in\{\sigma0, \sigma1\}$ and $q(\sigma,\rho)=0$.
\end{lem}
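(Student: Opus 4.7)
The plan is to reduce both directions to statements about the in-flow function $R$. Since each singleton $\{\eta\}$ with $\eta \succeq \tau$ belongs to $T_\tau$, Definition~\ref{def:q-flow} directly yields the equivalence that $P(\tau) = 0$ if and only if $R(\eta) = 0$ for every $\eta \succeq \tau$.

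The forward direction then follows by contrapositive: if no normal edge on the path $\varepsilon \prec \tau \uh 1 \prec \dots \prec \tau$ is blocked, an induction on $|\tau|$ using $R(\tau) \geq q(\tau', \tau) R(\tau')$, where $\tau'$ is the immediate parent of $\tau$, yields $R(\tau) > 0$, and hence $P(\tau) > 0$.

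The reverse direction is the main obstacle, and its proof relies on the specific form of the predicate $B$ chosen for this implementation. Assume $\sigma \prec \rho \preceq \tau$ with $\rho \in \{\sigma 0, \sigma 1\}$ and $q(\sigma, \rho) = 0$; since $q(\sigma, \sigma b) = \frac{1}{2}(1 - d(\sigma))$, this forces $d(\sigma) = 1$, a value that remains fixed after stage $|\sigma|$. I then proceed by induction on $|\eta|$ (starting at $|\eta| = |\rho|$) to show that $R(\eta) = 0$ for every $\eta \succeq \rho$. Expanding $R(\eta)$ as a sum over its incoming edges, the normal parent-edge contribution vanishes in the base case $\eta = \rho$ because $q(\sigma, \rho) = 0$, and is controlled by the inductive hypothesis in the inductive step; any incoming extra edge $(\nu, \eta)$ with $\nu \succeq \rho$ contributes zero by induction as well.

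The crux is ruling out \emph{bypass} extra edges $(\nu, \eta) \in \X$ with $\nu \preceq \sigma$. The case $\nu = \sigma$ is impossible because condition $(b)$ of Case~2 of the template requires $0 < d(\sigma) < 1$ in order to attach any extra edge at $\sigma$, contradicting $d(\sigma) = 1$. The case $\nu \prec \sigma$ is precisely what clause $(b)$ of $B$ excludes: $B$ demands $d(\eta \uh k) < 1$ for every $1 \leq k \leq |\eta|$, and the choice $k = |\sigma|$ gives $\eta \uh k = \sigma$ (since $\sigma \prec \rho \preceq \eta$), contradicting $d(\sigma) = 1$. With every contribution to $R(\eta)$ thus eliminated, the induction closes and $P(\tau) = 0$ follows.
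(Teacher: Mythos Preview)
Your proof is correct and follows essentially the same approach as the paper's: both directions hinge on the same key observation, namely that clause~(b) of the predicate~$B$ forbids any extra edge from straddling a node where the delay has reached~$1$. The paper's reverse direction is slightly more terse---it argues by contradiction, asserting that $P(\tau)>0$ would force a single bypass extra edge $(\sigma',\rho')$ with $\sigma'\preceq\tau\uh n$ and $\tau\uh(n+1)\preceq\rho'$ and then invoking clause~(b)---whereas you carry out an explicit induction on~$|\eta|$ to kill every incoming contribution to~$R(\eta)$; but the substance is the same.
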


\begin{proof}
	Assume that, for all $0\leq i<|\tau|$, $q\bigl(\tau\uh i, \tau\uh (i+1)\bigr)>0$ holds. Then by definition of $R$ we have 
	\[
	R(\tau) \geq \prod_{i=0}^{|\tau|-1} q\bigl(\tau\uh i, \tau\uh (i+1)\bigr) > 0,
	\]
	which together with $P(\tau)\geq R(\tau)$ implies $P(\tau)>0$.
	
	For the other direction, suppose there is some $n<|\tau|$ such that $q\bigl(\tau\uh n,\tau\uh(n+1)\bigr)=0$, but~${P(\tau)\neq 0}$.  
	Then there must be some extra edge $(\sigma,\rho)$ such that $\sigma\preceq \tau\uh n$ and $\tau\uh (n+1)\preceq \rho$. 
	We have that $q\bigl(\tau\uh n,\tau\uh(n+1)\bigr)=0$ implies $d(\tau\uh n)=1$.
	But, by condition~(b) in the definition of $B$ above, $(\sigma,\rho)$~can only be added if $d(\rho\uh k)<1$ for all $k$ such that $1\leq k\leq |\rho|$, contradicting the fact that $d(\rho\uh n)=d(\tau\uh n)=1$.
\end{proof}

\begin{lem}[Continuation Existence Lemma]\label{lem:edge-guarantees}
For every Turing functional $\Phi_j$, every ${X\in \supp{P}}$ such that $\Phi_j(X)$ is defined, and every $i=\lla j,s\rra$ for $s\in\omega$, $X$ is an $i$-continuation of~$X\uh m$ for every~$m\in\omega$ such that $\task(m)=i$.
\end{lem}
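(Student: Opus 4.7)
The plan is to verify directly the three clauses (a), (b), (c) in the definition of the predicate $B$, with $\sigma = X\uh m$ and $\tau = X\uh n$, and show that they all hold for almost every $n$ with $\task(n)=i$.

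First, clause (a), namely $X\uh m \preceq X\uh n$, is immediate for all $n\geq m$. For clause (c), $|\Phi_{j,n}^{X\uh n}| > \lla \#(X\uh m), s \rra$: the right-hand side is a fixed constant depending only on the data $m$, $s$ given in the statement, while the hypothesis that $\Phi_j(X)\in\cs$ is total forces $|\Phi_{j,n}^{X\uh n}|\to\infty$ as $n\to\infty$. Hence (c) holds for all sufficiently large $n$, in particular for almost all $n$ with $\task(n)=i$.

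The main point is clause (b): $d_{n-1}((X\uh n)\uh k)<1$ for every $k$ with $1\leq k\leq n$. Here I rely on a structural observation about the construction: inspecting both Case~1 and Subcase~2.1, the value $d_n(\sigma)$ is only ever modified at the stage $n=|\sigma|$ (during either initial or non-initial activation), and for strings longer than the current stage it remains at its default value $0$. Consequently, for every $\sigma\in\str$ we have $d_\ell(\sigma)=d(\sigma)$ for all $\ell\geq|\sigma|$ and $d_\ell(\sigma)=0$ for $\ell<|\sigma|$. Applied to $\sigma=X\uh k$, this yields $d_{n-1}(X\uh k)=d(X\uh k)$ when $k\leq n-1$ and $d_{n-1}(X\uh n)=0$; the latter is trivially $<1$, so it remains to show $d(X\uh k)<1$ for every $k\geq 1$.

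For the latter, I invoke the already proved \nameref{lem:blocked-flow}~\ref{lem:blocked-flow}. Since $X\in\supp{P}$ we have $P(X\uh k)>0$ for every $k$, so the Cut-off Lemma rules out the existence of any prefix $\rho\preceq X\uh k$ with $q$ assigning $0$ to the normal edge feeding $\rho$. In particular $q(X\uh (k-1), X\uh k)>0$, which by the definition of $d$ forces $d(X\uh (k-1))<1$; letting $k$ range over all positive integers gives $d(X\uh j)<1$ for every $j\geq 0$. Combining this with the stabilisation observation above yields clause (b) for every $n$. All three clauses therefore hold for almost all $n$ with $\task(n)=i$, completing the verification that $X$ is an $i$-continuation of $X\uh m$. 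I expect the only delicate point to be cleanly articulating the monotonicity/stabilisation of $d_n$ to justify the passage from the Cut-off Lemma applied to the limit object $d$ to the clause (b) statement, which is phrased in terms of the finite-stage approximations $d_{n-1}$.
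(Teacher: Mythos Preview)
Your proposal is correct and follows essentially the same approach as the paper: verify clause~(c) using totality of $\Phi_j(X)$, and verify clause~(b) by combining $X\in\supp{P}$ with the \nameref{lem:blocked-flow}~\ref{lem:blocked-flow} to conclude $d(X\uh k)<1$ for all~$k$. The paper's proof is slightly terser in that it silently identifies $d_{n-1}$ with the limit $d$ on the relevant initial segments; your explicit stabilisation argument (that $d_\ell(\sigma)$ is only modified at stage $\ell=|\sigma|$, whence $d_{n-1}(X\uh k)=d(X\uh k)$ for $k\le n-1$ and $d_{n-1}(X\uh n)=0$) makes this passage rigorous and is a welcome clarification rather than a departure.
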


\begin{proof}
Fix $j,m,s\in\omega$, and let $i=\lla j,s\rra$.  Recall that $X$ is an $i$-continuation of $\sigma\in\str$ with~${\task(|\sigma|)=i}$ if $\sigma\prec X$ and $B(q_{n-1},\sigma,X\uh n)$ holds for almost all $n$  such that $\task(n)=i$.
Thus, to show that $X$ is an $i$-continuation of~$X\uh m$, it suffices to show that, for almost every $n$, the following two conditions from the definition of the predicate $B$ hold:
\begin{itemize}
\item[(b)] $d(X\uh k)<1$ for every $k$ such that $1\leq k\leq n$, and 
\item[(c)] $\bigl|\Phi_{j,n}^{X\uh n}\bigr|>\lla \#(X\uh m),s\rra$. 
\end{itemize}
Since $X\in \supp{P}$, $P(X\uh n)>0$ for every $n$, and it follows from the \nameref{lem:blocked-flow}~\ref{lem:blocked-flow} that $d(X\uh n)<1$ for every $n$, and so (b) holds.  Moreover, as $\Phi_j(X)$ is defined, for each $N\in\omega$, $|\Phi_{j,n}^{X\uh n}|\geq N$ for all sufficiently large $n$; thus, (c) holds.
\end{proof}

\begin{lem}\label{mlr_final_conclusion}
For any $X\in \supp{P}$ and any Turing functional $\Phi_j$ such that $\Phi_j(X)$ is defined, \[{\Phi_j(X)\notin\MLR}.\]
\end{lem}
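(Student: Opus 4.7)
The plan is to produce, uniformly in $j\in\omega$, a Martin-L\"of test $(\U^j_s)_{s\in\omega}$ that traps $\Phi_j(X)$ whenever $X\in\supp{P}$ and $\Phi_j(X)$ is defined. Fix $j$. For each $s\in\omega$ set $i=\lla j,s\rra$ and let
\[
\U^j_s=\bigcup_{(\sigma,\tau)\in \X[i]}\bigl\llbracket\Phi^\tau_{j,|\tau|}\bigr\rrbracket.
\]
Since $\X$ is a computable set and the values $\Phi^\tau_{j,|\tau|}$ can be computed from $\tau$, the family $(\U^j_s)_{s\in\omega}$ is uniformly effectively open.

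The first step is the measure bound. By condition~(c) in the choice of~$B$, every edge $(\sigma,\tau)\in\X[\lla j,s\rra]$ satisfies $|\Phi^\tau_{j,|\tau|}|>\lla\#(\sigma),s\rra\geq \#(\sigma)+s$, so each contributing cylinder has measure strictly less than $2^{-s-\#(\sigma)}$. Condition~(d) of Case~2 in the construction guarantees that for each $\sigma$ at most one extra edge $(\sigma,\tau)$ belongs to $\X$; hence summing over the edges yields
\[
\lambda(\U^j_s)\leq \sum_{\sigma\in\str}2^{-s-\#(\sigma)}=2^{-s}\sum_{\sigma\in\str}2^{-\#(\sigma)}\leq 2^{1-s}.
\]
After a shift of index, $(\U^j_s)_{s\in\omega}$ is a Martin-L\"of test.

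The second step is to show that $\Phi_j(X)\in\bigcap_s\U^j_s$ whenever $X\in\supp{P}$ and $\Phi_j(X)\in\cs$. Fix such an $X$ and any $s\in\omega$, and set $i=\lla j,s\rra$. The \nameref{lem:edge-guarantees}~\ref{lem:edge-guarantees} says that $X$ is an $i$-continuation of every initial segment $X\uh m$ with $\task(m)=i$. Moreover, since $X\in\supp{P}$ we have $P(X\uh n)>0$ for all $n$; the \nameref{lem:blocked-flow}~\ref{lem:blocked-flow} therefore forces $d(X\uh n)<1$ for every $n$, so $X$ is not $i$-discarded. The \nameref{lem:extension}~\ref{lem:extension} now furnishes an $i$-edge $(\sigma,\tau)\in\X[i]$ with $\sigma\prec\tau\prec X$. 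Because $\Phi_j(X)$ is defined, the string $\Phi^\tau_{j,|\tau|}$ is an initial segment of $\Phi_j(X)$, so $\Phi_j(X)\in\llbracket\Phi^\tau_{j,|\tau|}\rrbracket\subseteq\U^j_s$.

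Thus $\Phi_j(X)$ fails the Martin-L\"of test $(\U^j_s)_{s\in\omega}$ and cannot be Martin-L\"of random. The only nontrivial step is the measure estimate, and the pairing inequality $\lla m,n\rra\geq m+n$ together with condition~(d) is precisely what makes it go through; everything else is assembled from the lemmata already proved in the previous subsection.
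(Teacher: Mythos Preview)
Your argument is correct and is essentially the paper's own proof: the test $\U^j_s=\bigcup_{(\sigma,\tau)\in\X[\lla j,s\rra]}\llbracket\Phi^\tau_{j,|\tau|}\rrbracket$ coincides with the paper's $\U_s=\bigcup_{n:\task(n)=\lla j,s\rra}\bigcup_{\sigma\in\C_n}\llbracket\Phi_{j,n}^{\beta(\sigma,q_{n-1},n)}\rrbracket$, and both the measure bound (via condition~(c), condition~(d), and the pairing inequality) and the coverage argument (via the \nameref{lem:edge-guarantees}, the \nameref{lem:blocked-flow}, and the \nameref{lem:extension}) match. The only cosmetic difference is that you obtain $2^{1-s}$ and reindex, whereas the paper states the bound as $2^{-s}$ directly.
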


\begin{proof}
For $s\in\omega$, let
	\[
	\U_s=
	\bigcup_{n\colon\task(n)=\lla j,s\rra}\;\bigcup_{\sigma\in \C_n}
	\;\;\;\llbracket\Phi_{j,n}^{\beta(\sigma,q_{n-1},n)}\rrbracket
	\]
where $\C_n$ is the set of the same name that was defined during the construction.	
	
Fix $s \in \omega$. Since $X\in \supp{P}$ and $\Phi_j(X)$ is defined, by \nameref{lem:edge-guarantees}~\ref{lem:edge-guarantees}, $X$ is an $i$-continuation of $X\uh m$ for $i=\lla j,s\rra$ and every $m\in\omega$  such that $\task(m)=i$.
Since~${X\in \supp{P}}$, $X$~cannot be $i$-discarded.
Then, by \nameref{lem:extension}~\ref{lem:extension}, there are $n,m\in\omega$ with $m<n$ such that there is an extra $i$-edge $(X\uh m,\beta(X\uh m,q_{n-1},n))$ such that $\beta(X\uh m,q_{n-1},n)=X\uh n$. 
It follows that $\llbracket\Phi_j^{X\uh n}\rrbracket$ is enumerated into $\U_s$.

Since $\bigl|\Phi_{j,n}^{\beta(\sigma,q_{n-1},n)}\bigr|>\lla \#(\sigma),s\rra$ for each $n \in \omega$ and $\sigma\in \C_n$,
\[
\lambda(\U_s)\leq\sum_{n\colon\task(n)=\lla j,s\rra}\;\sum_{\sigma\in \C_n}2^{-\lla \#(\sigma),s\rra}\leq 2^{-s}.
\]
Hence, $(\U_s)_{s \in \omega}$ is a Martin-L\"of test covering $\Phi_j(X)$, and thus ${\Phi_j(X)\notin\MLR}$.
\end{proof}

This completes the proof of Theorem~\ref{thm:template-1}, as \nameref{lemma_contin1}~\ref{lemma_contin1} establishes the Theorem's condition~(i), \nameref{lem:semimeasure-support}~\ref{lem:semimeasure-support} establishes condition~(ii), and Lemma~\ref{mlr_final_conclusion} establishes condition~(iii).

\medskip

In light of the second paragraph of the proof of Lemma \ref{mlr_final_conclusion} we can now formulate an intuitive understanding of  \nameref{lem:extension}~\ref{lem:extension}: It states that every path (that meets the conditions in the statement of the lemma) will eventually either be removed from the support of the semi-measure during its construction, or, if not, will be treated using the predicate $B$ to make sure all paths that remain in the support have the desired properties. In either case, the construction succeeds.

\subsection{A new application of the technique}

We now turn to the proof of Theorem \ref{thm-prob-alg2}, an extension of V'yugin's Theorem \ref{thm-prob-alg}.

\theoremstyle{thm}
\newtheorem*{thm:thm-prob-alg2}{Theorem \ref{thm-prob-alg2}}
\begin{thm:thm-prob-alg2}
For any $\delta\in(0,1)$, there is a probabilistic algorithm  that produces  with probability at least~$1-\delta$  a non-computable sequence that is not of DNC degree.
\end{thm:thm-prob-alg2}

To prove Theorem \ref{thm-prob-alg2}, we prove a strengthening of Theorem \ref{thm:template-1} in terms of a family of weak notions of randomness; just as Theorem \ref{thm-prob-alg} follows from Theorem \ref{thm:template-1}, so too will Theorem \ref{thm-prob-alg2} follow from this strengthening.  The following notion was explicitly defined by Higuchi~et~al.~\cite{HigHudSim14} and was further studied by Simpson and Stephan~\cite{SimSte15}.

\begin{defn}
Let $f\!\colon\str\rightarrow\omega$ be a total computable function.
\begin{itemize}
\item[(i)] An \emph{$f$\!-Martin-L\"of test} is a sequence of uniformly c.e.\ sets of strings $(U_i)_{i\in\omega}$ such that 
\[
\sum_{\sigma\in U_i}2^{-f(\sigma)}\leq 2^{-i}
\]
for every $i\in\omega$.
\item[(ii)] A sequence $X\in\cs$ is \emph{$f$\!-random} if $X\notin\bigcap_{i\in\omega}\llb U_i\rrb$ for every $f$\!-Martin-L\"of test $(U_i)_{i\in\omega}$.
\end{itemize}
\end{defn}

We will focus our attention on notions of $f$\!-randomness for sequences $X$ and functions~$f$ where $f$~is \emph{unbounded along} $X$; that is, $\lim_{n\rightarrow\infty}f(X\uh n)=\infty$.  We can now state our generalization of Theorem \ref{thm:template-1}.

\begin{samepage}
\begin{thm}\label{thm:template-2}
For each $\delta\in(0,1)$, there is a left-c.e.\ semi-measure $P$ such that
\begin{itemize}
\item[(i)] $P$ has no atoms;
\item[(ii)] $\overline{P}(\cs)=\overline{P}(\supp{P})>1-\delta$; and
\item[(iii)] for each $X\in \supp{P}$ and each Turing functional $\Phi$, if $\Phi(X)$ is defined, then $\Phi(X)$~is not $f$\!-random  for any computable function $f$ that is unbounded along $\Phi(X)$.
\end{itemize}
\end{thm}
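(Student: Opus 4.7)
The plan is to adapt V'yugin's proof of Theorem~\ref{thm:template-1}, changing only the predicate $B$ and the test that will witness non-randomness. Fix an effective enumeration $(\varphi_k)_{k\in\omega}$ of all partial computable functions $\str\to\omega$ and index tasks by triples $i=\lla j,k,s\rra$. For such a task we declare that $B(q',\sigma,\tau)$ holds iff (a)~$\sigma\preceq\tau$, (b)~$d'(\tau\uh l)<1$ for $1\leq l\leq|\tau|$, and (c)~$\varphi_{k,|\tau|}\bigl(\Phi_{j,|\tau|}^{\tau}\bigr)\halts>\lla\#(\sigma),s\rra$, where $d'$ is the flow-delay function of $q'$. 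Retain the choice $\ct(n)=(n+n_0)^2$, with $n_0$ still chosen so that $\sum_n(n+n_0)^{-2}<\delta$. Feeding these data into the template of Section~\ref{subsec-template} produces a left-c.e.\ semi-measure $P$ and its associated flow objects.

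The verifications of clauses~(i) and~(ii) are inherited essentially for free. Clause~(i) is the Continuity Lemma~(Lemma~\ref{lemma_contin1}), which is generic. Clause~(ii) is the Measure Lemma~(Lemma~\ref{lem:semimeasure-support}); its proof depends only on $\ct$ and on the generic mechanics of the template, never on the internals of $B$, so it transports verbatim. The Cut-off Lemma~(Lemma~\ref{lem:blocked-flow}) is likewise generic and still applies.

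For clause~(iii), fix $X\in\supp{P}$, a Turing functional $\Phi_j$ such that $\Phi_j(X)$ is defined, and a total computable $f=\varphi_k$ that is unbounded along $\Phi_j(X)$. We first reprove the Continuation Existence Lemma for task $i=\lla j,k,s\rra$: given $m$ with $\task(m)=i$, we must show that $B(q_{n-1},X\uh m,X\uh n)$ holds for all sufficiently large $n$ with $\task(n)=i$. Clause~(a) is trivial and clause~(b) is the Cut-off Lemma applied to $X\in\supp{P}$. For clause~(c), since $\Phi_j(X)$ is defined, $\Phi_{j,n}^{X\uh n}$ grows to arbitrarily long prefixes of $\Phi_j(X)$; since $f$ is total and unbounded along $\Phi_j(X)$, the approximation $\varphi_{k,n}$ eventually converges on each such prefix and exceeds any preset threshold, in particular $\lla\#(X\uh m),s\rra$. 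Because $X$ is not $i$-discarded (being in $\supp{P}$), the Edge Existence Lemma~(Lemma~\ref{lem:extension}) yields extra $i$-edges $(\sigma,\tau)$ below $X$ for every $s$.

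Finally, for each $s$ we enumerate the uniformly c.e.\ set of strings
\[
U_s\;=\;\bigl\{\Phi_{j,n}^{\beta(\sigma,q_{n-1},n)}\colon n\in\omega,\ \task(n)=\lla j,k,s\rra,\ \sigma\in\C_n\bigr\}.
\]
Each enumerated $\tau$ came from some $\sigma\in\C_n$ with $f(\tau)>\lla\#(\sigma),s\rra$, so
\[
\sum_{\tau\in U_s}2^{-f(\tau)}\;\leq\;\sum_{\sigma\in\str}2^{-\lla\#(\sigma),s\rra}\;\leq\;2^{-s}
\]
by the pairing inequality $\lla m,n\rra\geq m+n$ (after, if need be, a cosmetic shift of $s$ to absorb $\sum_\sigma 2^{-\#(\sigma)}$). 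Thus $(U_s)_{s\in\omega}$ is an $f$-Martin-L\"of test, and the previous paragraph ensures $\Phi_j(X)\in\llb U_s\rrb$ for every $s$, so $\Phi_j(X)$ is not $f$-random. The delicate point in the whole argument is clause~(c) of the adapted Continuation Existence Lemma: one must reconcile two parallel approximations, that of $\varphi_k$ and that of $\Phi_j^{X\uh n}$, and verify that the semantic unboundedness of $f$ along $\Phi_j(X)$ manifests itself through these approximations in time for the predicate to fire at infinitely many stages of task $\lla j,k,s\rra$.
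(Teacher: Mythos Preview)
Your adaptation follows the paper's strategy, but there is a genuine gap in clause~(c) of your predicate $B$ that breaks the Continuation Existence Lemma. You require $\varphi_{k,|\tau|}\bigl(\Phi_{j,|\tau|}^{\tau}\bigr)\halts$, i.e., that $\varphi_k$ halt on the string $\Phi_{j,|\tau|}^\tau$ \emph{within $|\tau|$ steps}. For the Continuation Existence argument you then need this to hold with $\tau=X\uh n$ for almost all $n$ with $\task(n)=i$. But $\Phi_{j,n}^{X\uh n}$ is a \emph{moving} target: as $n$ grows so, in general, does the input handed to $\varphi_k$, and nothing forces the running time of $\varphi_k$ on that input to stay below $n$. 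Concretely, take $\Phi_j$ to be the identity functional and let $\varphi_k$ be total but deliberately slow (say $\varphi_k(\sigma)=|\sigma|$, computed only after $2^{|\sigma|}$ steps); then $\varphi_{k,n}(\Phi_{j,n}^{X\uh n})=\varphi_{k,n}(X\uh n)$ diverges for every $n$, your clause~(c) never fires, and no $i$-edge is ever attached along $X$. Your sentence ``the approximation $\varphi_{k,n}$ eventually converges on each such prefix'' is true of any \emph{fixed} prefix, but that is not what clause~(c) asks for. (The paper flags exactly this pitfall in the footnote attached to condition~(c$^*$).)

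The paper repairs this in two steps. First, it replaces your~(c) by~(c$^*$): there exists \emph{some} $\rho\preceq\Phi_{j,|\tau|}^\tau$ with $\phi_{e,|\tau|}(\rho)\halts>\lla\#(\sigma),s\rra$. A single fixed witness $\rho\prec\Phi_j(X)$ then works for all sufficiently large $n$, which is what the Modified Continuation Existence Lemma~\ref{lem:mod-edge-guarantees} records. Second, the string actually enumerated into $U_s$ is $\eta=\Phi_{j,n}^{\beta(\sigma,q_{n-1},n)}\succeq\rho$, not $\rho$ itself, so to bound $\sum_{\eta\in U_s}2^{-f(\eta)}$ one needs $f(\eta)\geq f(\rho)$; this is why the paper first reduces to \emph{monotone} $f$ via Lemma~\ref{lem-monotone} (replacing each $\phi_e$ by $\phi_e^*$). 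With these two adjustments your outline goes through as written; without them the predicate you wrote down may simply never be satisfied for a perfectly legitimate pair $(\Phi_j,\varphi_k)$.
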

\end{samepage}

We call a function $f\!\colon\str\rightarrow\omega$ {\em monotone} if for any $\sigma,\tau\in\str$ with $\sigma\preceq\tau$ we have that~${f(\sigma)\leq f(\tau)}$. 
For any~$f\!\colon\str\rightarrow\omega$, we define $f^*\!\colon\str\rightarrow\omega$ by letting, for each $\sigma \in \str$,
\[f^*(\sigma)=\max\{f(\tau)\colon \tau\preceq \sigma\}.\] 
Clearly, $f^*$ is monotone and we have $f(\sigma)\leq f^*(\sigma)$ for all $\sigma\in\str$. If $f$ is furthermore computable and unbounded along some $X\in\cs$, then the same holds for~$f^*$.
The proof of Theorem~\ref{thm:template-2} below will only ensure that~(iii)~holds for monotone~$f$. The following argument establishes that this is sufficient.
\begin{lem}\label{lem-monotone}
Let $f\!\colon\str\rightarrow\omega$ be a total computable function.  Then $X\in\cs$ is $f$\!-random if and only if $X$ is $f^*$-random.
\end{lem}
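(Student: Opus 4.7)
The plan is to prove the two implications separately, since one is essentially immediate while the other requires a reindexing argument.

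First I would handle the direction \emph{$f^*$-random $\Rightarrow$ $f$-random}. Since $f(\sigma) \leq f^*(\sigma)$ for every $\sigma \in \str$, we have $2^{-f(\sigma)} \geq 2^{-f^*(\sigma)}$. Consequently, every $f$-Martin-L\"of test $(U_i)_{i\in\omega}$ is also an $f^*$-Martin-L\"of test, because $\sum_{\sigma\in U_i}2^{-f^*(\sigma)}\leq \sum_{\sigma\in U_i}2^{-f(\sigma)}\leq 2^{-i}$. Hence if $X$ passes every $f^*$-test, it passes every $f$-test.

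For the converse \emph{$f$-random $\Rightarrow$ $f^*$-random}, suppose $X$ is captured by some $f^*$-Martin-L\"of test $(V_i)_{i\in\omega}$, and I want to produce an $f$-Martin-L\"of test $(U_i)_{i\in\omega}$ that also captures $X$. For each $\sigma\in\str$, let $g(\sigma)$ denote the shortest prefix $\tau\preceq\sigma$ with $f(\tau)=f^*(\sigma)$, which exists by the definition of $f^*$ and is computable from $\sigma$ since $f$ is computable. Define $U_i=\{g(\sigma)\colon \sigma\in V_i\}$; this is uniformly c.e.\ in $i$. Since $g(\sigma)\preceq\sigma$, we have $\llb V_i\rrb\subseteq \llb U_i\rrb$, so $X\in\bigcap_i\llb U_i\rrb$.

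The one step that requires real care is the test-size bound. The key observation is that for any $\tau\in U_i$, every $\sigma\in V_i$ with $g(\sigma)=\tau$ satisfies $f^*(\sigma)=f(\tau)$ by the definition of $g$; and since $\tau\preceq\sigma$ gives $f^*(\tau)\leq f^*(\sigma)=f(\tau)\leq f^*(\tau)$, in fact $f(\tau)=f^*(\tau)$ as well. Now for each $\tau\in U_i$, pick (say, lexicographically least) a witness $\sigma_\tau\in V_i$ with $g(\sigma_\tau)=\tau$; the map $\tau\mapsto\sigma_\tau$ is injective into $V_i$. Therefore
\[
\sum_{\tau\in U_i}2^{-f(\tau)}=\sum_{\tau\in U_i}2^{-f^*(\sigma_\tau)}\leq\sum_{\sigma\in V_i}2^{-f^*(\sigma)}\leq 2^{-i},
\]
so $(U_i)_{i\in\omega}$ is an $f$-Martin-L\"of test covering $X$, contradicting $f$-randomness. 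I expect this bookkeeping---ensuring that collapsing each $\sigma$ to its prefix $g(\sigma)$ does not inflate the measure sum---to be the main (though mild) obstacle; the crucial ingredient making it work is precisely that $f$ and $f^*$ agree on the elements of $U_i$.
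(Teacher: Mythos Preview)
Your proposal is correct and follows essentially the same approach as the paper: both directions are proved by contrapositive, the easy direction uses $f\leq f^*$ to show every $f$-test is already an $f^*$-test, and the hard direction replaces each $\sigma$ in an $f^*$-test by a prefix on which $f$ and $f^*$ agree. Your treatment of the size bound is in fact slightly more careful than the paper's: the paper writes $\sum_{\widehat\sigma\in \widehat U_i}2^{-f(\widehat\sigma)}=\sum_{\sigma\in U_i}2^{-f^*(\sigma)}$ without addressing possible collisions $\widehat{\sigma_1}=\widehat{\sigma_2}$, whereas your witness map $\tau\mapsto\sigma_\tau$ makes the required inequality explicit.
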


\begin{proof}
($\Leftarrow$:) Suppose that $X\in\cs$ is not $f$\!-random.  Then there is some $f$\!-Martin-L\"of test $(U_i)_{i\in\omega}$ such that $X\in\bigcap_{i\in\omega}\llb U_i\rrb$.  We claim that $(U_i)_{i\in\omega}$ is an $f^*$-Martin-L\"of test.  Indeed, since $f(\sigma)\leq f^*(\sigma)$ for all $\sigma\in\str$,
\[
\sum_{\sigma\in U_i}2^{-f^*(\sigma)}\leq \sum_{\sigma\in U_i}2^{-f(\sigma)}\leq 2^{-i}.
\]
It thus follows that $X$ is not $f^*$-random.

($\Rightarrow$:) Now suppose that  $X$ is not $f^*$-random. Then there is some $f^*$-Martin-L\"of test $(U_i)_{i\in\omega}$ such that $X\in\bigcap_{i\in\omega}\llb U_i\rrb$.  We modify $(U_i)_{i\in\omega}$ to produce an $f$\!-Martin-L\"of test covering $X$ as follows.  First note that for every $\sigma\in\str$, if $f(\sigma)\neq f^*(\sigma)$, then there is some $\tau\prec\sigma$ such that $f(\tau)=f^*(\tau)=f^*(\sigma)$.  In this case, let us set $\widehat\sigma=\tau$.  In the case that $f(\sigma)=f^*(\sigma)$, set $\widehat\sigma=\sigma$; in either case, we have $\widehat\sigma\preceq\sigma$.  Then for each $i\in\omega$ and $\sigma \in \str$, we define $\widehat {U}_i$ so that $\widehat\sigma\in \widehat {U}_i$ if and only if $\sigma\in U_i$.  It follows that $(\widehat{U}_i)_{i\in\omega}$ is an $f$\!-Martin-L\"of test, since
\[
\sum_{\widehat\sigma\in \widehat{U}_i}2^{-f(\widehat\sigma)}= \sum_{\sigma\in U_i}2^{-f^*(\sigma)}\leq 2^{-i}.
\]
Next, since for every $\sigma\in\str$ we have $\widehat\sigma\preceq\sigma$, it follows that $\llb U_i \rrb \subseteq \llb\widehat{U}_i \rrb$ for every $i\in\omega$, and hence $X\in\bigcap_{i\in\omega}\llb U_i\rrb\subseteq\bigcap_{i\in\omega}\llb \widehat{U}_i\rrb$. We thus conclude that $X$ is not $f$\!-random.
\end{proof}

The general strategy of the proof of Theorem \ref{thm:template-2} is much like that of the proof of Theorem~\ref{thm:template-1}, but with several modifications.  First, since we want that elements of $\supp{P}$ cannot compute any $f$\!-random sequences for any monotone unbounded computable $f\!\colon\str\rightarrow\omega$, our construction will have to involve all total computable functions.  Of course, there is no effective enumeration of such functions, so we have to work with an enumeration of all partial computable functions $(\phi_e)_{e\in\omega}$ (where each $\phi_e$ is viewed as a map from $\str$ to $\omega$).  Moreover, we can assume that all functions~$\phi_e$ are monotone simply by replacing each~$\phi_e$ with the corresponding monotone~$\phi_e^*$.

Second, we have to modify the definition of the predicate $B$ from the proof of Theorem~\ref{thm:template-1} as follows:
For an elementary network $q'$ and nodes $\sigma$,$\tau$ with $\task(|\sigma|)=\task(|\tau|)$, $B(q',\sigma,\tau)$ is defined to hold if and only if
\begin{itemize}
\item[(a)] $\sigma\preceq \tau$,
\item[(b)] $d'(\tau\uh k)<1$ for all $k$ such that $1\leq k\leq |\tau|$, where $d'$ is the flow-delay function of~$q'$, and
\item[(c$^*$)] there is some $\rho\preceq\Phi_{j,|\tau|}^\tau$ such that $\phi_{e,|\tau|}(\rho)\halts
>\lla \#(\sigma),s\rra$, where $\task(|\sigma|)=\lla j,s,e\rra$.\footnote{We cannot simply let $\rho=\Phi_{j,|\tau|}^\tau$ as there is no guarantee that running $\phi_e$ with input $\Phi_{j,|\tau|}^\tau$ terminates within $|\tau|$~steps.} 
\end{itemize}

Observe that for non-total $\phi_e$ condition (c$^*$) may never become true and as a result we may never attach a $\task(|\sigma|)$-edge to~$\sigma$. This is not a problem, as condition (iii) in Theorem~\ref{thm:template-2} only makes a promise about total functions, so no action is required in this case. Tasks of the form~$\lla j,\cdot,e\rra$ can therefore be safely ignored when verifying that the construction yields the desired semi-measure.

As in the previous subsection, that \nameref{lemma_contin1}~\ref{lemma_contin1} holds is an inherent feature of the construction technique, independently of the specific choice of the predicate $B$ and the countdown function~$\ct$. 
\nameref{lem:semimeasure-support}~\ref{lem:semimeasure-support} also still holds since its truth does not depend on the specific choice of~$B$ while $\ct$~is unchanged. As for  \nameref{lem:blocked-flow}~\ref{lem:blocked-flow}, an inspection of its proof shows that it only relies on condition~(b) of the predicate~$B$ which we haven't changed from the last subsection; so this lemma still holds as well. The Continuation Existence Lemma, however, needs to be modified. 

\begin{lem}[Modified Continuation Existence Lemma]\label{lem:mod-edge-guarantees}
Suppose that we have a Turing functional~$\Phi_j$, some $X\in \supp{P}\cap\dom(\Phi_j)$, and a monotone total computable function $\phi_e$ such that $\phi_e(\Phi_j(X)\uh n)$ is unbounded in $n$. Then for every $i$ of the form $\lla j,s,e\rra$ for some~${s\in\omega}$, $X$ is an $i$-continuation of~$X\uh m$ for every~$m\in\omega$  such that $\task(m)=i$.
\end{lem}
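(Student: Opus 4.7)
The plan is to adapt the proof of Lemma \ref{lem:edge-guarantees} (the original Continuation Existence Lemma), changing only the argument for the modified clause~(c$^*$). Fix $j,s,e,m\in\omega$ with $i=\lla j,s,e\rra$ and $\task(m)=i$. Since every initial segment of $X$ lies below $X$, I just need to verify that the three clauses (a), (b), (c$^*$) of $B(q_{n-1},X\uh m,X\uh n)$ all hold for almost every $n$ with $\task(n)=i$. Clause~(a) is immediate for $n\geq m$.

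For~(b), I would first observe by direct inspection of the construction that $d_n$ differs from $d_{n-1}$ only on strings of length exactly~$n$, and therefore $d_{n-1}(X\uh k)=d(X\uh k)$ for all $1\leq k\leq n-1$, while $d_{n-1}(X\uh n)=d_0(X\uh n)=0<1$. Because $X\in\supp P$ forces $P(X\uh k)>0$ for every $k$, the Cut-off Lemma (Lemma~\ref{lem:blocked-flow}) rules out $d(X\uh k)=1$ at any $k\geq 1$, so~(b) holds at every such stage.

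The substantive new work is clause~(c$^*$). Set $K=\lla\#(X\uh m),s\rra$. By monotonicity of~$\phi_e$ together with the assumption that $\phi_e(\Phi_j(X)\uh n)$ is unbounded in $n$, we have $\phi_e(\Phi_j(X)\uh n)\to\infty$, so there is a fixed~$N$ with $\phi_e(\Phi_j(X)\uh N)>K$. Put $\rho=\Phi_j(X)\uh N$. Since $\Phi_j(X)\in\cs$ is defined, for all sufficiently large $n$ we have $\Phi_{j,n}^{X\uh n}\succeq\rho$; since $\phi_e(\rho)$ converges, for all sufficiently large $n$ we also have $\phi_{e,n}(\rho)\halts$ with value $\phi_e(\rho)>K$. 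Hence (c$^*$) is witnessed by this single fixed~$\rho$ at all sufficiently large~$n$ with $\task(n)=i$.

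I do not foresee a serious obstacle: the argument is structurally identical to the original Continuation Existence Lemma, and the only real change is that the role previously played by $|\Phi_{j,n}^{X\uh n}|\to\infty$ is now played by the combination of totality and unboundedness of $\phi_e$ along $\Phi_j(X)$, which produces the fixed witness $\rho$ at once.
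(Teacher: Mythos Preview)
Your argument is correct and follows essentially the same route as the paper's proof: clause~(b) is handled by invoking the Cut-off Lemma exactly as in Lemma~\ref{lem:edge-guarantees}, and for clause~(c$^*$) both you and the paper fix a single witness $\rho\preceq\Phi_j(X)$ with $\phi_e(\rho)$ large enough and then observe that $\rho\preceq\Phi_{j,n}^{X\uh n}$ and $\phi_{e,n}(\rho)\halts$ for all sufficiently large~$n$. Your treatment of~(b) is in fact slightly more careful than the paper's, since you make explicit that $d_{n-1}$ agrees with $d$ on strings of length at most $n-1$ and equals $0$ at length~$n$, a point the original proof of Lemma~\ref{lem:edge-guarantees} leaves implicit.
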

\begin{proof}
That condition~(b) in the predicate~$B$ holds is shown as in the proof of Lemma~\ref{lem:edge-guarantees}. 

For condition~(c), since $X\in\dom(\Phi_j)$ and $\phi_e(\Phi_j(X)\uh n)$ is unbounded in $n$, there are~$n_1$ and~$n_2$ such that $\phi_{e,n_2}(\Phi_j^{X\uh n_1}) {\downarrow}> \lla \#(X\uh m),s\rra$. Then for any $n \geq \max\{n_1,n_2\}$, $\Phi_j^{X\uh n}$ has the initial segment~$\rho=\Phi_j^{X\uh n_1}$ such that 
$\phi_{e,n}(\rho) = \phi_{e,n_2}(\rho) > \lla \#(X\uh m),s\rra$.
Therefore, condition~(c) holds for any large enough~$n$ with $\task(n)=i$.
\end{proof}

Lastly, we prove the following.

\begin{lem}\label{t-random_final_conclusion}
Let $f\!\colon\str\rightarrow\omega$ be a monotone total computable function.  For any~${X\in \supp{P}}$ and any Turing functional $\Phi$ such that $X\in\dom(\Phi)$, if $f(\Phi(X)\uh n)$ is unbounded in~$n$, then $\Phi(X)$ is not $f$\!-random.
\end{lem}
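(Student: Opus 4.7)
The plan is to mirror the proof of Lemma~\ref{mlr_final_conclusion}, replacing the Martin-L\"of test with an $f$-Martin-L\"of test that exploits condition~(c$^*$) of the modified predicate~$B$. First, fix indices $j$ and $e$ with $\Phi=\Phi_j$ and $\phi_e=f$; since $f$ is already monotone, the silent monotonization $\phi_e^*$ used in the construction agrees with $f$. Condition~(c$^*$) guarantees that whenever a $\lla j,s,e\rra$-edge is attached at a node $\sigma\in\C_n$, there exists a shortest initial segment $\rho_{\sigma,n}$ of $\Phi_{j,n}^{\beta(\sigma,q_{n-1},n)}$ satisfying $\phi_{e,n}(\rho_{\sigma,n})\halts>\lla\#(\sigma),s\rra$.

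For each $s\in\omega$, define
\[
U_s=\bigl\{\rho_{\sigma,n}\colon \task(n)=\lla j,s,e\rra \text{ and }\sigma\in\C_n\bigr\}.
\]
I would then check that $(U_s)_{s\in\omega}$ is an $f$-Martin-L\"of test. Condition~(d) of Case~2 ensures that each $\sigma$ receives at most one outgoing extra edge during the entire construction, so each such $\sigma$ contributes at most one $\rho_{\sigma,n}$ to $U_s$. Since $f(\rho_{\sigma,n})=\phi_e(\rho_{\sigma,n})\geq\phi_{e,n}(\rho_{\sigma,n})>\lla\#(\sigma),s\rra$, the pairing inequality $\lla m,n\rra\geq m+n$ yields
\[
\sum_{\rho\in U_s}2^{-f(\rho)}\leq\sum_{\sigma}2^{-\lla\#(\sigma),s\rra}\leq 2^{-s}\sum_{\sigma}2^{-\#(\sigma)},
\]
which is bounded by a constant multiple of $2^{-s}$ and can be brought below $2^{-s}$ by a harmless index shift.

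Finally, I would verify that $\Phi(X)\in\llb U_s\rrb$ for every~$s$. Since $X\in\supp{P}\cap\dom(\Phi_j)$ and $f(\Phi(X)\uh n)=\phi_e(\Phi(X)\uh n)$ is unbounded in~$n$ by hypothesis, the \nameref{lem:mod-edge-guarantees}~\ref{lem:mod-edge-guarantees} tells us that $X$ is an $\lla j,s,e\rra$-continuation of each of its $\lla j,s,e\rra$-node initial segments; moreover, $X$ is not $\lla j,s,e\rra$-discarded, for otherwise the \nameref{lem:blocked-flow}~\ref{lem:blocked-flow} would force $P(X\uh k)=0$ for some $k$, contradicting $X\in\supp{P}$. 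The \nameref{lem:extension}~\ref{lem:extension} then produces $m<n$ with $\task(m)=\task(n)=\lla j,s,e\rra$ and $(X\uh m,X\uh n)\in\X$, so $\beta(X\uh m,q_{n-1},n)=X\uh n$ and hence $\rho_{X\uh m,n}\preceq\Phi_{j,n}^{X\uh n}\preceq\Phi(X)$, placing $\Phi(X)$ in $\llb U_s\rrb$.

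The main obstacle is conceptual rather than technical: the new weighting $2^{-f(\rho)}$ of the $f$-test must be paired correctly with the certificate $\phi_{e,|\tau|}(\rho)>\lla\#(\sigma),s\rra$ built into~(c$^*$). That condition was engineered precisely so that each enumerated string~$\rho$ carries weight at most $2^{-\lla\#(\sigma),s\rra}$ in the $f$-test, exactly substituting for the length bound $\bigl|\Phi_{j,|\tau|}^\tau\bigr|>\lla\#(\sigma),s\rra$ from~(c) that controlled Lebesgue measure in the Martin-L\"of case. Once this correspondence is in place, the combinatorics of the weight bound and the covering argument for~$\Phi(X)$ proceed exactly as in Lemma~\ref{mlr_final_conclusion}.
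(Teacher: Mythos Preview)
Your proof is correct and follows essentially the same route as the paper: define an $f$-Martin-L\"of test from the $\lla j,s,e\rra$-edges, invoke the Modified Continuation Existence and Edge Existence Lemmas to cover $\Phi(X)$, and bound the $f$-weight via the pairing inequality built into condition~(c$^*$). The only difference is that you enumerate the witness prefix~$\rho_{\sigma,n}$ into~$U_s$ rather than the full string~$\Phi_{j,n}^{\beta(\sigma,q_{n-1},n)}$ as the paper does; this lets you read off $f(\rho_{\sigma,n})>\lla\#(\sigma),s\rra$ directly, whereas the paper enumerates the longer string and then uses monotonicity of~$f$ to transfer the bound from~$\rho$ to it---a cosmetic variation on the same argument.
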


\begin{proof}
Let $e$ be the index of $f$ as a partial computable function and let $j$ be the index of~$\Phi$.  
Then we define an $f$\!-Martin-L\"of test $(U_s)_{s\in\omega}$, where $U_s$ consists of all strings of the form~$\Phi_{j,n}^{\beta(\sigma,\,q_{n-1},\,n)}$ where~$n\in\omega$, $\sigma\in \C_n$, and $\task(n)=\lla j,s,e\rra$. 

For each $X\in\supp{P}\cap\dom(\Phi_j)$ such that  $f(\Phi_j(X)\uh n)$ is unbounded in $n$, by the Modified Continuation Existence Lemma \ref{lem:mod-edge-guarantees}, $X$ is an $i$-continuation of $X\uh m$ for every~$i$ such that~${i=\lla j,s,e\rra}$ for some~$s\in\omega$ and every $m\in\omega$  such that $\task(m)=i$.
Furthermore, $X$ is not $i$\nobreakdash-discarded, and hence by \nameref{lem:extension}~\ref{lem:extension} there is an $i$-edge $\bigl(X\uh m,\beta(X\uh m,q_{n-1},n)\bigr)$ such that ${\beta(X\uh m,q_{n-1},n)=X\uh n}$ for some~$n,m\in\omega$ with~$m<n$.  Since $\task(n)=\lla j,s,e\rra$, it follows that $\Phi_{j,n}^{X\uh n}$~is enumerated into~$U_s$.

Choose any $\eta \in U_s$. Then, by definition of $U_s$, there is some $\tau$ such that $\Phi_{j,n}^\tau = \eta$ and some~$\sigma \in \C_{|\tau|}$ such that $(\sigma, \tau)\in \X$. By condition (c$^*$) of the predicate~$B$, the fact that this extra edge was added to the digraph implies that there is a witnessing initial segment $\rho \preceq \Phi_{j,n}^\tau$ such that 
\[\phi_e(\eta)=\phi_e( \Phi_{j,n}^{\tau})\geq 
\phi_{e}(\rho)
=\phi_{e,|\tau|}(\rho)
>\lla \#(\sigma),s\rra;\]
here the first inequality follows from the monotonicity of $\phi_e$.
As this line of reasoning applies to every $\eta \in U_s$, we obtain
 \[
 \sum_{\eta\in U_s}2^{-\phi_e(\eta)}\leq\sum_{n\colon\task(n)=\lla j,s,e\rra}\;\sum_{\sigma\in \C_n}2^{-\lla \#(\sigma),s\rra}\leq 2^{-s}.
 \] 
 Hence, $(U_s)_{s\in\omega}$ is an $f$\!-Martin-L\"of test covering $\Phi_j(X)$, and so $\Phi_j(X)$ is not $f$\!-random.  
\end{proof}
\noindent This completes the proof of Theorem~\ref{thm:template-2}.
We can recast this result in terms of autocomplexity as well as in terms of being of DNC degree.  Recall that the Kolmogorov complexity of a string~$\sigma\in\str$ is defined by 
$
K(\sigma)=\min\{|\tau|\colon U(\tau)\halts=\sigma\},
$
where $U$ is a universal, prefix-free Turing machine.  Moreover, a function $f\!\colon\omega\rightarrow\omega$ is called an \emph{order} if $f$ is unbounded and non-decreasing.

\begin{defn}
$X\in\cs$ is \emph{autocomplex} if there is an $X$-computable order $f$ such that ${K(X\uh n)\geq f(n)}$ for every $n\in\omega$.  
\end{defn}

The following two propositions provide alternative characterizations of $f$\!-randomness.

\begin{prop} [Higuchi, Hudelson, Simpson, Yokoyama~\cite{HigHudSim14}]
$X\in\cs$ is autocomplex if and only if there is some computable function $f\!\colon\str \rightarrow \omega$ such that $f$ is unbounded along $X$ and $X$~is $f$\!-random.
\end{prop}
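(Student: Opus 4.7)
My plan is to reduce both directions to a Levin-Schnorr--style characterization of $f$-randomness: for a computable $f \colon \str \to \omega$, the sequence $X$ is $f$-random if and only if there exists a constant $c$ such that $K(X \uh n) \geq f(X \uh n) - c$ for every $n \in \omega$, where $K$ denotes prefix-free Kolmogorov complexity. The forward direction of this auxiliary characterization follows from observing that $U_i := \{\sigma : K(\sigma) + i < f(\sigma)\}$ is a uniformly c.e.\ $f$-Martin-L\"of test: for $\sigma \in U_i$ we have $2^{-f(\sigma)} < 2^{-K(\sigma) - i}$, so by Kraft's inequality $\sum_{\sigma \in U_i} 2^{-f(\sigma)} \leq 2^{-i} \sum_\sigma 2^{-K(\sigma)} \leq 2^{-i}$. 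The reverse direction follows from a KC-style coding argument applied to any $f$-Martin-L\"of test covering $X$.

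For the reverse implication of the proposition, suppose $X$ is $f$-random for some computable $f$ that is unbounded along $X$. By Lemma~\ref{lem-monotone}, I would replace $f$ by the monotone $f^*$, so that $n \mapsto f(X \uh n)$ becomes non-decreasing. The characterization above then supplies a constant $c$ with $K(X \uh n) \geq f(X \uh n) - c$ for all $n$, and $h(n) := \max\{0, f(X \uh n) - c\}$ is an $X$-computable order: it is $X$-computable since $f$ is computable and $n \mapsto X \uh n$ is $X$-computable; it is non-decreasing by the monotonicity of $f$; and it tends to infinity since $f$ is unbounded along $X$. Hence $K(X \uh n) \geq h(n)$ witnesses autocomplexity.

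For the forward implication, suppose $X$ is autocomplex via an $X$-computable order $h$ with $K(X \uh n) \geq h(n)$. The plan here is to invoke the theorem of Kjos-Hanssen, Merkle, and Stephan that autocomplexity is equivalent to \emph{complexity}: under the assumption of autocomplexity, there already exists a \emph{computable} order $h'$ with $K(X \uh n) \geq h'(n)$ for all $n$. Defining $f \colon \str \to \omega$ by $f(\sigma) := h'(|\sigma|)$ produces a computable function that is unbounded along $X$ and satisfies $K(X \uh n) \geq h'(n) = f(X \uh n)$; the characterization above then certifies that $X$ is $f$-random.

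The principal obstacle is the forward implication, which rests on the nontrivial upgrade from an $X$-computable witness to a computable one provided by the Kjos-Hanssen-Merkle-Stephan theorem. A naive attempt to construct $f$ directly from $h$---for example by letting $f(\sigma)$ be the maximum value certified by running, within $|\sigma|$ steps and using at most $|\sigma|$ oracle bits, the functional computing $h$ on inputs $k \leq |\sigma|$---founders on the fact that the use and running time of this functional are only $X$-computable. Consequently the largest $k$ for which the computation converges in time may fall far short of $|\sigma|$, introducing a logarithmic overhead (from encoding this $k$ given $X \uh |\sigma|$) that the $O(1)$ slack in the Levin-Schnorr--style characterization cannot absorb.
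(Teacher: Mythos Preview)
The paper does not supply its own proof of this proposition; it is stated with attribution to Higuchi, Hudelson, Simpson, and Yokoyama and used as a black box. So there is no ``paper's proof'' to compare against.

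Your argument is sound. The Levin--Schnorr--style characterization of $f$-randomness (namely, $X$ is $f$-random iff $K(X\uh n)\geq f(X\uh n)-c$ for some constant $c$) is correct and is exactly what Higuchi et al.\ establish; both directions you sketch for it are standard. The reverse implication of the proposition then goes through as you describe, using Lemma~\ref{lem-monotone} to make $f$ monotone so that $n\mapsto f(X\uh n)$ is non-decreasing. For the forward implication you correctly identify that the real work is upgrading the $X$-computable order to a computable one, and you invoke the Kjos-Hanssen--Merkle--Stephan equivalence of autocomplexity and complexity to do this. That equivalence is part of the same circle of results the paper already cites (autocomplex $\Leftrightarrow$ DNC degree), so this dependency is consistent with the paper's presentation and is not circular. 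Your closing remarks about why a naive direct construction of $f$ from the $X$-computable $h$ fails are also accurate.
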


\begin{prop}[Kjos-Hanssen, Merkle, Stephan~\cite{KjoMerSte11}]
$X\in\cs$ is autocomplex if and only if $X$ is of DNC degree.
\end{prop}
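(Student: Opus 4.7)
\medskip

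\noindent\emph{Proof plan.} My approach will be to prove the two implications separately, exploiting the connection between DNC degrees and high-complexity strings.

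\emph{Autocomplex implies DNC degree.} Suppose $X$ is autocomplex, witnessed by an $X$-computable order $f$ with $K(X\uh n)\geq f(n)$ for every $n$. I would define an $X$-computable function $g$ as follows: on input $e$, use $X$ and $f$ to search for the least $n$ with $f(n) > 2\log(e+1) + c$, where $c$ is a constant coming from the universal prefix-free machine; then set $g(e)$ to be the natural number coded by $X\uh n$. To see that $g$ is DNC, note that if $\phi_e(e)\halts = g(e)$, then $X\uh n$ would admit a description of length at most $\log(e+1)+c'$ (namely, a code for $e$ together with the instruction \emph{compute $\phi_e(e)$}), contradicting $K(X\uh n)\geq f(n) > 2\log(e+1)+c$. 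Since $g\leq_T X$, this places $X$ in a DNC degree.

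\emph{DNC degree implies autocomplex.} Suppose $X$ is of DNC degree and fix a DNC function $g\leq_T X$. First, I would invoke (or reprove via the recursion theorem along the lines of K\"ucera's argument) the classical upgrade that any DNC function computes a function $h$ with $K(h(n))\geq n$ for all $n$. Fixing a Turing functional $\Psi$ and a use function $u\leq_T X$ with $\Psi^{X\uh u(n)}(n) = h(n)$, I then define
\[
f(n) = \max\{0,\ \max\{k : u(k)\leq n\} - c\},
\]
where $c$ absorbs the constants from the next step. Because $u(k)$ is finite for each $k$, $f$ is non-decreasing, unbounded, and $X$-computable; i.e., an $X$-computable order. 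For the complexity bound, note that from $X\uh n$ one can compute $h(f(n)+c)$ (since $u(f(n)+c)\leq n$), so $K(X\uh n)\geq K(h(f(n)+c)) - O(1) \geq f(n)+c - O(1) \geq f(n)$ once $c$ is chosen large enough to dominate the additive constants.

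\emph{Main obstacle.} The nontrivial step is the upgrade from an arbitrary DNC function to one with $K(h(n))\geq n$; a DNC function by itself only avoids the diagonal and need not take complex values. I expect to handle this via the standard trick: given $g$ DNC, for each $n$ use the recursion theorem to produce an index $e_n$ for a program that would output any alleged short description of $g(e_n)$, forcing $K(g(e_n))$ to be at least $n$, and then letting $h(n)=g(e_n)$. Once this upgrade is in hand, the rest of the backwards direction is routine bookkeeping with use functions, and the forwards direction is essentially a one-line diagonal argument.
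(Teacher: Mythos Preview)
The paper does not give its own proof of this proposition; it is merely quoted from Kjos-Hanssen, Merkle, and Stephan and used as a black box. So there is nothing in the paper to compare your proposal against.

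That said, your outline is essentially the standard argument and is correct in spirit. Two small points of care in the backward direction: first, the set $\{k : u(k)\leq n\}$ need not have a maximum unless you first replace $u$ by the monotone $u'(k)=\max_{j\leq k}u(j)$, which you can do harmlessly. Second, in the complexity estimate you should account for the cost of specifying $m=f(n)+c$ when recovering $h(m)$ from $X\uh n$; this introduces an additive $O(\log m)$ term, so the order you actually get is something like $g(n)-2\log g(n)-O(1)$ rather than $g(n)-c$. This is still an $X$-computable order, so the conclusion stands, but the bookkeeping as written is slightly off.
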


We can now recast Theorem \ref{thm:template-2} as follows.

\begin{corollary}\label{cor:template}
For each $\delta\in(0,1)$, there is a left-c.e.\ semi-measure $P$ such that
\begin{itemize}
\item[(i)] $P$ has no atoms;
\item[(ii)] $\overline{P}(\cs)=\overline{P}(\supp{P})>1-\delta$; and
\item[(iii)] for each $X\in \supp{P}$ and each Turing functional $\Phi$, if $\Phi(X)$ is defined, then $\Phi(X)$~is not autocomplex, or equivalently, $\Phi(X)$ is not of DNC degree. Equivalently, for each $X\in \supp{P}$, $X$~is not of DNC degree.
\end{itemize}
\end{corollary}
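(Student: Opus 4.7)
The corollary is essentially a repackaging of Theorem~\ref{thm:template-2} through two cited characterizations, so the plan is to take the semi-measure $P$ delivered by Theorem~\ref{thm:template-2} and verify that its properties (i) and (ii) transfer verbatim, while condition~(iii) can be re-expressed using the results of Higuchi--Hudelson--Simpson--Yokoyama and of Kjos-Hanssen--Merkle--Stephan quoted immediately before the statement.

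The core implication to verify is: if $X \in \supp{P}$ and $\Phi(X)$ is defined, then $\Phi(X)$ is not autocomplex. The argument is a short contrapositive. Suppose, towards a contradiction, that $\Phi(X)$ is autocomplex. By the characterization of Higuchi et al., there exists a computable function $f \colon \str \to \omega$ such that $f$ is unbounded along $\Phi(X)$ and $\Phi(X)$ is $f$-random. But Theorem~\ref{thm:template-2}(iii) asserts precisely that no such $f$ can exist for any $\Phi(X)$ with $X \in \supp{P}$, a contradiction. Thus $\Phi(X)$ is not autocomplex, and by the Kjos-Hanssen--Merkle--Stephan characterization, equivalently $\Phi(X)$ is not of DNC degree.

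For the final reformulation (``for each $X \in \supp P$, $X$ is not of DNC degree''), I would argue the two directions separately. Taking $\Phi$ to be the identity functional shows that if every $\Phi(X)$ fails to be of DNC degree, then in particular $X$ itself is not of DNC degree. Conversely, being of DNC degree is closed upwards under Turing reducibility (if $Y$ computes a DNC function and $X \geq_\T Y$, then $X$ computes the same function), so not being of DNC degree is closed downwards; since $\Phi(X) \leq_\T X$ whenever $\Phi(X)$ is defined, $X$ not being of DNC degree forces $\Phi(X)$ not to be of DNC degree either.

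I do not anticipate a genuine obstacle here: the heavy lifting was done in Theorem~\ref{thm:template-2}, and the two propositions at the end of the section are black boxes that convert $f$-randomness for some unbounded computable $f$ into autocomplexity and then into DNC degree. The only minor care needed is in the direction translating the ``for all $\Phi$'' formulation into the intrinsic statement about $X$; this is handled by the upward closure of the DNC-degree property under Turing reducibility. Everything else is a direct quotation of Theorem~\ref{thm:template-2}.
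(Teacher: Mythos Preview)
Your proposal is correct and matches the paper's approach: the paper does not give an explicit proof but simply introduces the corollary with ``We can now recast Theorem~\ref{thm:template-2} as follows,'' relying on exactly the two propositions of Higuchi et~al.\ and Kjos-Hanssen--Merkle--Stephan that you invoke. Your handling of the final ``equivalently'' via the identity functional and the upward closure of DNC degree is the natural way to spell out what the paper leaves implicit.
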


By the same argument as the one that immediately follows the statement of Theorem \ref{thm:template-1}, Corollary \ref{cor:template} yields an alternative proof of the fact that 
$\mathbf{d}\vee\mathbf{c}$ is not the top $\LV$-degree.

\section{Applications to $\Pi^0_1$ Classes}\label{sec-conclusion}

As we have seen, V'yugin's construction as laid out in Sections \ref{sec-building} and \ref{sec-implementing} yields significant results in the study of the $\LV$-degrees.  As noted in the introduction, the construction also has some interesting consequences for the study of $\Pi^0_1$ classes, that is, effectively closed subsets of $\cs$.  
In particular, for each of the semi-measures $P$ defined via V'yugin's construction, for~${\sigma\in\str}$, the condition $P(\sigma)=0$ is computable, as $P(\sigma)=0$ if and only if $q(\sigma)$ is set to 0 at stage $|\sigma|$ in the construction of $P$.  This implies that in each case, the support of~$P$ is a $\Pi^0_1$~class.  We thus establish the two corollaries stated in the introduction.

\begin{samepage}
\newtheorem*{thm:cor1}{Corollary \ref{cor1}}
\begin{thm:cor1}{\ }
For every $\delta\in(0,1)$, there is a Turing functional $\Phi$ such that 
\begin{itemize}
	\item[(i)] $\Phi$ maps no set of positive measure to any single sequence,
	
	\item [(ii)] the domain of $\Phi$ has Lebesgue measure at least $1-\delta$,
	\item [(iii)] the range of $\Phi$ is a $\Pi^0_1$ class, and
	\item [(iv)] no sequence in the range of $\Phi$ computes a Martin-L\"of random sequence.
\end{itemize}
\end{thm:cor1}
\end{samepage}
\begin{samepage}
\newtheorem*{thm:cor2}{Corollary \ref{cor2}}
\begin{thm:cor2}{\ }
For every $\delta\in(0,1)$, there is a Turing functional $\Phi$ such that 
\begin{itemize}
	\item[(i)]  $\Phi$ maps no set of positive measure to any single sequence,
	
	\item [(ii)] the domain of $\Phi$ has Lebesgue measure at least $1-\delta$,
	\item [(iii)] the range of $\Phi$ is a $\Pi^0_1$ class, and
	\item [(iv)] no sequence in the range of $\Phi$ is of DNC degree.
\end{itemize}
\end{thm:cor2}
\end{samepage}

\section*{Acknowledgments}

The authors would like to thank Laurent Bienvenu, Frank Stephan, Mushfeq Khan, and Paul Shafer for helpful discussions.  In particular, Bienvenu contributed to the reconstruction of the proof of Theorem~\ref{thm:rand-atom}, Khan informed us of the proof of the negligibility of the collection of non-computable sequences of hyperimmune-free degree in Proposition \ref{prop-negligible}, and Stephan pointed out the relation with immunity notions discussed in Corollary~\ref{biimmunedegree}. The authors would also like to thank the referees for detailed and insightful comments.

\medskip

H\"olzl was supported by a Feodor Lynen postdoctoral research fellowship by the Alexander von Humboldt Foundation
and by the Ministry of Education of Singapore through grant {R146\nobreakdash-000\nobreakdash-184\nobreakdash-112} (MOE2013\nobreakdash-T2\nobreakdash-1\nobreakdash-062). Porter was supported by the National Science Foundation through grant OISE-1159158 as part of the International Research Fellowship Program and by the John Templeton Foundation as part of the project ``Structure and Randomness in the Theory of Computation,'' and by the National Security Agency Mathematical Sciences Program grant H98230-I6-I-D310 as part of the Young Investigator's Program.

Both authors received travel support from the Bayerisch-Franz\"osisches Hochschulzentrum/\allowbreak Centre de Coop\'eration Universitaire
 Franco-Bavarois. In addition, H\"olzl received travel support from the above Templeton grant for a collaborative visit to work with Porter at the University of Florida.

  The opinions expressed in this publication are those of the authors and do not necessarily reflect the views of the John Templeton Foundation.

\bibliographystyle{alpha}
\bibliography{vyuginalgebra}

\end{document}